\newtheorem{thm}{Theorem}[section]
\newtheorem{cor}[thm]{Corollary}
\newtheorem{lem}[thm]{Lemma}
\newtheorem{prop}[thm]{Proposition}
\newtheorem{thmintro}{Theorem}
\newtheorem{conj}[thmintro]{Conjecture}
\newcommand{\N}{\mathbb N}
\newcommand{\Z}{\mathbb Z}
\newcommand{\Q}{\mathbb Q}
\newcommand{\R}{\mathbb R}
\newcommand{\C}{\mathbb C}
\newcommand{\mf}{\mathfrak}
\newcommand{\mc}{\mathcal}
\newcommand{\mb}{\mathbf}
\newcommand{\mh}{\mathbb}
\def\Irr{{\rm Irr}}
\newcommand{\mr}{\mathrm}
\newcommand{\enuma}[1]{\begin{enumerate}[\textup{(}a\textup{)}] {#1} \end{enumerate}}
\newcommand{\Fr}{\mathrm{Frob}}
\newcommand{\Sc}{\mathrm{sc}}
\newcommand{\ad}{\mathrm{ad}}
\newcommand{\cusp}{\mathrm{cusp}}
\newcommand{\nr}{\mathrm{nr}}
\newcommand{\Wr}{\mathrm{wr}}
\newcommand{\Rep}{\mathrm{Rep}}
\newcommand{\af}{\mathrm{aff}}
\def\tor{{\rm tor}}
\newcommand{\unip}{\mathrm{unip}}
\newcommand{\der}{\mathrm{der}}
\newcommand{\matje}[4]{\left(\begin{smallmatrix} #1 & #2 \\ 
#3 & #4 \end{smallmatrix}\right)}
\newcommand{\Mod}{\mathrm{Mod}}
\newcommand{\Hom}{\mathrm{Hom}}
\newcommand{\mi}{\text{\bf -}}
\newcommand{\sfa}{\mathsf{a}}
\newcommand{\sfb}{\mathsf{b}}
\newcommand{\sfq}{\mathsf{q}}
\begin{document}

\title[On unipotent representations of ramified $p$-adic groups]{On unipotent representations \\ 
of ramified $p$-adic groups}
\date{\today}
\thanks{The author was supported by a NWO Vidi grant "A Hecke algebra approach to the 
local Langlands correspondence" (nr. 639.032.528).}
\subjclass[2010]{Primary 22E50; Secondary 11S37, 20G25}
\maketitle

\begin{center}
{\Large Maarten Solleveld} \\[1mm]
IMAPP, Radboud Universiteit \\
Heyendaalseweg 135, 6525AJ Nijmegen, the Netherlands \\
email: m.solleveld@science.ru.nl \\[3mm]
\end{center}

\

\begin{abstract}
Let $G$ be any connected reductive group over a non-archimedean local field. We analyse 
the unipotent representations of $G$, in particular in the cases where $G$ is ramified. We 
establish a local Langlands correspondence for this class of representations, and we show that 
it satisfies all the desiderata of Borel as well as the conjecture of Hiraga, Ichino and Ikeda 
about formal degrees.

This generalizes work of Lusztig and of Feng, Opdam and the author, to reductive groups that 
do not necessarily split over an unramified extension of the ground field.\\[3mm]
\end{abstract}

\tableofcontents

\section*{Introduction}

Let $F$ be a non-archimedean local field and let $\mc G$ be a connected reductive $F$-group.
We consider smooth, complex representations of the group $G = \mc G (F)$. An irreducible 
smooth $G$-representation $\pi$ is called unipotent if there exists a parahoric subgroup
$P_{\mf f} \subset G$ and an irreducible $P_{\mf f}$-representation $\sigma$, which is
inflated from a cuspidal unipotent representation of the finite reductive quotient of 
$P_{\mf f}$, such that $\pi |_{P_{\mf f}}$ contains $\sigma$.

The study of unipotent representations of $p$-adic groups was initiated by Morris 
\cite{Mor1,Mor2} and Lusztig \cite{LusUni1,LusUni2}. In a series of papers 
\cite{FeOp,FOS1,Feng,SolLLC,Opd18,FOS2} Yongqi Feng, Eric Opdam and the author
investigated various aspects of these representations: Hecke algebras, classification,
formal degrees, L-packets. This culminated in a proof of a local Langlands correspondence
for this class of representations.

However, all this was worked out under the assumption that $\mc G$ splits over the maximal 
unramified extension $F_\nr$ of $F$. In that case $\mc G$ already splits over a finite
unramified extension of $F$. In the opposite case, where $\mc G$ does not split over
$F_\nr$, we call $\mc G$ ramified (over $F$).

On the one hand, the assumption that $\mc G$ is $F_\nr$-split is reasonable: unipotent 
$G$-representations come from unipotent representations over the residue field $k_F$, 
and extensions of $k_F$ correspond naturally to unramified extensions of $F$. This enables 
one to regard a cuspidal unipotent representation of $\mc G (F)$ as a member of a family, 
indexed by the finite unramified extensions of $F$. Over a finite field an analogue is 
known from \cite{Lus-Che}.

On the other hand, in examples of ramified simple $F$-groups the unipotent representations
look the same as for $F$-groups that are not ramified, see \cite{Mor2}. 
While general depth zero representations of ramified $F$-groups may very well be more 
intricate, for unipotent representations it is not easy to spot what difficulties could be 
created by ramification of the ground field. In any case, many of the nice properties of 
unipotent representations were already expected to hold for all connected reductive 
$F$-groups. For instance, an enhanced L-parameter should correspond to a unipotent
representation if and only if it is unramified (that is, trivial on the inertia subgroup 
$\mb I_F$ of the Weil group $\mb W_F$). 

Summarising, the restriction to $F_\nr$-split groups in the study of unipotent 
representations seems to be made mainly for technical convenience. In the current paper we 
will prove the main results of \cite{LusUni1,LusUni2,FOS1,SolLLC,FOS2} for ramified simple 
$p$-adic groups, and then generalize them to arbitrary connected reductive $F$-groups.
Before we summarise our main conclusions below, we need to introduce some notations.

We denote the set of irreducible $G$-representations by $\Irr (G)$, and we often add a 
subscript ``unip" for unipotent and subscript ``cusp" to indicate cuspidality. 
Let ${}^L G = G^\vee \rtimes \mb W_F$ be the dual L-group of $G$. To a Langlands parameter
$\phi$ for $G$ we associate a finite group $\mc S_\phi$ as in \cite{Art,AMS1}.
An enhancement of $\phi$ is an irreducible representation $\rho$ of $\mc S_\phi$. We
denote the collection of $G$-relevant enhanced L-parameters (considered modulo 
$G^\vee$-conjugation) by $\Phi_e (G)$. Then $\Phi_{\nr,e}(G)$ denotes the subset of
$\Phi_e (G)$ given by the condition $\phi |_{\mb I_F} = \mr{id}_{\mb I_F}$. 

\begin{thmintro}\label{thm:1}
Let $\mc G$ be a connected reductive group over a non-archimedean local field $F$ and
write $G = \mc G(F)$. There exists a bijection
\[
\begin{array}{ccc}
\Irr_\unip (G) & \longrightarrow & \Phi_{\nr,e}(G) \\
\pi & \mapsto & (\phi_\pi, \rho_\pi) \\
\pi (\phi,\rho) & \text{\rotatebox[origin=c]{180}{$\mapsto$}} & (\phi,\rho)
\end{array} .
\]
We can construct such a bijection for every group $G$ of this kind, in a compatible
way. The resulting family of bijections has the following properties:
\begin{enumerate}[(a)]
\item Compatibility with direct products of reductive $F$-groups.
\item Equivariance with respect to the canonical actions of the group $X_\Wr (G)$ of
weakly unramified characters of $G$.
\item The central character of $\pi$ equals the character of $Z(G)$ determined by $\phi_\pi$.
\item $\pi$ is tempered if and only if $\phi_\pi$ is bounded.
\item $\pi$ is essentially square-integrable if and only if $\phi_\pi$ is discrete.
\item $\pi$ is supercuspidal if and only if $(\phi_\pi, \rho_\pi)$ is cuspidal.
\item The analogous bijections for the Levi subgroups of $G$ and the cuspidal
support maps form a commutative diagram
\[
\begin{array}{ccc}
\Irr_\unip (G) & \longrightarrow & \Phi_{\nr,e}(G) \\
\downarrow & & \downarrow \\
\bigsqcup_M \Irr_{\cusp,\unip}(M) \big/ N_G (M)  & 
\longrightarrow & \bigsqcup_M \Phi_{\nr,\cusp}(M) \big/ N_{G^\vee} ({}^L M) 
\end{array} .
\]
Here $M$ runs over a collection of representatives for the conjugacy classes of Levi subgroups of $G$.
\item Suppose that $P = M U$ is a parabolic subgroup of $G$ and that 
$(\phi^M,\rho^M) \in \Phi_{\nr,e}(M)$ is bounded. Let $\phi$ be $\phi^M$ considered as L-parameter
for $G$. Then the normalized parabolically induced 
representation $I_P^G \pi (\phi^M,\rho^M)$ is a direct sum of representations $\pi (\phi,\rho)$, 
with multiplicities $[\rho^M : \rho]_{\mc S_{\phi}^M}$.
\item Compatibility with the Langlands classification for representations of reductive groups
and the Langlands classification for enhanced L-parameters \cite{SiZi}.
\item Compatibility with restriction of scalars of reductive groups over 
non-archimedean local fields.
\item Let $\eta : \tilde{\mc G} \to \mc G$ be a homomorphism of connected reductive $F$-groups, 
such that the kernel of $\textup{d}\eta : \mr{Lie}(\tilde{\mc G}) \to \mr{Lie}(\mc G)$ is central 
and the cokernel of $\eta$ is a commutative $F$-group. Let ${}^L \eta : {}^L \tilde G \to {}^L G$ 
be a dual homomorphism and let $\phi \in \Phi_\nr (G)$. 

Then the L-packet $\Pi_{{}^L \eta \circ \phi}(\tilde G)$ consists precisely of the constituents 
of the completely reducible $\tilde G$-representations $\eta^* (\pi)$ with $\pi \in \Pi_\phi (G)$.
\item The HII conjecture \cite{HII} holds for tempered unipotent $G$-representations.
\end{enumerate}
Moreover the properties (a), (c), (k) and (l) uniquely determine the surjection 
\[
\Irr_\unip (G) \to \Phi_\nr (G): \pi \mapsto \phi_\pi ,
\]
up to twisting by weakly unramified characters of $G$ that are trivial on $Z(G)$.
\end{thmintro}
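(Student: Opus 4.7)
The plan is to compare any second surjection $\pi \mapsto \phi'_\pi$ satisfying (a), (c), (k), (l) with the one constructed, and to exhibit a weakly unramified character $\chi$ of $G$, trivial on $Z(G)$, such that $\phi'_\pi = \phi_\pi \cdot \phi_\chi$ for every unipotent $\pi$ (here $\phi_\chi \colon \mb W_F \to Z(G^\vee)$ is the unramified cocycle encoding $\chi$). By (a) I reduce to the case of $F$-simple $\mc G$. Applying (k) to a z-extension $\eta \colon \tilde{\mc G} \to \mc G$ with $\tilde{\mc G}_\der$ simply connected reduces the question further to $\tilde G$: every $\pi \in \Irr_\unip(G)$ is a summand of some $\eta^* \tilde \pi$, and a weakly unramified twist of $\tilde \pi$ trivial on $Z(\tilde G)$ descends to a weakly unramified twist of $\pi$ trivial on $Z(G)$.

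The heart of the argument is axiom (l) applied to essentially square-integrable $\pi$. The HII formula equates $\deg(\pi)$ with a rational multiple of $|\gamma(0, \mr{Ad} \circ \phi'_\pi)|$, up to a combinatorial factor depending on $\dim(\rho'_\pi)$ and $|\mc S_{\phi'_\pi}|$. Since $\deg(\pi)$ is intrinsic to $\pi$, this pins down $|\gamma(0, \mr{Ad} \circ \phi'_\pi)|$. For unramified discrete $L$-parameters the adjoint $\gamma$-factor is rigid enough --- by the combinatorics on $G^\vee$ worked out in \cite{FOS1,FOS2} --- that it determines $\phi'_\pi$ up to twisting by the unramified parameter of some weakly unramified character. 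Axiom (c) then fixes the central character of $\phi'_\pi$, forcing the residual twist to be trivial on $Z(G)$. This yields the uniqueness statement for discrete series representations.

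To promote uniqueness from discrete series to the full $\Irr_\unip(G)$, one uses that every unipotent $\pi$ is determined up to isomorphism by its cuspidal support $(M, \sigma)$, with $\sigma$ a unipotent supercuspidal representation of a Levi $M$, and $\phi_\pi$ is built canonically from $\phi_\sigma$. Applying the previous step to each Levi yields a twist $\chi_M$ of the required type. Coherence --- that the various $\chi_M$ arise as the restriction of a single $\chi$ on $G$ --- should follow by invoking (k) for the central isogenies $\tilde M_\der \times Z(M)^\circ \to M$ (with $\tilde M_\der$ the simply connected cover of $M_\der$), combined with (a) for the simple factors of $\tilde M_\der$ and (c) for the central torus. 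This last coherence step is the main technical obstacle, since axioms (g) and (h) --- compatibility with cuspidal support and parabolic induction --- are not among our hypotheses; I expect to dispatch it by combining the explicit Bernstein description of $\Irr_\unip(G)$ with the matching description of the fibres of the cuspidal support map on $\Phi_\nr(G)$.
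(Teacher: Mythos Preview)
Your proposal addresses only the final uniqueness assertion, not the existence of the bijection or the verification of properties (a)--(l). The existence is the bulk of the paper: for ramified simple $\mc G$ one introduces the $F_\nr$-split companion group $\mc G'$, establishes Hecke algebra isomorphisms $\mc H(G,\hat P_{\mf f},\hat\sigma)\cong\mc H(G',\hat P_{\mf f'},\hat\sigma')$, compares enhanced L-parameters via a canonical isomorphism $G'^\vee\cong (G^\vee)^{\mb I_F}$, and then bootstraps to general reductive groups through the supercuspidal case. Your proposal takes all of this as given.

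For the uniqueness argument itself, your outline is close in spirit to the paper's but contains a genuine gap at exactly the point you flag. You invoke (l) only for formal degrees of essentially square-integrable $\pi$, whereas Conjecture~\ref{conj:HII} concerns Plancherel densities for \emph{all} tempered representations $I_P^G(\pi)$ with $\pi$ square-integrable modulo centre on a Levi $M$. This stronger input is what the paper actually exploits: through the affine Hecke algebra description of each Bernstein block and Opdam's spectral transfer morphisms \cite{Opd2,Opd18}, the Plancherel density formula pins down a ``Langlands parametrization'' of \emph{all} irreducible modules of the Hecke algebra, and \cite[Theorem~2.1]{FOS2} verifies that this agrees with the constructed LLC (see Proposition~\ref{prop:6.3}). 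Your proposed route through cuspidal support cannot work as written: the assertion that ``$\phi_\pi$ is built canonically from $\phi_\sigma$'' presupposes exactly the compatibility (g) that is not among the hypotheses, and in any case distinct representations in a single Bernstein component carry distinct L-parameters, so knowing $\phi_\sigma$ alone is insufficient. The coherence of the twists $\chi_M$ across Levi subgroups is precisely what the Hecke-algebraic/Plancherel argument provides without further input. Your reduction via a $z$-extension, rather than to $G_\ad$ via $\sfq$ as in the paper, is a reasonable variant but does not close this gap.
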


We regard Theorem \ref{thm:1} as a local Langlands correspondence (LLC) for unipotent 
representations. We note that parts (b), (c), (d), (e) and (k) are precisely the desiderata
formulated by Borel \cite[\S 10]{Bor}. For the unexplained notions in the other parts we refer 
to \cite{SolLLC}. 

Let us phrase part (l) about Plancherel densities more precisely. We fix an additive 
character $\psi : F \to \C^\times$ of level zero (by \cite{HII} that can be done without
loss of generality). As in \cite{HII} that gives rise to a Haar measure $\mu_{G,\psi}$ on $G$, 
which we however normalize as in \cite[(A.25)]{FOS1}. 

Let $\mc P = \mc M \mc U$ be a parabolic $F$-subgroup of $\mc G$,
with Levi factor $\mc M$ and unipotent radical $\mc U$. Let $\pi \in \Irr (M)$
be square-integrable modulo centre and let $X_{\mr{unr}} (M)$ be the group of unitary
unramified characters of $M$. Let $\mc O = X_{\mr{unr}} (M) \pi \subset \Irr (M)$
be the orbit in $\Irr (M)$ of $\pi$, under twists by $X_{\mr{unr}} (M)$. We define
a Haar measure d$\mc O$ on $\mc O$ as in 
\cite[p. 239 and 302]{Wal}. This also provides a Haar measure on the family
of (finite length) $G$-representations $I_P^G (\pi')$ with $\pi' \in \mc O$.

Let $Z(\mc G)_s$ be the maximal $F$-split central torus of $\mc G$, with dual group
$Z(G^\vee)^{\mb W_F,\circ}$. We denote the adjoint representation of ${}^L M$ on 
Lie$\big( G^\vee) / \mr{Lie}(Z(M^\vee)^{\mb W_K} \big)$ by $\mr{Ad}_{G^\vee,M^\vee}$. 
We compute $\gamma$-factors with respect to the 
Haar measure on $F$ that gives the ring of integers $\mf o_F$ volume 1.

\begin{conj}\label{conj:HII}
\textup{\cite[\S 1.5]{HII}} \\
Suppose that the enhanced L-parameter of $\pi$ is $(\phi_\pi,\rho_\pi) \in \Phi_e (M)$. \\
Then the Plancherel density at $I_P^G (\pi) \in \mr{Rep}(G)$ is
\[
c_M \dim (\rho_\pi) \, \big| \pi_0 \big( Z_{(G / Z(G)_s)^\vee} (\phi_\pi) \big) \big|^{-1} | 
\gamma (0,\mr{Ad}_{G^\vee,M^\vee} \circ \phi_\pi,\psi)| \, \textup{d}\mc O (\pi) ,
\]
for some constant $c_M \in \R_{>0}$ independent of $F$ and $\mc O$.

Moreover, with the above normalizations of Haar measures $c_M$ equals 1.
\end{conj}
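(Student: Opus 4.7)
The plan is to establish Conjecture \ref{conj:HII} for tempered unipotent $G$-representations in two stages: first reduce to essentially square-integrable (modulo centre) representations by parabolic induction, and then match both sides of the formula via the affine Hecke algebra description of unipotent Bernstein blocks.

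For the reduction step, Theorem \ref{thm:1}(h) shows that if $\phi_\pi$ is bounded but not discrete, then $I_P^G \pi(\phi,\rho^M)$ is already parabolically induced from an essentially square-integrable unipotent representation of a strictly smaller Levi $M'$. Both sides of the conjectural formula behave well under this further induction: on the analytic side one has the standard factorization of $\textup{d}\mc O$ through Harish-Chandra's $\mu$-function, while on the Galois side $\mr{Ad}_{G^\vee,M^\vee} \circ \phi$ decomposes orthogonally so that its $\gamma$-factor splits as the $\gamma$-factor of the smaller Levi times a factor that matches Harish-Chandra's $\mu$-function via Opdam's rank-one formulas. This reduces matters to essentially square-integrable unipotent $\pi$, where the Plancherel density is just the formal degree.

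For such a $\pi$, the Bernstein block is Morita equivalent, by \cite{LusUni1,LusUni2,FOS1} extended to ramified groups in the present paper, to the module category of an (extended) affine Hecke algebra $\mc H$ with unequal parameters of the form $\sfq^{\sfa(\alpha)}$, where $\sfq$ is the residue cardinality. Essentially square-integrable representations correspond to discrete series $\mc H$-modules, whose formal degrees are given by Opdam's explicit residue formula in the parameters $\sfq^{\sfa(\alpha)}$. Using Theorem \ref{thm:1} one identifies these parameters with the eigenvalues of $\phi_\pi(\mr{Frob})$ on the root spaces of $\mr{Lie}(G^\vee)/\mr{Lie}(Z(M^\vee)^{\mb W_F})$, so that Opdam's formula becomes exactly the absolute value of the adjoint $\gamma$-factor up to the structural prefactors $\dim(\rho_\pi)$ and $|Z_{(G/Z(G)_s)^\vee}(\phi_\pi)|^{-1}$. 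The normalization $c_M = 1$ then follows from the specific choice of Haar measure in \cite[(A.23)]{FOS1}, exactly as in \cite{FOS2}.

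The main obstacle lies in the ramified simple case. For groups that become unramified after restriction of scalars the conjecture follows from \cite{FOS2} via Theorem \ref{thm:1}(j). The genuinely new cases are the absolutely simple ramified groups, where one must verify case by case that the Hecke parameters $\sfa(\alpha)$ produced by the Morita equivalence of \cite{Mor2,LusUni2} agree with the Frobenius action on the corresponding dual root spaces. This requires combining the explicit classification of cuspidal unipotent enhanced parameters $(\phi,\rho)$ with Lusztig's tables of Hecke parameters and a direct computation of the $\mb W_F$-action on $G^\vee$ for each ramified simple type; once this type-by-type matching is in place, the abstract argument above goes through uniformly.
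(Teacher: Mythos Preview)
Your overall architecture --- reduce to square-integrable via parabolic induction and Harish-Chandra's $\mu$-function, then compute formal degrees via Opdam's residue formula for the affine Hecke algebra and match with the adjoint $\gamma$-factor --- is the same as that of \cite{FOS2}, and the paper indeed proceeds by generalising \cite{FOS2}. But your treatment of the genuinely ramified simple case differs from the paper's in a way that hides a real gap.

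The paper does \emph{not} carry out a fresh case-by-case matching of Hecke parameters with Frobenius eigenvalues for ramified simple types. Instead it transfers the whole statement to the $F_\nr$-split companion group $\mc G'$: Theorem~\ref{thm:3.1} gives an isomorphism $\mc H(G,\hat P_{\mf f},\hat\sigma)\cong\mc H(G',\hat P_{\mf f'},\hat\sigma')$, Lemma~\ref{lem:3.5} shows formal degrees change by a factor $q_F^{\mb a(\mf g^\vee)/2}$, and Lemma~\ref{lem:4.5} (via \eqref{eq:4.15}) shows adjoint $\gamma$-factors change by the \emph{same} factor $q_F^{\mb a(\mf g^\vee)/2}$. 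Since HII is already known for $G'$ by \cite{FOS2}, it follows for $G$ with no new type-by-type work.

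The gap in your plan is precisely this Artin-conductor contribution. You identify the Hecke parameters with eigenvalues of $\phi_\pi(\Fr)$ on root spaces of $\mr{Lie}(G^\vee)$, but for an unramified $\phi$ the image of $\phi$ lies in $(G^\vee)^{\mb I_F}\rtimes\mb W_F$, and the Hecke algebra only sees $\mr{Lie}(G^\vee)^{\mb I_F}$. The adjoint $\gamma$-factor, however, is computed on \emph{all} of $\mr{Lie}(G^\vee)$: the ramified summand $\mr{Lie}(G^\vee)^{ram}$ has trivial $L$-function but nontrivial $\epsilon$-factor, contributing exactly $\varepsilon\, q_F^{\mb a(\mf g^\vee)/2}$ (Lemma~\ref{lem:4.5}). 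This must be matched against the factor $q_F^{-\mb a(\mf g^\vee)/2}$ built into the Haar measure normalisation \eqref{eq:2.11}. Your appeal to ``exactly as in \cite{FOS2}'' for $c_M=1$ does not cover this, because in \cite{FOS2} the groups are $F_\nr$-split and $\mb a(\mf g^\vee)=0$. Without isolating and controlling this ramified contribution, your case-by-case matching of Hecke parameters with dual root spaces cannot close the argument.
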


It is also interesting to consider Theorem \ref{thm:1} for all inner twists of a given
quasi-split group simultaneously. That is done best with the rigid inner twists from
\cite{Kal1,Dil}. In that setting we replace $\mc S_\phi$ by a slightly different component
group $\mc S_\phi^+$ and we write
\[
\Phi^+ ({}^L G) = \big\{ (\phi,\rho^+) : \phi \in \Phi (G), \rho^+ \in \Irr (\mc S_\phi^+) \big\}.
\]
For a rigid inner twist $(\mc G^z,z)$ of $\mc G$, we also replace $\Phi_e (G^z)$ by a 
slightly different set $\Phi^+ (G^z,z)$ of relevant enhanced L-parameters. The (disjoint) union
of the sets $\Phi^+ (G^z,z)$, over all $z$ in a set $H^1 (\mc E,Z(\mc G_\der) \to \mc G)$
parametrizing the equivalence classes of rigid inner twists of $\mc G$, is precisely 
$\Phi^+ ({}^L G)$.

We check (in Section \ref{sec:rigid}) that the new setup is essentially equivalent to the setup 
used so far, with the bonus that it is a bit more canonical. It follows that Theorem \ref{thm:1} 
is also valid in terms of rigid inner twists and the associated enhancements of L-parameters.

\begin{thmintro}\label{thm:3}
\textup{(see Theorem \ref{thm:7.3})} \\
The union of the instances of Theorem \ref{thm:1} for all rigid inner twists of a quasi-split
connected reductive $F$-group $\mc G$ gives a bijection
\[
\Phi_\nr^+ ({}^L G) \longrightarrow 
\bigsqcup\nolimits_{z \in H^1 (\mc E,Z(\mc G_\der) \to \mc G)} \Irr_\unip (G^z) .
\]
\end{thmintro}

It is believed (or hoped) that in the local Langlands program enhanced L-pa\-ra\-me\-ters are
in bijection with the irreducible representations of all rigid inner twists of a given reductive
$p$-adic group. Theorem \ref{thm:3} beautifully confirms this for unramified L-parameters
and unipotent representations.\\

Let us explain our strategy to prove Theorem \ref{thm:1}. The papers 
\cite{FeOp,FOS1,Opd18,FOS2} all use reduction to the case of simple (adjoint) $F$-groups,
so that is where we start. Like in \cite{Mor1,Mor2,LusUni1,LusUni2} we want to analyse
the parahoric subgroups $P_{\mf f}$ of $G$, their (cuspidal) unipotent representations
$\sigma$ and the Hecke algebras determined by a type of the form $(P_{\mf f}, \sigma)$.
The main trick stems from a remark of Lusztig \cite[\S 10.13]{LusUni2}: for every ramified
simple $F$-group $\mc G$ there exists a $F_\nr$-split simple ``companion group"
$\mc G'$. which has the same local index and the same relative local Dynkin diagram as
$\mc G$ (up to the direction of some arrows in these diagrams). That determines $\mc G'$
up to isogeny, and we fix it by requiring that $(Z(G'^\vee)^{\mb I_F})_\Fr \cong 
(Z(G^\vee)^{\mb I_F})_\Fr$. We will construct a LLC for $\Irr (G)_\unip$ via
$\Irr (G')_\unip$ (for which it is known already).

In Section \ref{sec:list} we provide an overview of all possible $\mc G$ and $\mc G'$.
It turns out that, although $\mc G'$ is connected when $\mc G$ is adjoint, sometimes
$\mc G' = \mc G^{'\circ} \times \{\pm 1\}$. 

This setup provides a bijection between the $G$-orbits of facets in the Bruhat--Tits
building of $\mc G(F)$ and the analogous set for $G' = \mc G' (F)$, say $\mf f \mapsto \mf f'$.
We call a representation of the parahoric subgroup $P_{\mf f}$ unipotent (resp. cuspidal)
if it arises by inflation from a unipotent (resp. cuspidal) representation of the finite
reductive quotient of $P_{\mf f}$. We show in Theorem \ref{thm:2.3} that the relation
between the ramified simple $F$-group $\mc G$ and its companion group $\mc G'$ gives rise
to a bijection
\begin{equation}\label{eq:1} 
\Irr (P_{\mf f})_\unip \longleftrightarrow \Irr (P_{\mf f'})_\unip : \sigma \mapsto \sigma' .
\end{equation}
Notice that this actually is a statement about finite reductive groups. Let $\hat P_{\mf f}$ 
be the pointwise stabilizer of $\mf f$ in $G$. Then \eqref{eq:1} can be extended to a bijection
\begin{equation}\label{eq:2}
\Irr (\hat P_{\mf f})_\unip \longleftrightarrow \Irr (\hat P_{\mf f'}) : 
\hat \sigma \mapsto \hat \sigma' .
\end{equation}
For cuspidal representations \eqref{eq:2} induces to a bijection
\begin{equation}\label{eq:3}
\Irr (G)_{\cusp,\unip} \longleftrightarrow \Irr (G')_{\cusp,\unip}
\end{equation}
which is almost canonical (Corollary \ref{cor:2.4}).

In Section \ref{sec:Hecke} we compare the non-cuspidal unipotent representations of
$G$ and $G'$. Let $\hat \sigma \in \Irr (\hat P_{\mf f})_{\unip,\cusp}$, so that 
$(\hat P_{\mf f}, \hat \sigma)$ is a type for a Bernstein component of unipotent
$G$-representations \cite{Mor3}. The Bernstein block Rep$(G)_{(\hat P_{\mf f},\hat \sigma)}$
is equivalent with the module category of the Hecke algebra $\mc H (G,\hat P_{\mf f},
\hat \sigma)$. We prove in Theorem \ref{thm:3.1} that \eqref{eq:2} canonically induces
an algebra isomorphism
\begin{equation}\label{eq:4}
\mc H (G',\hat P_{\mf f'}, \hat \sigma') \to \mc H (G,\hat P_{\mf f},\hat \sigma) .
\end{equation}
These Hecke algebras are essential for everything in the non-cuspidal cases.
By \cite[\S 1]{LusUni1} there are equivalences of categories
\begin{equation}\label{eq:5}
\Rep (G)_\unip = \prod_{\{ (\hat P_{\mf f}, \hat \sigma) \} / G\text{-conjugation}}
\hspace{-11mm} \Rep (G)_{(\hat P_{\mf f}, \hat \sigma)} \cong
\prod_{\{ (\hat P_{\mf f}, \hat \sigma) \} / G\text{-conjugation}} 
\hspace{-11mm} \mr{Mod} \big( \mc H (G,\hat P_{\mf f}, \hat \sigma) \big) .
\end{equation}
Combining that with \eqref{eq:4} for all possible $(\hat P_{\mf f}, \hat \sigma)$
yields an equivalence of categories
\begin{equation}\label{eq:6}
\Rep (G)_\unip \longrightarrow \Rep (G')_\unip .
\end{equation}
Although \eqref{eq:6} is not entirely canonical, we do show that it preserves several
properties of representations. 

Now that the situation for unipotent representations of simple $F$-groups is under control,
we turn to the complex dual groups and L-parameters for $G$ and $G'$. The most 
important observation (checked case-by-case with the list from Section \ref{sec:list})
is Lemma \ref{lem:4.1}: there exists a canonical isomorphism 
$G^{'\vee} \to (G^\vee)^{\mb I_F}$. This induces a canonical bijection
\begin{equation}\label{eq:7}
\Phi_{\nr,e}(G') \longrightarrow \Phi_{\nr,e}(G) ,
\end{equation}
which preserves relevant properties of enhanced L-parameters 
(Proposition \ref{prop:4.4} and Lemma \ref{lem:4.7}). From \eqref{eq:6}, \eqref{eq:7}
and Theorem \ref{thm:1} for $G'$ \cite{SolLLC,FOS2} we deduce Theorem \ref{thm:1}
for ramified simple groups. More precisely, we establish some properties of the bijection
\begin{equation}\label{eq:8}
\Irr (G)_\unip \longrightarrow \Phi_{\nr,e}(G) ,
\end{equation}
not yet all. In particular equations \eqref{eq:4}--\eqref{eq:8} mean that the main
results of \cite{LusUni1,LusUni2} are now available for all simple $F$-groups.\\

With the case of simple $F$-groups settled, we embark on the study of supercuspidal
unipotent representations of connected reductive $F$-groups (Section \ref{sec:supercusp}).
For a $F_\nr$-split group $\mc G$, Theorem \ref{thm:1} was proven for 
$\Irr (G)_{\cusp,\unip}$ in \cite{FOS1} (again with most but not yet all properties).
We aim to generalize the arguments from \cite{FOS1} to possibly ramified connected
reductive $F$-groups. It is only at this stage that the differences caused by ramification
of field extensions force substantial modifications of previous strategies. 

Assume for the moment that the centre of $\mc G$ is $F$-anisotropic. When $\mc G$ is in
addition $F_\nr$-split, the derived group $G_\der$ has the same supercuspidal unipotent 
representations as $G$ \cite[\S 15]{FOS1}. That is not true for ramified $F$-groups. 
Related to that $(G^\vee)^{\mb I_F}$ need not be connected.
Let $\sfq : \mc G \to \mc G_\ad$ be the quotient map to the adjoint group and let
$\sfq^\vee : {G_\ad}^\vee \to G^\vee$ be the dual homomorphism. The set 
$\sfq^\vee (\Phi_\nr (G_\ad))$ constists precisely of the $\phi \in \Phi_\nr (G)$ with
$\Phi (\Fr) \in G^{\vee,\mb I_F,\circ}$. 

Similarly, the natural map $X_\Wr (G_\ad) \to X_\Wr (G)$ need not be surjective for 
ramified groups. We denote its image by $X_\Wr (G_\ad,G)$. In Lemmas \ref{lem:5.3}
and \ref{lem:5.4} we show that there are natural bijections
\begin{equation}\label{eq:9}
\begin{aligned}
& X_\Wr (G) \underset{X_\Wr (G_\ad,G)}{\times} \Irr (G / Z(G))_\unip \longrightarrow
\Irr (G)_\unip , \\
& X_\Wr (G) \underset{X_\Wr (G_\ad,G)}{\times} \sfq^\vee (\Phi_\nr (G_\ad))
\longrightarrow \Phi_\nr (G). 
\end{aligned}
\end{equation}
Using \eqref{eq:9} and the case of adjoint groups, the proof of Theorem \ref{thm:1}
for supercuspidal unipotent representations of $F_\nr$-split semisimple $F$-groups 
in \cite{FOS1} generalizes readily to connected reductive $F$-groups with anisotropic 
centre. The step from there to arbitrary connected reductive $F$-groups is easy anyway. 
This completes the proof of Theorem \ref{thm:1} for supercuspidal unipotent 
representations, except for the properties (d), (e), (g), (h), (i) and (k).

In Section \ref{sec:LLC} we set out to generalize the local Langlands correspondence
for $\Irr (G)_\unip$ in \cite{SolLLC} from $F_\nr$-split to arbitrary connected
reductive $F$-groups. With the above results on the adjoint and the cuspidal cases,
that is straightforward. The arguments from \cite{SolLLC} yield Theorem \ref{thm:1}
for $\Irr (G)_\unip$, except for the properties (k) and (l).

Property (k), about the behaviour of unipotent representations upon pullback along
certain homomorphisms of reductive groups, is an instance of the main results of
\cite{SolFunct}. We only have to verify that the $F_\nr$-split assumption made in
\cite[\S 7]{SolFunct} can be lifted. That requires a few remarks about the small
modifications in the ramified case. We formulate a more precise version of 
property (k) in Theorem \ref{thm:6.2}.

Finally we deal with the essential uniqueness of our LLC and with property (l), the 
HII conjecture \ref{conj:HII}. For $F_\nr$-split groups the latter is the main result 
of \cite{FOS2}. We check that the arguments from
\cite{FOS2} can be generalized to possibly ramified connected reductive $F$-groups.\\[2mm]

\textbf{Acknowledgment.}
We express our gratitude to the referee for the helpful comments and the detailed report.

\section{List of ramified simple groups}
\label{sec:list}

Let $F$ be a non-archimedean local field with ring of integers $\mf o_F$ and
a uniformizer $\varpi_F$. Let $k_F = \mf o_F / \varpi_F \mf o_F$ be its residue
field, of cardinality $q_F$. We fix a separable closure $F_s$ and assume that all
separable extensions of $F$ are realized in $F_s$. Let $F_\nr$ be the maximal 
unramified extension of $F$. Let $\mb W_F \subset \mr{Gal}(F_s / F)$ be the Weil
group of $F$ and let $\Fr$ be a geometric Frobenius element. 
Let $\mb I_F = \mr{Gal}(F_s / F_\nr) \subset \mb W_F$ be the inertia subgroup, 
so that $\mb W_F / \mb I_F \cong \Z$ is generated by $\Fr$.

Let $\mc G$ be a connected reductive $F$-group and pick a maximal $F$-split torus  
$\mc S$ in $\mc G$. Let $\mc T_\nr$ be a maximal $F_\nr$-split torus in 
$Z_{\mc G}(\mc S)$ defined over $F$ -- such a torus exists by
\cite[\S 1.10]{Tit2}. Then $\mc T := Z_{\mc G}(\mc T_\nr)$ is a maximal torus
of $\mc G$, defined over $F$ and containing $\mc T_\nr$ and $\mc S$.
Let $\Phi (\mc G, \mc T)$ be the associated root system. We also fix a Borel 
subgroup $\mc B$ of $\mc G$ containing $\mc T$ and defined over $\mc F_\nr$, 
which determines bases $\Delta_{\mc T}$ of $\Phi (\mc G, \mc T)$, $\Delta_\nr$ of
$\Phi (\mc G,\mc T_\nr)$ and $\Delta$ of $\Phi (\mc G, \mc S)$.

We call $G = \mc G (F)$:
\begin{itemize}
\item \emph{unramified} if $\mc G$ is quasi-split and splits over $F_\nr$;
\item \emph{ramified} if $\mc G$ does not split over $F_\nr$.
\end{itemize}
Unfortunately this common terminology does not exhaust the possibilities: some 
$F_\nr$-split groups are neither ramified nor unramified. Earlier work on unipotent 
representations of reductive $p$-adic groups applied to groups that are not ramified.
In this section we present the list of simple ramified $F$-groups of adjoint type,
obtained from \cite{Tit1,Tit2}. For each such group we provide some useful data, which 
we describe next. We follow the conventions and terminology from \cite{Tit2,SolLLC}.

The Bruhat--Tits building $\mc B (\mc G,F)$ has an apartment $\mh A_{\mc S} =
X_* (\mc S) \otimes_\Z \R$ associated to $\mc S$. The walls of $\mh A_S$ determine
an affine root system $\Sigma$, which naturally projects onto the finite root system
$\Phi (\mc G, \mc S)$. Similarly the Bruhat--Tits building $\mc B (\mc G,F_\nr)$ has 
an apartment $\mh A_\nr = X_* (\mc T_\nr) \otimes_\Z \R$ associated to $\mc T_\nr$. 
The walls of $\mh A_\nr$ determine an affine root system $\Sigma_\nr$, which naturally 
projects onto $\Phi (\mc G, \mc T_\nr)$. We recall from \cite[2.6.1]{Tit2} that
\begin{equation}\label{eq:1.1}
\mc B (\mc G,F) =  \mc B (\mc G,F_\nr)^{\mathrm{Gal}(F_\nr / F)} = 
\mc B (\mc G,F_\nr)^\Fr \quad \text{and} \quad \mh A_{\mc S} = \mh A_\nr^\Fr . 
\end{equation}
Let $C_\nr$ be a $\Fr$-stable chamber in $\mh A_\nr$ whose closure contains 0 and 
which (as far as possible) lies in the positive Weyl chamber determined by $\mc B$. 
The walls of $C_\nr$ provide a basis $\Delta_{\nr,\af}$ of $\Sigma_\nr$, which 
naturally surjects to $\Delta_\nr$. The group $\mr{Gal}(F_s / F_\nr)$ acts naturally
on $C_\nr$ and hence on $\Delta_{\nr,\af}$. The Dynkin diagram of 
$(\Sigma_\nr,\Delta_{\nr,\af})$,
together with the action of $\Fr$, is called the \emph{local index} of $\mc G (F)$.

By \eqref{eq:1.1} there exists a unique chamber $C_0$ in $\mh A_{\mc S}$ containing
$C_\nr \cap \mh A_{\mc S}$. The walls of $C_0$ yield a basis $\Delta_\af$ of 
$\Sigma$ which projects onto $\Delta$. By construction $\Delta_\af$ consists of the
restrictions of $\Delta_{\nr,\af}$ to $\mh A_{\mc S}$. As $\mc G$ is simple, $|\Delta_\af| 
= |\Delta| + 1$ and $|\Delta_{\nr,\af}| = |\Delta_\nr| + 1$. The relative local
Dynkin diagram of $\mc G (F)$ is defined as the Dynkin diagram of $(\Sigma,\Delta_\af)$.

We will also need a group called $\Omega$ or $\Omega_G$, which can be described 
in several equivalent ways \cite[Appendix]{PaRa}:
\begin{itemize}
\item $\Irr \big( (Z(G^\vee)^{\mb I_F})_\Fr \big)$, where $G^\vee$ is the complex
dual group of $G$;
\item $G$ modulo the kernel of the Kottwitz homomorphism 
$G \to \Irr \big( (Z(G^\vee)^{\mb I_F})_\Fr \big)$;
\item $G$ modulo the subgroup generated by all parahoric subgroups of $G$;
\item the stabilizer of $C_0$ in the group $N_G (S) / (Z_G (S) \cap P_{C_0})$, where 
$P_{C_0} \subset G$ denotes the Iwahori subgroup associated to $C_0$;
\item $\big( \big( X_* (\mc T) / \Z \Phi^\vee (\mc G,\mc T) \big)_{\mb I_F} \big)^\Fr$.
\end{itemize}
The group $\Omega_G$ acts naturally on the relative local Dynkin diagram of $\mc G (F)$.
We say that a character of $G$ is weakly unramified if it is trivial on every 
parahoric subgroup of $G$. By the above, the group $X_\Wr (G)$ of all such characters
is naturally isomorphic with $\Irr (\Omega_G)$ and with $(Z(G^\vee)^{\mb I_F})_\Fr$.

We say that $\mc G$ is simple if it is simple as $F_s$-group. If it is merely simple
as $F$-group, we call it $F$-simple.
For every ramified simple $F$-group $\mc G$ we give a $F_\nr$-split
``companion" $F$-group $\mc G'$. It is determined by the following requirements:
\begin{itemize}
\item There exists a $\Fr$-equivariant bijection between $\Delta_{\nr,\af}$ for
$\mc G$ and $\mc G'$, which preserves the number of bonds in the Dynkin diagram(s) 
of $(\Sigma_\nr, \Delta_{\nr,\af})$. Thus the local index of $\mc G'$ is the same
as that of $\mc G$, except that the directions of some arrows may differ.
In particular this gives a bijection from the relative local Dynkin diagram for
$\mc G' (F)$ to that for $\mc G (F)$.
\item There is an isomorphism $\Omega_{G'} \cong \Omega_G$, which renders the 
bijection between the relative local Dynkin diagrams $\Omega_G$-equivariant.
\end{itemize}
We specify a bijection between $\Delta_{\nr,\af}$ for $\mc G$ and $\mc G'$
by marking one special vertex on both sides. In most cases 0 is a special vertex,
then we pick that one. This also determines one marked vertex of $\Delta_\af$
(and one of $\Delta'_\af$). The remainder of the relative local Dynkin diagram 
$\Delta_\af$ is canonically in bijection with $\Delta$, so the bijection
$\Delta_\af \longleftrightarrow \Delta'_\af$ induces a bijection 
$\Delta \longleftrightarrow \Delta'$.

These relations between the groups $\mc G$ and $\mc G'$ lead to many similarities.
For instance, their parabolic $F$-subgroups can be compared. Namely,
it is well-known that the $G$-conjugacy classes of parabolic $F$-subgroups of $\mc G$ 
are naturally in bijection with the power set of $\Delta$ \cite[Theorem 15.4.6]{Spr}. 
The same holds for $\mc G'$. Hence the above bijection $\Delta \longleftrightarrow \Delta'$
induces a bijection from the set of conjugacy classes
of parabolic $F$-subgroups of $\mc G$ to the analogous set for $\mc G'$. Furthermore
every conjugacy class of parabolic subgroups of $\mc G$ contains a unique standard 
parabolic subgroup (with respect to $\mc B$). We will denote the resulting bijection 
between standard parabolic $F$-subgroups by $\mc P \mapsto \mc P'$.\\

Apart from the above data, the group $\mc G$ depends on the choice of a suitable
field extension of $F$. Below $F^{(2)}$ is the unique unramified quadratic
extension of $F$ and $E$ (resp. $E^{(2)}$) denotes a ramified separable quadratic 
extension of $F$ (resp. of $F^{(2)}$). 

We use the names for the local indices from \cite{Tit1,Tit2}. For each name, we start
with the adjoint group $\mc G$ of that type, and its companion group. After that, we
list the groups isogenous to $\mc G$. These have the same local index and relative
local Dynkin diagram as $\mc G$ and their companion groups are isogenous to $\mc G'$, 
but they have a smaller group $\Omega_G$. 

\subsection{$\mathbf{B\mi C_n}$} \
\label{par:B-Cn}

$\mc G = PU_{2n}$, quasi-split over $F$, split over $E$

Local index and relative local Dynkin diagram: \
\raisebox{-5mm}{\includegraphics[width=4cm]{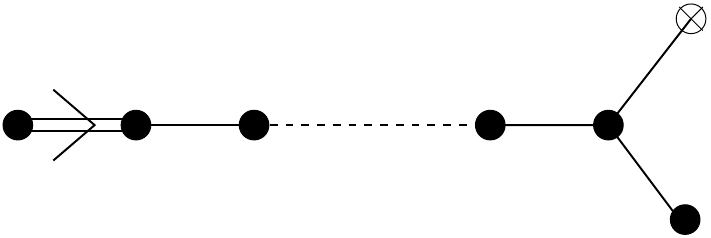}}

Trivial $\Fr$-action

$\Omega_G$ has two elements, it exchanges the two legs on the right hand side\\

$\mc G' = SO_{2n+1}$, $F$-split

Local index and relative local Dynkin diagram: \
\raisebox{-5mm}{\includegraphics[width=4cm]{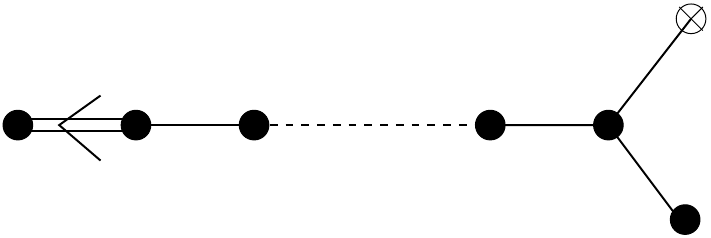}}

Groups isogenous to $PU_{2n}$ fit in a sequence $SU_{2n} \to \mc G \to PU_{2n}$.

Such a group is determined by the order of its schematic centre, call that $d$.
\[
\mc G' = \left\{\begin{array}{ll}
SO_{2n+1} & \text{if } d \text{ is odd} \\
Spin_{2n+1} \times \{\pm 1\} & \text{if } d \text{ is even and } 2n/d \text{ is even} \\
Spin_{2n+1} & \text{if } d \text{ is even and } 2n/d \text{ is odd} 
\end{array}
\right.
\]

In the first two cases $\Omega_G$ has order two and it acts on the diagram as for $PU_{2n}$, 

in the second case $|\Omega_G| = 2$ and it acts trivially on the diagram, while in the 

third case $|\Omega_G| = 1$.

\subsection{$\mathbf{C\mi BC_n}$} \
\label{par:C-BCn}

$\mc G = PU_{2n+1}$, quasi-split over $F$, split over $E$

Local index and relative local Dynkin diagram: \
\raisebox{-1mm}{\includegraphics[width=4cm]{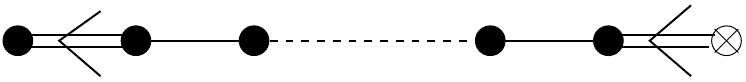}}

Trivial $\Fr$-action

$\Omega_G$ has one element\\

$\mc G' = Sp_{2n}$, $F$-split

Local index and relative local Dynkin diagram: \
\raisebox{-1mm}{\includegraphics[width=4cm]{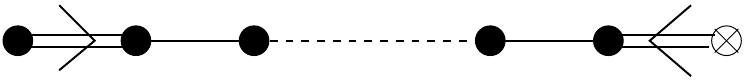}}\\

Groups isogenous to $PU_{2n+1}$ fit in a sequence 
$SU_{2n+1} \to \mc G \to PU_{2n+1}$. 

Then $|\Omega_G| = 1$ and $\mc G' = Sp_{2n}$.

\subsection{$\mathbf{C\mi B_n}$} \
\label{par:C-Bn}

$\mc G = PSO^*_{2n+2}$, quasi-split over $F$, split over $E$

Local index and relative local Dynkin diagram: \
\raisebox{-1mm}{\includegraphics[width=4cm]{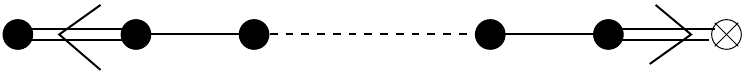}}

Trivial $\Fr$-action

$\Omega_G$ has two elements, it reflects in the middle of the diagram\\

$\mc G' = PSp_{2n}$, $F$-split

Local index and relative local Dynkin diagram: \
\raisebox{-1mm}{\includegraphics[width=4cm]{Cn.pdf}}\\

Isogenous group $\mc G = Spin_{2n + 2}^*$: $\Omega_G = 1$ and $\mc G' = Sp_{2n}$.

The isogenous group $\mc G = SO_{2n+2}^*$ has $\Omega_G$ of order 2, but acting trivially 

on the diagram. We take $\mc G' = Sp_{2n} \times \{\pm 1\}$.

\subsection{$\mathbf{ {}^2 B\mi C_n}$} \
\label{par:2B-Cn}

$\mc G = PU_{2n}$, not quasi-split over $F$, quasi-split over $F^{(2)}$, split over $E^{(2)}$

Local index: \
\raisebox{-5mm}{\includegraphics[width=4cm]{BCn.pdf}}

$\Fr$ exchanges the two legs on the right hand side

Relative local Dynkin diagram: 
\raisebox{-1mm}{\includegraphics[width=4cm]{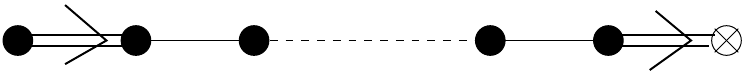}}

$\Omega_G$ has two elements, it acts trivially on the diagram\\

$\mc G' = SO_{2n+1}$, not split over $F$, split over $F^{(2)}$

Local index: \
\raisebox{-5mm}{\includegraphics[width=4cm]{Bn.pdf}} 

Relative local Dynkin diagram: 
\raisebox{-1mm}{\includegraphics[width=4cm]{CBn.pdf}}\\

For isogenous groups $\mc G$, the situation is as for $\mathbf{B\mi C_n}$, 

except that $\mc G'$ splits over $F^{(2)}$ but not over $F$.

\subsection{$\mathbf{ {}^2 C\mi B_{2n}}$} \
\label{par:2C-Beven}

$\mc G = PSO^*_{4n}$, not quasi-split over $F$, quasi-split over $F^{(2)}$, split over $E^{(2)}$

Local index: \
\raisebox{-5mm}{\includegraphics[width=4cm]{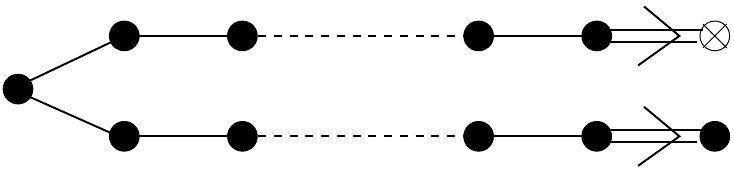}}

$\Fr$ exchanges the upper and the lower row

Relative local Dynkin diagram: 
\raisebox{-1mm}{\includegraphics[width=4cm]{2BCn.pdf}}

$\Omega_G$ has two elements, it acts trivially on the diagram\\

$\mc G' = PSp_{4n-2}$, not split over $F$, split over $F^{(2)}$

Local index: \
\raisebox{-5mm}{\includegraphics[width=4cm]{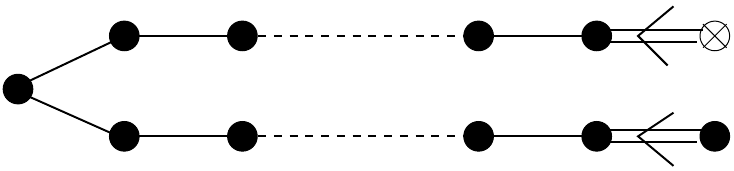}}

Relative local Dynkin diagram: 
\raisebox{-1mm}{\includegraphics[width=4cm]{Cn.pdf}}\\

Isogenous group $\mc G = Spin_{4n}^*$: $|\Omega_G| = 1$ and $\mc G' = Sp_{4n-2}$.

Isogenous group $\mc G = SO_{4n}^*$: $|\Omega_G| = 2$ and $\mc G' = Sp_{4n-2} \times \{\pm 1\}$.

\subsection{$\mathbf{ {}^2 C\mi B_{2n+1}}$} \
\label{par:2C-Bodd}

$\mc G = PSO^*_{4n+2}$, not quasi-split over $F$, quasi-split over $F^{(2)}$, split over $E^{(2)}$

Local index: \
\raisebox{-5mm}{\includegraphics[width=4cm]{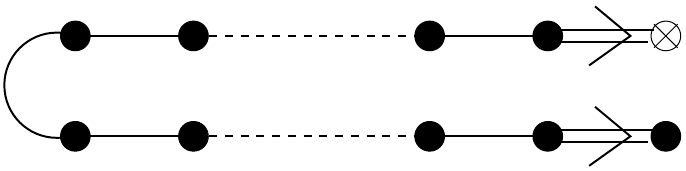}}

$\Fr$ exchanges the upper and the lower row

Relative local Dynkin diagram: 
\raisebox{-1mm}{\includegraphics[width=4cm]{CBn.pdf}}

$\Omega_G$ has two elements, it acts trivially on the diagram\\

$\mc G' = PSp_{4n}$, not split over $F$, split over $F^{(2)}$

Local index: \
\raisebox{-5mm}{\includegraphics[width=4cm]{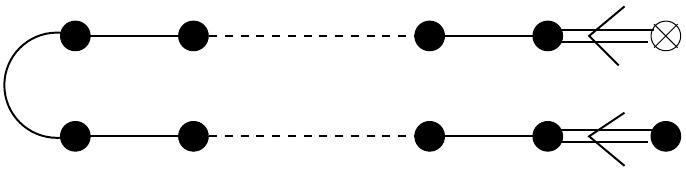}}

Relative local Dynkin diagram: 
\raisebox{-1mm}{\includegraphics[width=4cm]{CBCn.pdf}}\\

Isogenous group $\mc G = Spin_{4n+2}^*$: $|\Omega_G| = 1$ and $\mc G' = Sp_{4n}$.

Isogenous group $\mc G = SO_{4n+2}^*$: $|\Omega_G| = 2$ and $\mc G' = Sp_{4n} \times \{\pm 1\}$.

\subsection{$\mathbf{F_4^I}$} \
\label{par:F4I}

$\mc G = {}^2 E_{6,\ad}$, quasi-split over $F$, split over $E$

Local index and relative local Dynkin diagram: \
\raisebox{-1mm}{\includegraphics[width=3cm]{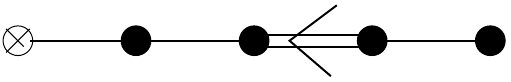}}

Trivial $\Fr$-action

$\Omega_G$ has one element\\

$\mc G' = F_4$, $F$-split

Local index and relative local Dynkin diagram: \
\raisebox{-1mm}{\includegraphics[width=3cm]{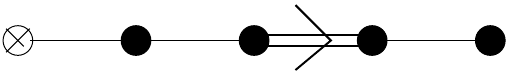}}\\

Isogenous group $\mc G = {}^2 E_{6,\Sc}$: $|\Omega_G| = 1$ and $\mc G' = F_4$.

\subsection{$\mathbf{G_2^I}$} \
\label{par:G2I}

$\mc G = {}^r D_{4,\ad}$ with $r = 3$ or $r =6$, quasi-split over $F$, split over a Galois extension 

$E' / F$ of degree $r$ such that the unique degree 3 subextension of $F$ is ramified

Local index and relative local Dynkin diagram: \
\raisebox{-1mm}{\includegraphics[width=16mm]{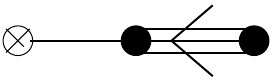}}

Trivial $\Fr$-action

$\Omega_G$ has one element\\

$\mc G' = G_2$, $F$-split

Local index and relative local Dynkin diagram: \
\raisebox{-1mm}{\includegraphics[width=16mm]{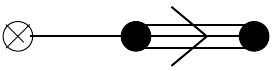}} \\

Isogenous group $\mc G = {}^r D_{4,\Sc}$: $|\Omega_G| = 1$ and $\mc G' = G_2$. \\

To fulfill the requirement $\Omega_G \cong \Omega_{G'}$ we sometimes needed a disconnected 
group $\mc G' = \mc G^{'\circ} \times \{\pm 1\}$. All standard operations for connected 
reductive groups extend naturally such $\mc G'$. For instance, the Bruhat--Tits building of
$(\mc G',F)$ is that of $(\mc G^{'\circ},F)$, with $\{\pm 1\}$ acting trivially. 
In particular a parahoric subgroup of $\mc G' (F)$ is a parahoric subgroup of 
$\mc G^{'\circ}(F)$. The complex dual group of $\mc G^{'\circ} \times \{\pm 1\}$ is defined 
to be $(\mc G^{'\circ})^\vee \times \{\pm 1\}$. In Lemma \ref{lem:4.1} we will see that 
this fits well, which motivates our choice of $\mc G'$.

\section{Matching of unipotent representations}
\label{sec:unirep}

By construction there is a canonical bijection between the set of faces of $C_\nr$
and the collection of proper subsets of $\Delta_{\nr,\af}$. Explicitly, it
associates to a face $\mf f$ the set $J_{\mf f}$ of simple affine roots of
$\Sigma_\nr$ that vanish on $\mf f$. With \eqref{eq:1.1} and 
$\Delta_\af = \Delta_{\nr,\af} / \mr{Gal}(F_\nr / F)$ this leads to canonical
bijections between the following sets:
\begin{itemize}
\item proper subsets of $\Delta_\af$;
\item $\Fr$-stable proper subsets of $\Delta_{\nr,\af}$;
\item $\Fr$-stable faces of $C_\nr$;
\item faces of $C_0$.
\end{itemize}
Let $\mf f$ be a $\Fr$-stable face of $C_\nr$, identified with a face of $C_0$.
Bruhat and Tits \cite{BrTi2} associated to $\mf f$ an $\mf o_F$-group $\mc G_{\mf f}$,
such that $\mc G_{\mf f}^\circ (\mf o_F)$ equals the parahoric subgroup $P_{\mf f}
\subset G$ associated to $\mf f$ and $\mc G_{\mf f}(\mf o_F) = N_G (P_{\mf f})$ equals
the $G$-stabilizer of $\mf f$.

Let $\overline{\mc G_{\mf f}}$ be the maximal reductive quotient of $\mc G_{\mf f}$
as $k_F$-group. Then $\overline{\mc G_{\mf f}^\circ}(k_F) = P_{\mf f} / P_{\mf f}^+$, 
where $P_{\mf f}^+$ denotes the pro-unipotent radical of $P_{\mf f}$.

Let $\Omega_{G,\mf f}$ be the stabilizer of $\mf f$ in $\Omega_G$ (with respect to
its action on $\Delta_\af$). Then
\begin{equation}\label{eq:2.1}
\overline{\mc G_{\mf f}} / \overline{\mc G_{\mf f}^\circ} \cong N_G (P_{\mf f}) / P_{\mf f}
\cong \Omega_{G,\mf f} . 
\end{equation}
The algebraic group $\overline{\mc G_{\mf f}^\circ}$ splits over $k_{F_\nr}$ (an
algebraic closure of $k_F$) and its Dynkin diagram is the subdiagram of $\Delta_{\nr,\af}$
formed by the vertices in $J_{\mf f}$. Further, the isogeny class of the $k_F$-group
$\overline{\mc G_{\mf f}^\circ}$ is determined by the action of $\Fr$ on $J_{\mf f}$,
so it only depends on $\mf f$ and the local index of $\mc G$.

\begin{prop}\label{prop:2.1}
Let $\mc G$ be a ramified simple $F$-group and let $\mc G'$ be its $F_\nr$-split 
companion group, as in Section \ref{sec:list}. Let $\mf f$ be a $\Fr$-stable
face of $C_\nr$ and let $\mf f'$ be the face of $C'_\nr$ corresponding to it via the
bijection between the local indices of $\mc G$ and $\mc G'$.
Then the $k_F$-groups $\overline{\mc G_{\mf f}}$ and $\overline{\mc G'_{\mf f'}}$ have the
following in common:
\begin{itemize}
\item their Lie type, up to changing the direction of some arrows in the Dynkin diagram;
\item their dimension;
\item $|\overline{\mc G^\circ_{\mf f}}(k_F)| = |\overline{\mc G^{'\circ}_{\mf f'}}(k_F)|$;
\item $\Omega_{G,\mf f} \cong \Omega_{G',\mf f'}$.
\end{itemize}
\end{prop}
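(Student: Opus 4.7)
The plan is to deduce all four statements from the bijection $\Delta_{\nr,\af} \leftrightarrow \Delta'_{\nr,\af}$ built in Section \ref{sec:list}, which is $\Fr$-equivariant, preserves the number of bonds (only arrow directions may differ), and restricts to a bijection $J_{\mf f} \leftrightarrow J_{\mf f'}$. By Bruhat--Tits theory, $\overline{\mc G^\circ_{\mf f}} \times_{k_F} \overline{k_F}$ is a split reductive group of semisimple rank $|J_{\mf f}|$ and absolute rank $|\Delta_\nr|$, whose derived subgroup has Dynkin diagram the subdiagram of $\Delta_{\nr,\af}$ spanned by $J_{\mf f}$, and whose $k_F$-structure is recorded by the $\Fr$-action on $J_{\mf f}$; the analogous description holds for $\overline{\mc G^{'\circ}_{\mf f'}}$ with $J_{\mf f'}$ in place of $J_{\mf f}$.

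The first bullet is then immediate from the properties of the bijection. For the second, the dimension of a reductive group equals the cardinality of its root system plus its rank, and both quantities are invariant under Langlands duality of the semisimple factor (arrow reversal); since $|\Delta_\nr| = |\Delta'_\nr|$ and the derived Dynkin diagrams agree up to arrow directions, the dimensions coincide. For the third, the order of $\overline{\mc G^\circ_{\mf f}}(k_F)$ is computed by Steinberg's formula in terms of $q_F$, the degrees of the fundamental invariants of the Weyl group and the induced $\Fr$-action on the coinvariant algebra; Weyl groups and their degrees are insensitive to Langlands duality, and the $\Fr$-action on $J_{\mf f}$ agrees with that on $J_{\mf f'}$ by construction, so both orders come out the same.

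For the last bullet, the isomorphism $\Omega_G \cong \Omega_{G'}$ built into the definition of the companion group is by construction equivariant for the bijection of relative local Dynkin diagrams; via \eqref{eq:1.1} this upgrades to equivariance for the bijection on $\Fr$-stable subsets of $\Delta_{\nr,\af}$, and restricting to the stabilizer of $\mf f$ (equivalently, of $J_{\mf f}$) produces the isomorphism $\Omega_{G,\mf f} \cong \Omega_{G',\mf f'}$. I expect the main obstacle to be the case-by-case verification against the list of Section \ref{sec:list}, particularly the disconnected companions $\mc G' = \mc G^{'\circ} \times \{\pm 1\}$ that arise for some isogenous groups: there the factor $\{\pm 1\}$ is inserted precisely to match a component of $\Omega_G$ that fixes every face of $C_0$, so it lies in both $\Omega_{G,\mf f}$ and $\Omega_{G',\mf f'}$ for every face and the isomorphism persists.
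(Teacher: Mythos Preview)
Your proposal is correct and follows essentially the same route as the paper. The paper argues by decomposing $\overline{\mc G^\circ_{\mf f}}$ into an almost direct product of simple factors $\overline{\mc G^i_{\mf f}}$, observing that each is isogenous to $\overline{\mc G^{'i}_{\mf f'}}$ or its dual, and then invoking the isogeny- and duality-invariance of dimensions and of $k_F$-point counts (citing \cite{GeMa} and Chevalley's formulas); your use of the formula $\dim = |\Phi| + \mathrm{rk}$ and of Steinberg's order formula packages the same invariances without the explicit factorisation, which is a harmless stylistic difference.
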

\begin{proof}
The setup from Section \ref{sec:list} provides a bijection between $J_{\mf f}$ and $J_{\mf f'}$,
which preserves the number of bonds in the Dynkin diagrams of $\overline{\mc G_{\mf f}}$ 
and $\overline{\mc G'_{\mf f'}}$. Decomposing into connected components, we get
$J_{\mf f} = \bigsqcup_i J_{\mf f}^i$ and $J_{\mf f'} = \bigsqcup_i J_{\mf f'}^i$ where the 
connected Dynkin diagram $J_{\mf f}^i$ is isomorphic to $J_{\mf f'}^i$ or to its dual.

Write $\overline{\mc G_{\mf f}^\circ}$ as an almost direct product of simple groups
$\overline{\mc G_{\mf f}^i}$, and similarly for $\overline{\mc G^{'\circ}_{\mf f'}}$.
Then $\overline{\mc G_{\mf f}^i}$ is isogenous to $\overline{\mc G^{'i}_{\mf f'}}$ or to
the dual group of $\overline{\mc G^{'i}_{\mf f'}}$ over $k_F$. Consequently
\[
\dim \overline{\mc G_{\mf f}^\circ} = \sum\nolimits_i \dim \overline{\mc G_{\mf f}^i} =
\sum\nolimits_i \dim \overline{\mc G^{'i}_{\mf f'}} = \dim \overline{\mc G^{'\circ}_{\mf f'}} .
\]
The number of elements of a connected reductive group over a finite field only depends
on the group up to isogeny \cite[Proposition 1.4.13.c]{GeMa}. It also does not change
if we replace a simple group by its dual group, by Chevalley's counting formulas
\cite[Theorem 1.6.7]{GeMa}. We deduce
\[
\big| \overline{\mc G^\circ_{\mf f}}(k_F) \big| = 
\prod\nolimits_i \big| \overline{\mc G^i_{\mf f}}(k_F) \big| = 
\prod\nolimits_i \big| \overline{\mc G^{'i}_{\mf f'}}(k_F) \big| = 
\big| \overline{\mc G^{'\circ}_{\mf f'}}(k_F) \big| .
\]
The claim about $\Omega_{G,\mf f}$ follows from the $\Omega_G$-equivariance of the bijection
between $\Delta_\af$ and $\Delta'_\af$.
\end{proof}

It is well-known that the conjugacy classes of parabolic $k_F$-subgroups of 
$\overline{\mc G^\circ_{\mf f}}$ are naturally in bijection with the subsets of the 
Dynkin diagram $J_{\mf f}$. Every conjugacy class of parabolic $k_F$-subgroups of 
$\overline{\mc G^\circ_{\mf f}}$ contains a unique parabolic subgroup $\mc P$ which is standard 
(with respect to the image of $\mc S$ and the basis $\Delta$ of $\Phi (\mc G,\mc S)$).
We denote the unique standard $k_F$-Levi factor of $\mc P$ by $\mc M$ and its unipotent
radical by $\mc U$.

The same holds for $\overline{\mc G^{'\circ}_{\mf f'}}$, and we have a bijection
between $J_{\mf f}$ and $J_{\mf f'}$. Thus $(\mc P,\mc M)$ determines a unique
standard parabolic pair $(\mc P',\mc M')$ for $\overline{\mc G^{'\circ}_{\mf f'}}$, defined
over $k_F$.

Let $\Irr_\unip (H)$ denote the collection of irreducible unipotent representations of a
group $H$. 

\begin{prop}\label{prop:2.2}
Use the notations from Proposition \ref{prop:2.1} and let $\mc P = \mc M \mc U$
be a standard parabolic $k_F$-subgroup $\mc P = \mc M \mc U$ of 
$\overline{\mc G^\circ_{\mf f}}(k_F)$.
\enuma{
\item There exists a natural bijection
\[ 
\Irr_{\unip,\cusp} (\mc M (k_F)) \longleftrightarrow \Irr_{\unip,\cusp} (\mc M' (k_F)) .
\]
\item There exists a bijection
\[ 
\Irr_\unip (\mc M (k_F)) \longleftrightarrow \Irr_\unip (\mc M' (k_F)) ,
\]
which preserves dimensions.
\item Extend part (b) to a bijection $\Rep_\unip (\mc M (k_F)) \longleftrightarrow 
\Rep_\unip (\mc M' (k_F))$ by making it additive.
The system of such bijections, with $\mc P$ running over all standard
parabolic $k_F$-subgroups of $\overline{\mc G^\circ_{\mf f}}(k_F)$, is compatible
with parabolic induction and Jacquet restriction.
}
\end{prop}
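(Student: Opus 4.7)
The strategy is to reduce to simple factors, then apply Lusztig's classification of unipotent characters. Write $\mc M$ as an almost direct product of simple $k_F$-factors $\mc M^i$ with a central $k_F$-torus, and likewise decompose $\mc M'$; the bijection $J_{\mf f} \leftrightarrow J_{\mf f'}$ from Proposition~\ref{prop:2.1} pairs the $\mc M^i$ with the $\mc M^{'i}$, and the central tori have equal rank. A torus contributes only the trivial unipotent representation, so the problem reduces to producing matching bijections $\Irr_\unip(\mc M^i(k_F)) \leftrightarrow \Irr_\unip(\mc M^{'i}(k_F))$ in a way compatible with Harish-Chandra induction on each factor.

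\emph{Part (a).} The cuspidal unipotent characters of a simple finite group of Lie type depend, by Lusztig, only on the absolute Dynkin type together with the $\Fr$-action on the diagram. The matching $J_{\mf f}^i \leftrightarrow J_{\mf f'}^i$ preserves this data up to reversal of arrows, and the cuspidal unipotent set is insensitive to such reversal: for self-dual Dynkin types this is immediate, while in the mixed $B_n/C_n$ case a cuspidal unipotent exists precisely under the same arithmetic condition on $n$ on both sides, and is then unique. The product over $i$ yields the desired canonical bijection.

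\emph{Parts (b) and (c).} Invoke the Howlett--Lehrer decomposition: every $\pi \in \Irr_\unip(\mc M^i(k_F))$ corresponds to a triple $(\mc L,\sigma,\psi)$, with $\mc L$ a $k_F$-Levi of $\mc M^i$, $\sigma \in \Irr_{\unip,\cusp}(\mc L(k_F))$, and $\psi$ an irreducible module of the Hecke algebra $\mc H(\mc M^i,\mc L,\sigma)$; this algebra is of Iwahori--Hecke type on the relative Weyl group $W_{\mc M^i}(\mc L,\sigma)$, with parameters appearing in Lusztig's tables. Apply (a) to transport $(\mc L,\sigma)$ to $(\mc L',\sigma')$. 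The relative Weyl groups $W_{\mc M^i}(\mc L,\sigma)$ and $W_{\mc M^{'i}}(\mc L',\sigma')$ coincide because they are read off the shared root-datum data (up to duality), and the Hecke parameters coincide on both sides -- in the mixed $B/C$ case this is part of Lusztig's fact that unipotent character degrees agree literally under $B_n \leftrightarrow C_n$. Sending $(\mc L,\sigma,\psi) \mapsto (\mc L',\sigma',\psi)$ gives the bijection in (b); the Howlett--Lehrer degree formula is a function only of these Weyl/Hecke data together with $|\mc M^i(k_F)|$ and $|\mc L(k_F)|$, all preserved by Proposition~\ref{prop:2.1}, so dimensions are matched. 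Part (c) is then automatic, since the bijection is constructed via Harish-Chandra induction from cuspidal data, and compatibility with Jacquet restriction follows by adjunction.

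\textbf{Main obstacle.} The genuinely delicate step is checking that Hecke parameters and unipotent character degrees are literally preserved under $B_n \leftrightarrow C_n$ type duality for the specific Levi subdiagrams that actually arise from the list in Section~\ref{sec:list}. This is encoded in Lusztig's symbol-parametrization of unipotent characters of classical groups, but it requires running through the cases to confirm that the relevant Levi configurations indeed behave as claimed.
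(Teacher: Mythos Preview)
Your proposal is correct and follows essentially the same route as the paper: reduce to simple factors via Proposition~\ref{prop:2.1}, invoke Lusztig's classification of cuspidal unipotent characters (which is insensitive to arrow reversal) for part (a), match the Hecke algebras attached to cuspidal pairs for part (b) with dimensions preserved by the degree formula, and obtain part (c) from transitivity of parabolic induction together with adjunction. The paper cites \cite{Lus-Che} directly (in particular Table~II and (3.26.1)) where you speak of Howlett--Lehrer, and it notes as a simplifying observation that no type-$E_n$ factors occur in the list of Section~\ref{sec:list}, but the argument is the same.
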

\begin{proof}
(a) By \cite[Proposition 3.15]{Lus-Che} $\Irr_\unip (\overline{\mc G^\circ_{\mf f}}(k_F))$
depends only on $\overline{\mc G^\circ_{\mf f}}$ up to isogeny. Using the constructions
from the proof of Proposition \ref{prop:2.1}, we obtain a natural bijection
\begin{equation}\label{eq:2.2}
\Irr_\unip (\overline{\mc G^\circ_{\mf f}}(k_F)) \longleftrightarrow
\prod\nolimits_i \Irr_\unip (\overline{\mc G^i_{\mf f}}(k_F)) ,
\end{equation}
and similarly for $\overline{\mc G^{'\circ}_{\mf f'}}(k_F)$.
The unipotent representations of $\overline{\mc G^i_{\mf f}}(k_F)$ are built from
the cuspidal unipotent representations of Levi factors $\mc M_i (k_F)$ of parabolic
subgroups $\mc P_i (k_F)$ and from Hecke algebras, see \cite[Theorem 3.26]{Lus-Che}.
Since $\overline{\mc G^i_{\mf f}}$ and $\overline{\mc G^{'i}_{\mf f'}}$ have the same
Dynkin diagram (up to the direction of arrows), the conjugacy class of 
$\mc P_i = \mc M_i \mc U_i$ corresponds to a unique conjugacy class of parabolic subgroups 
$\mc P'_i = \mc M'_i \mc U'_i$ of $\overline{\mc G^{'i}_{\mf f'}}$. Then $\mc M_i$ and
$\mc M'_i$ also have the same Dynkin diagram (up to the direction of arrows).

The classification of cuspidal unipotent representations in \cite[\S 3]{Lus-Che} shows
that for each such $\mc P_i$ there is a canonical bijection
\begin{equation}\label{eq:2.3}
\Irr_{\unip,\cusp}(\mc M_i (k_F)) \longleftrightarrow \Irr_{\unip,\cusp}(\mc M'_i (k_F)) ,
\end{equation}
which preserves dimensions. Moreover, if $\psi$ is any $\Fr$-equivariant automorphism of 
the Dynkin diagram of $\mc M_i$ (and hence also for $\mc M'_i$) and we lift it to automorphisms
of $\mc M_i (k_F)$ and of $\mc M'_i (k_F)$, then \eqref{eq:2.3} is $\psi$-equivariant.
These claims can be checked case-by-case from \cite{Lus-Che}. To make that easier, one may note 
that the list in Section \ref{sec:list} shows that no factors of Lie type $E_n$ are involved.
We conclude that the bijection \eqref{eq:2.3} is natural.

Now we combine \eqref{eq:2.3} with \eqref{eq:2.2} for $\Irr_{\cusp,\unip}(\mc M (k_F))$ and for
$\Irr_{\cusp,\unip}(\mc M' (k_F))$. \\
(b) Fix $\mc P_i = \mc M_i \mc U_i$ and $\rho \in \Irr_{\unip,\cusp}(\mc M_i (k_F))$, and let
$\rho'_i \in \Irr_{\unip,\cusp}(\mc M'_i (k_F))$ be its image under \eqref{eq:2.3}.
From \cite[Table II]{Lus-Che} we see that the Hecke algebra 
$\mr{End}_{\overline{\mc G^i_{\mf f}}(k_F)} \big( 
\mr{ind}^{\overline{\mc G^i_{\mf f}}(k_F)}_{\mc P_i (k_F)} \rho_i \big)$ for
$(\overline{\mc G^i_{\mf f}}, \mc P_i, \rho_i)$ is isomorphic to the Hecke algebra of
$(\overline{\mc G^{'i}_{\mf f'}}, \mc P'_i, \rho'_i)$. This works for all $(\mc P_i,\rho_i)$ 
and, as described in \cite[\S 3.25]{Lus-Che}, it gives rise to a bijection
\begin{equation}\label{eq:2.4}
\Irr_\unip \big( \overline{\mc G^i_{\mf f}}(k_F) \big) \longleftrightarrow 
\Irr_\unip \big( \overline{\mc G^{'i}_{\mf f'}}(k_F) \big) .
\end{equation}
By \cite[(3.26.1)]{Lus-Che} and Proposition \ref{prop:2.1}, \eqref{eq:2.4} preserves 
dimensions. Combine this with \eqref{eq:2.2} to get part (b) for 
$\overline{\mc G^i_{\mf f}}(k_F)$. For $\mc M(k_F)$ it can be shown in the same way.\\
(c) In the constructions for part (b) everything is obtained by parabolic induction from
the cuspidal level, followed by selecting suitable subrepresentations by means of Hecke
algebras. In view of the transitivity of parabolic induction, this setup entails that
the system of bijections $\Rep_\unip (\mc M (k_F)) \longleftrightarrow 
\Rep_\unip (\mc M' (k_F))$ is compatible with parabolic induction. Since Jacquet
restriction is the adjoint functor of parabolic induction, the system of bijections is
also compatible with that.
\end{proof}

By \eqref{eq:2.1} the group $\Omega_{G,\mf f}$ acts naturally on
$\Irr_\unip (\overline{\mc G^\circ_{\mf f}}(k_F))$.

\begin{thm}\label{thm:2.3}
\enuma{
\item The bijection 
\[
\Irr_\unip (\overline{\mc G^\circ_{\mf f}}(k_F)) \longleftrightarrow
\Irr_\unip (\overline{\mc G^{'\circ}_{\mf f'}}(k_F)) ,
\]
constructed in the proof of Proposition \ref{prop:2.2}.b is $\Omega_{G,\mf f}$-equivariant.
\item It extends to a bijection 
\[
\Irr_\unip (\overline{\mc G_{\mf f}}(k_F)) \longleftrightarrow 
\Irr_\unip (\overline{\mc G'_{\mf f'}}(k_F)) ,
\]
which preserves dimensions and cuspidality.
}
\end{thm}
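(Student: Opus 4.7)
My plan is to track the bijection from Proposition \ref{prop:2.2}(b) step by step and verify equivariance at each stage, then use Clifford theory to pass to the disconnected group.

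For part (a), I would revisit the three ingredients of the construction: the decomposition \eqref{eq:2.2} of unipotent representations by simple factors of $\overline{\mc G^\circ_{\mf f}}$, the cuspidal matching \eqref{eq:2.3}, and the reconstruction of non-cuspidal unipotents via the Hecke algebras from \cite[Table II]{Lus-Che}. The action of $\Omega_{G,\mf f}$ on $\Irr_\unip(\overline{\mc G^\circ_{\mf f}}(k_F))$ is induced by its action on the Dynkin diagram $J_{\mf f}$, which in turn permutes the connected components $J^i_{\mf f}$ and acts on each orbit through diagram automorphisms. Under the $\Omega_G$-equivariant identification $J_{\mf f} \leftrightarrow J_{\mf f'}$ from Section \ref{sec:list} and the isomorphism $\Omega_{G,\mf f} \cong \Omega_{G',\mf f'}$ from Proposition \ref{prop:2.1}, this corresponds to the action of $\Omega_{G',\mf f'}$. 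Each of the three steps is functorial with respect to diagram automorphisms: (i) is tautological; (ii) is exactly the $\psi$-equivariance statement established in the proof of Proposition \ref{prop:2.2}(a); (iii) is functorial because the Hecke algebra in \cite[Table II]{Lus-Che} depends only on the triple $(\overline{\mc G^i_{\mf f}}, \mc P_i, \rho_i)$ up to isomorphism. Composing these steps yields $\Omega_{G,\mf f}$-equivariance.

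For part (b), I would use Clifford theory for the extension $\overline{\mc G_{\mf f}}(k_F)/\overline{\mc G^\circ_{\mf f}}(k_F) \cong \Omega_{G,\mf f}$ from \eqref{eq:2.1}. Irreducible unipotent representations of $\overline{\mc G_{\mf f}}(k_F)$ are parametrized by pairs $(\sigma,\chi)$ modulo $\Omega_{G,\mf f}$-conjugation, where $\sigma \in \Irr_\unip(\overline{\mc G^\circ_{\mf f}}(k_F))$ and $\chi$ is an irreducible (projective) representation of the stabilizer $\Omega_{G,\mf f,\sigma}$; the relevant 2-cocycle is trivial for unipotent representations of finite reductive groups by Lusztig's analysis. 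Part (a) supplies a bijection of $\Omega$-orbits, and Proposition \ref{prop:2.1} identifies the stabilizers on both sides. Choosing canonical extensions of $\Omega$-invariant $\sigma$ to their stabilizers using the Hecke algebra / standard module construction from \cite[\S 3.25]{Lus-Che} applied uniformly on both sides, the map $(\sigma,\chi) \mapsto (\sigma',\chi)$ extends the bijection of part (a). Dimension preservation follows from the Clifford formula $\dim\tilde\sigma = [\Omega_{G,\mf f}:\Omega_{G,\mf f,\sigma}]\cdot\dim\sigma\cdot\dim\chi$ together with Proposition \ref{prop:2.2}(b); cuspidality is preserved because $\tilde\sigma$ is cuspidal iff $\sigma$ is, and this transfers by Proposition \ref{prop:2.2}(a).

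The main obstacle will be the last point in (b): making the choice of extension of $\sigma$ to its stabilizer match on both sides. This reduces to checking that the Hecke-algebra isomorphism behind \eqref{eq:2.4} transports Lusztig's normalized extensions to each other. I expect this to follow from the case-by-case inspection enabled by the list in Section \ref{sec:list}, which rules out the most delicate factor types (no $E_n$-type components appear in any $\overline{\mc G^\circ_{\mf f}}$ for a ramified simple $F$-group), so the only twisted Hecke algebras involved are of classical type and their normalizations are well understood.
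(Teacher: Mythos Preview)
Your approach coincides with the paper's: equivariance at the cuspidal level plus the Hecke-algebra construction for (a), then Clifford theory for (b). Two points where the paper is simpler or sharper.

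For (a), your step (iii) is slightly underspecified: saying the Hecke algebra ``depends only on the triple up to isomorphism'' does not by itself pin down how $\omega$ acts on $\Irr(\mc H)$, which is what you need to compare with the $G'$-side. The paper makes this explicit: when $\omega$ stabilizes the $\overline{\mc G^\circ_{\mf f}}(k_F)$-orbit of $(\mc P,\rho)$, one chooses a representative $\omega$ with $(\omega\mc P\omega^{-1},\omega\cdot\rho)=(\mc P,\rho)$, extends $\rho$ to $\tilde\rho$ on $\langle\mc P(k_F),\omega\rangle$, induces to $\overline{\mc G_{\mf f}}(k_F)$, and reads off the automorphism $\psi_\omega$ of $\mc H$ as conjugation by the image of $\omega$. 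Since $\psi_\omega$ is determined up to inner automorphisms --- hence exactly on $\Irr(\mc H)$ --- by cuspidal data already matched under the bijection \eqref{eq:2.3}, equivariance follows. Your sketch implicitly relies on this, but the argument deserves to be made precise.

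For (b), the paper exploits $|\Omega_{G,\mf f}|\le 2$ (immediate from Section~\ref{sec:list}) to bypass general Clifford theory and any discussion of 2-cocycles: either $\omega$ moves $\pi$ and $\pi\oplus\omega\cdot\pi$ extends uniquely and irreducibly, or $\omega$ fixes $\pi$ and there are exactly two extensions $\pi_\pm$. More importantly, your ``main obstacle'' dissolves: the paper does \emph{not} claim the extended bijection is canonical. It simply sends $\tilde\pi\mapsto\tilde{\pi'}$ and $\pi_\pm\mapsto\pi'_\pm$ by fiat, explicitly remarking that swapping $\pi'_+$ and $\pi'_-$ would do just as well. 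So no matching of normalized extensions, and no case-by-case inspection, is needed for the statement as written.
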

\begin{proof}
Recall from Section \ref{sec:list} that $|\Omega_G| \leq 2$. 
When $|\Omega_{G,\mf f}| = 1$, \eqref{eq:2.1} shows that there is nothing to prove. 
So we may assume that $\Omega_{G,\mf f} = \{1,\omega\} \cong \Omega_{G',\mf f'}$,
where we have picked representatives for $\omega$ in $\overline{\mc G_{\mf f}}(k_F)$
and in $\overline{\mc G'_{\mf f'}}(k_F)$.\\
(a) The bijections \eqref{eq:2.3} combine to a $\Omega_{G,\mf f}$-equivariant 
bijection between
\begin{equation}\label{eq:2.5}
\left\{ \begin{array}{c}
(\mc P = \mc M \mc U, \rho) : \mc P \text{ parabolic } k_F\text{-subgroup of }
\overline{\mc G^\circ_{\mf f}} \\ 
\text{ with Levi factor } \mc M, \rho \in \Irr_{\unip,\cusp}(\mc M (k_F)) 
\end{array} \right\} \Big/ \overline{\mc G^\circ_{\mf f}}(k_F) \text{-conjugacy}
\end{equation}
to its analogue for $\overline{\mc G^{'\circ}_{\mf f'}}(k_F)$.

When $\omega$ does not stabilize the $\overline{\mc G^\circ_{\mf f}}(k_F)$-orbit of 
$(\mc P,\rho)$, the subsets of $\Irr_\unip (\overline{\mc G^\circ_{\mf f}}(k_F))$ 
associated to $(\mc P,\rho)$ and to $(\omega \mc P \omega^{-1}, \omega \cdot \rho)$ 
are disjoint \cite[\S 3.25]{Lus-Che}. In particular $\omega$ does not stabilize any 
representation in such a set.

Suppose now that $(\omega \mc P \omega^{-1}, \omega \cdot \rho)$ is 
$\overline{\mc G^\circ_{\mf f}}(k_F)$-conjugate to $(\mc P,\rho)$. Choosing another
representative for $\omega$ in $\overline{\mc G_{\mf f}}(k_F)$, we may assume that
$(\omega \mc P \omega^{-1}, \omega \cdot \rho) = (\mc P,\rho)$. Since the parabolic
subgroup $\mc P$ is its own normalizer in $\overline{\mc G_{\mf f}}$, this choice of
$\omega$ is unique up to inner automorphisms of $\mc P (k_F)$.
Now $\rho$ extends to a representation $\tilde \rho$ of 
\[
\langle \mc P (k_F),\omega \rangle = \mc P (k_F) \cup \omega \mc P (k_F).
\]
In fact there are two choices for $\tilde \rho$, differing by a quadratic character, but 
which one does not matter because we only need conjugation by $\tilde \rho (\omega)$.

Then $\Pi := \mr{ind}^{\overline{\mc G^\circ_{\mf f}}(k_F)}_{\mc P (k_F)} \rho$ 
extends to the $\overline{\mc G_{\mf f}}(k_F)$-representation $\tilde \Pi := 
\mr{ind}^{\overline{\mc G_{\mf f}}(k_F)}_{\langle \mc P (k_F), \omega \rangle} \tilde \rho$.
Conjugation by $\tilde \Pi (\omega)$ provides an automorphism $\psi_\omega$ of the 
Hecke algebra 
\[
\mc H := \mr{End}_{\overline{\mc G^\circ_{\mf f}}(k_F)} (\Pi) = 
\mr{End}_{\overline{\mc G^\circ_{\mf f}}(k_F)} (\tilde \Pi) .
\]
A $\pi \in \Irr_\unip (\overline{\mc G^\circ_{\mf f}}(k_F))$ associated to $(\mc P,\rho)$
corresponds to the irreducible $\mc H$-module $\mr{Hom}_{\overline{\mc G^\circ_{\mf f}}
(k_F)}(\Pi,\pi)$. Conversely, any $\pi_{\mc H} \in \Irr (\mc H)$ gives rise to
$E \otimes_{\mc H} \pi_{\mc H} \in \Irr_\unip (\overline{\mc G^\circ_{\mf f}}(k_F))$.
Under this correspondence the action of $\omega$ on 
$\Irr_\unip (\overline{\mc G^\circ_{\mf f}}(k_F))$ translates to the action of
$\psi_\omega$ on $\Irr (\mc H)$.

Given that $\omega$ stabilizes $(\mc P,\omega)$, the entire setup is canonical up to
inner automorphisms. The $\Omega_{G,\mf f}$-action can be described entirely with
data coming from the cuspidal level. Of course the same applies to 
$\overline{\mc G^{'\circ}_{\mf f'}}(k_F)$. Together with the $\Omega_{G,\mf f}$-equivariance
of the bijection involving \eqref{eq:2.5}, we deduce $\Omega_{G,\mf f}$-equivariance
in the desired generality.\\
(b) To extend the bijection from Proposition \ref{prop:2.2}.b to $\overline{\mc G_{\mf f}}(k_F)$ 
and $\overline{\mc G'_{\mf f'}}(k_F)$, we need Clifford theory with respect to the action
of $\Omega_{G,\mf f} \cong \Omega_{G',\mf f'}$ on 
$\Irr_\unip (\overline{\mc G^\circ_{\mf f}}(k_F))$ and 
$\Irr_\unip (\overline{\mc G^{'\circ}_{\mf f'}}(k_F))$.

When $\omega$ does not stabilize $\pi \in \Irr_\unip (\overline{\mc G^\circ_{\mf f}}(k_F))$,
the $\overline{\mc G^\circ_{\mf f}}(k_F)$-representation $\pi \oplus \omega \cdot \pi$
extends to an irreducible $\overline{\mc G_{\mf f}}(k_F)$-representation $\tilde \pi$.
In this way the pair $\{\pi,\omega \cdot \pi\}$ accounts for one element of
$\Irr_\unip (\overline{\mc G_{\mf f}}(k_F))$. 

When $\omega$ stabilizes $\pi \in \Irr_\unip (\overline{\mc G^\circ_{\mf f}}(k_F))$,
$\pi$ extends in precisely two ways to an irreducible representation of 
$\overline{\mc G_{\mf f}}(k_F)$. The two extensions $\pi_+, \pi_-$ are related by 
$\pi_- (\omega) = -\pi_+ (\omega)$. Thus $\pi$ gives rise to a pair $\{\pi_+,\pi_-\}$
in $\Irr_\unip (\overline{\mc G_{\mf f}}(k_F))$. Clifford theory tells us that every
element of $\Irr_\unip (\overline{\mc G_{\mf f}}(k_F))$ arises in a unique way from
one of these two constructions.

In view of part (a), Clifford theory 
works in the exactly same way for $\overline{\mc G'_{\mf f'}}(k_F)$. Denoting the
bijection from Proposition \ref{prop:2.2}.b by $\pi \mapsto \pi'$, we can extend it to 
\[
\Irr_\unip (\overline{\mc G_{\mf f}}(k_F)) \longleftrightarrow 
\Irr_\unip (\overline{\mc G'_{\mf f'}}(k_F)) 
\]
by sending either sending $\tilde \pi$ to $\tilde \pi'$ or sending $\pi_+$ to 
$\pi'_+$ and $\pi_-$ to $\pi'_-$. (Notice that this is not canonical, for we could 
just as well exchange $\pi'_+$ and $\pi'_-$.)
As dimensions and cuspidality are preserved in Proposition \ref{prop:2.2}, they
are preserved here as well.
\end{proof}

It will be handy to know how the bijections in Theorem \ref{thm:2.3} behave with
respect to outer automorphisms of $\mc G$ and $\mc G'$. From the list in Section 
\ref{sec:list} we see that $\mc G'$ has Lie type $B_n, C_n, F_4$ or $G_2$, so all its
automorphisms are inner. On the other hand, the group $\mc G$ does admit outer 
automorphisms. Requiring that they fix a pinning, outer automorphisms can be classified 
in terms of $\mb W_F$-equivariant automorphisms of the (absolute) Dynkin diagram of 
$(\mc G,\mc T)$, see \cite[Corollary 3.3]{SolFunct}. Then we call them \emph{diagram
automorphisms} of $\mc G (F)$.

In paragraphs \ref{par:B-Cn}--\ref{par:F4I} the absolute root datum of $\mc G$ 
admits exactly one nontrivial automorphism $\tau$. In paragraph \ref{par:G2I} there 
is no such automorphism for ${}^6 D_4$, and there are two for ${}^3 D_4$, say $\tau$ 
and $\tau^2$. In all these cases $\tau$ lifts to an automorphism of $\mc G (F)$ because 
$\mc G$ is either quasi-split or the unique inner twist of its quasi-split form. 

Recall that the $F$-points of $\mc G$ can be obtained from its $F_s$-points by
taking the invariants with respect to the $\mb W_F$-action that defines the structure
as $F$-group. This $\mb W_F$-action is a combination of the natural Galois action on
matrix coefficients and algebraic group automorphisms. In the cases under consideration,
one element of $\mb W_F$ acts as $g \mapsto \overline{\tau (g)}$, where the overline
indicates a field automorphism. It follows that on $\mc G (F)$, $\tau$ works out as
\begin{itemize}
\item the nontrivial field automorphism of $E/F$, applied to matrix coefficients,
for $\mathbf{B\mi C_n , C\mi BC_n , C\mi B_n, F_4^I}$;
\item the nontrivial field automorphism of $E^{(2)}/F^{(2)}$, applied to matrix 
coefficients, for $\mathbf{ {}^2 B\mi C_n , {}^2 C\mi B_{2n}, 
{}^2 C\mi B_{2n+1}}$;
\item one of the two nontrivial field automorphisms of $E'/F$, applied to matrix 
coefficients, for $\mathbf{G_2^I}$ with $\mc G$ of type ${}^3 D_4$. 
\end{itemize}

\begin{lem}\label{lem:2.5} 
The diagram automorphism $\tau$ of $\mc G (F)$ stabilizes the groups $P_{\mf f},
N_G (P_{\mf f}),\\ \overline{\mc G^\circ_{\mf f}}(k_F)$ and
$\overline{\mc G_{\mf f}}(k_F)$, as well as all their unipotent representations.
\end{lem}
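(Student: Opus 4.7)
The plan is to first show that $\tau$ fixes the apartment $\mh A_{\mc S}$ pointwise, from which stability of $P_{\mf f}$ and $N_G(P_{\mf f})$ follows, and then to use the ramified nature of the splitting extension to conclude that $\tau$ descends to the \emph{identity} automorphism of the reductive quotients. The stability of unipotent representations is then automatic.

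For the first part, I would use that $\mc S$ is $F$-split, so every element of $\mc S(F)$ has matrix coefficients in $F$ under any $F$-rational realization of $\mc G$. Together with the paragraph preceding the lemma, which describes $\tau$ on $\mc G(F)$ as the nontrivial field automorphism of $E/F$ (respectively $E^{(2)}/F^{(2)}$ or $E'/F$) applied to matrix coefficients, this forces $\tau$ to act trivially on $\mc S(F)$, and hence by Zariski density on $\mc S$ as an $F$-subgroup. Consequently $\tau$ acts trivially on $\mh A_{\mc S} = X_*(\mc S) \otimes_\Z \R$. Since $\tau$ additionally preserves the pinning — in particular the Borel $\mc B$ and the origin — it stabilizes $C_0$ and every one of its faces $\mf f$ pointwise. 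The stabilization of $P_{\mf f}$ and $N_G(P_{\mf f})$ is then immediate from their characterization in terms of $\mf f$.

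For the reductive quotients, the key observation is that the splitting extension involved ($E/F$, $E^{(2)}/F^{(2)}$, or $E'/F$ in its unique ramified layer) is totally ramified, so the residue field does not change and the field automorphism induces the identity on it. For $g \in P_{\mf f}$ with matrix coefficients in the relevant ring of integers $\mf o$, expanding in a uniformizer $\pi$ — where $\bar\pi = \zeta\pi$ for a suitable root of unity $\zeta$ (e.g.\ $\zeta = -1$ in the quadratic cases, a primitive cube root in the cubic case in $\mathbf{G_2^I}$) — shows $\tau(g) - g \in \pi \cdot M_N(\mf o) \subset \mf m \cdot M_N(\mf o)$. Hence $g \cdot \tau(g)^{-1}$ lies in the pro-unipotent radical $P_{\mf f}^+$, so $\tau$ descends to the identity on $P_{\mf f}/P_{\mf f}^+ = \overline{\mc G^\circ_{\mf f}}(k_F)$. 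The same argument, applied with $N_G(P_{\mf f})$ in place of $P_{\mf f}$, yields the analogous statement for $\overline{\mc G_{\mf f}}(k_F)$. In particular every irreducible representation of these finite groups, and a fortiori every unipotent one, is fixed up to isomorphism.

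The most delicate point is ensuring that the matrix-coefficient computation translates cleanly into a statement about the intrinsic reduction map $P_{\mf f} \to \overline{\mc G^\circ_{\mf f}}(k_F)$, uniformly across the cases of Section \ref{sec:list}. The underlying principle — that the Galois automorphism of a totally ramified extension acts trivially on the residue field and shifts integral matrix entries only within $\mf m \cdot M_N(\mf o)$ — is what makes the descent to the identity work in all cases, but tracking this through the mixed unramified/ramified towers such as $E^{(2)}/F^{(2)}/F$ and through the cubic extension for $\mathbf{G_2^I}$ requires a little care.
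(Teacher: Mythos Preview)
Your first part, showing that $\tau$ fixes $\mh A_{\mc S}$ pointwise and hence stabilizes $P_{\mf f}$ and $N_G(P_{\mf f})$, is fine and reaches the same conclusion as the paper, though the paper phrases it via the local index (which is $\Delta_{\nr,\af}$ modulo the $\mb I_F$-action, hence fixed by $\tau$) rather than directly through $\mc S(F)$.

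The divergence is in the second part. You assert that $\tau$ descends to the \emph{identity} on $\overline{\mc G^\circ_{\mf f}}(k_F)$ and $\overline{\mc G_{\mf f}}(k_F)$, which is stronger than what the paper proves. The paper only shows that $\tau$ acts as an \emph{inner} automorphism coming from $(\overline{\mc G^\circ_{\mf f}})_\ad(k_F)$: since $\tau$ fixes $\Delta_{\nr,\af}$ pointwise, it fixes the Dynkin diagram of $\overline{\mc G^\circ_{\mf f}}$ pointwise, hence is inner over $k_F$. That alone does not fix arbitrary representations, so the paper then invokes \cite[Proposition 3.15]{Lus-Che}: every unipotent representation of $\overline{\mc G^\circ_{\mf f}}(k_F)$ extends to $(\overline{\mc G^\circ_{\mf f}})_\ad(k_F)$, hence is $\tau$-stable. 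For $\overline{\mc G_{\mf f}}(k_F)$ the paper pushes this through the Clifford theory already set up in Theorem \ref{thm:2.3}.

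Your matrix-coefficient argument has the gap you yourself flag, and it is a real one. The hypothesis that entries of $g \in P_{\mf f}$ lie in $\mf o$ holds only for particular facets; for a general $\mf f$ the parahoric is described by a lattice chain with entries in $\pi^{a_{ij}}\mf o_E$, and then $\overline{\pi^{a_{ij}}} = \zeta^{a_{ij}}\pi^{a_{ij}}$ produces a unit factor that does not obviously land $g^{-1}\tau(g)$ in $P_{\mf f}^+$. More fundamentally, the reduction map $P_{\mf f} \to \overline{\mc G^\circ_{\mf f}}(k_F)$ is defined via the Bruhat--Tits $\mf o_F$-scheme, not by naive entrywise reduction, so one would need to argue intrinsically. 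The problem is worse for $N_G(P_{\mf f})$: representatives of nontrivial elements of $\Omega_{G,\mf f}$ typically involve negative powers of $\pi$, so the estimate $\tau(g)-g \in \pi M_N(\mf o)$ does not even apply. Whether $\tau$ is literally the identity on the reductive quotients may well be true, but your sketch does not establish it uniformly across all facets and cases; the paper's route via ``inner plus Lusztig's extension result'' bypasses the issue entirely.
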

\begin{proof}
The local index of $\mc G$ is obtained from the Dynkin diagram of $(\mc G,\mc T)$
by di\-vi\-ding out the $\mb I_F$-action. Here $\mb I_F$ acts via powers of $\tau$, so
$\tau \in \mr{Aut}(\mc G(F))$ acts trivially on the local index of $\mc G$. It
follows that $\tau$ stabilizes every face of $C_\nr$, and hence acts on
the four indicated groups.

The absolute Dynkin diagram of 
$\overline{\mc G^\circ_{\mf f}}$ is a subdiagram of $\Delta_{\nr,\af}$, and $\tau$ 
fixes that pointwise. Consequently $\tau$ acts on $\overline{\mc G^\circ_{\mf f}}$ by
an inner $k_F$-automorphism, that is, as conjugation by an element of the adjoint
group $(\overline{\mc G^\circ_{\mf f}})_\ad (k_F)$. It is known from 
\cite[Proposition 3.15]{Lus-Che} that every unipotent representation $\pi$ of 
$\overline{\mc G^\circ_{\mf f}}(k_F)$ extends to a representation of 
$(\overline{\mc G^\circ_{\mf f}})_\ad (k_F)$. This shows that $\tau$ stabilizes all 
unipotent representations of $\overline{\mc G^\circ_{\mf f}}(k_F)$ and of $P_{\mf f}$.

In the proof of Theorem \ref{thm:2.3}.b we saw how Clifford theory produces irreducible
unipotent representations of $\overline{\mc G_{\mf f}}(k_F)$ from those of 
$\overline{\mc G^\circ_{\mf f}}(k_F)$. The constructions over there work just as well
when we consider $\pi$ as $(\overline{\mc G^\circ_{\mf f}})_\ad (k_F)$-representation.
The extension $\overline{\mc G_{\mf f}}$ of $\overline{\mc G^\circ_{\mf f}}$ by
$\Omega_{G,\mf f}$ naturally induces an extension $(\overline{\mc G_{\mf f}})_\ad$ of
$(\overline{\mc G^\circ_{\mf f}})_\ad$ by $\Omega_{G,\mf f}$.
It follows that $\tilde \pi$, $\pi_+$ and $\pi_-$ are also representations of 
$(\overline{\mc G_{\mf f}})_\ad (k_F)$. In particular $\tau$ acts on them via an 
element of $(\overline{\mc G_{\mf f}})_\ad (k_F)$, so these representations are 
stabilized by $\tau$. Clifford theory tells us that these account for all irreducible
unipotent representations of $\overline{\mc G_{\mf f}}(k_F)$ and of $N_G (P_{\mf f})$.
\end{proof}

Let $P_{\mf f}$ be a maximal parahoric subgroup of $G$ and let 
$\sigma \in \Irr (P_{\mf f})$ be inflated from a cuspidal unipotent representation of 
$\overline{\mc G_{\mf f}^\circ}(k_F) = P_{\mf f} / P_{\mf f}^+$. As noted for instance
in \cite{LusUni1,Mor1,Mor2,MoPr2}, $\mr{ind}_{P_{\mf f}}^G \sigma$ is a direct sum of finitely
many supercuspidal $G$-representations. 

For a more precise description we choose an extension $\sigma^N$ of $\sigma$ to 
$N_G (P_{\mf f})$. That is always possible \cite[Proposition 4.6]{Mor2}, and any two such 
extensions differ by a character of $N_G (P_{\mf f}) / P_{\mf f} \cong \Omega_{G,\mf f}$:
\begin{equation}\label{eq:2.13}
\mr{ind}_{P_{\mf f}}^{N_G (P_{\mf f})} (\sigma ) = 
\bigoplus\nolimits_{\chi \in \Irr (\Omega_{G,\mf f})} \sigma^N \otimes \chi .
\end{equation}
Every supercuspidal unipotent $G$-representation is of the form
\begin{equation}\label{eq:2.6}
\mr{ind}_{N_G (P_{\mf f})}^G (\sigma^N ) . 
\end{equation}
Given a supercuspidal unipotent $G$-representation, the pair $(N_G (P_{\mf f}),\sigma^N)$ 
is unique up to conjugation.

Let $G^\vee$ be the complex dual group of $\mc G$, endowed with an action of Gal$(F_s/ F)$ 
(by pinned automorphisms) coming from the $F$-structure of $\mc G$. Then $\mf g^\vee = 
\mr{Lie}(G^\vee)$ is a representation of Gal$(F_s/F)$ and of $\mb W_F$. We denote its Artin 
conductor by $\mb a (\mf g^\vee)$. We note that by \cite[(18) and \S 3.4]{GrRe} this equals the
Artin conductor of the motive of $\mc G$. For $F_\nr$-split groups $\mb a (\mf g^\vee) = 0$, 
while for ramified groups $\mb a (\mf g^\vee) \in \Z_{>0}$.

Let $|\omega_G|$ be the canonical Haar measure on $G$ from \cite[\S 5]{GaGr}. 
Let $\psi : F \to \C^\times$ be an additive character. Recall that the order of $\psi$ is the 
largest $n \in \Z$ such that $\psi (f) = 1$ for all $f \in F$ of valuation $\geq -n$. 
Following \cite[(A.25)]{FOS1} we normalize the Haar measure on $G$ as
\begin{equation}\label{eq:2.11}
\mu_{G,\psi} = q_F^{-(\mb a (\mf g^\vee) + \mr{ord}(\psi) \dim (\mc G / Z(\mc G)_s) ) / 2} |\omega_G |.
\end{equation}
Actually this is a correction on \cite[(A.25)]{FOS1}, because there it said dim$(\mc G)$ instead of
dim$(\mc G / Z(\mc G)_s)$. In \cite{FOS1} the authors took the reduction from $G$ to $G / Z(G)_s$
(a standard way to handle formal degrees when the centre of $G$ is not compact) for granted, so that
\cite[(A.25)]{FOS1} should only be used when $Z(G)$ is compact. For \cite[Lemmas A.5 and A.6]{FOS1}
this does not matter, they work equally well with \eqref{eq:2.11} and with \cite[(A.25)]{FOS1}.
Apart from that, \cite{FOS1} only uses these normalized Haar measures in Proposition A.7. The proof
of that, especially the part about reduction to the case where $\psi$ has order 0, is actually 
based on the normalization \eqref{eq:2.11}.

Unless explicitly mentioned otherwise, we assume that $\psi$ has order 0. 
For $F_\nr$-split groups \eqref{eq:2.11} agrees with the normalizations in
\cite{Gro,GaGr,HII}, while for ramified groups the correction term $q_F^{-\mb a (\mf g^\vee)/2}$ 
is needed to relate formal degrees to adjoint $\gamma$-factors as in \cite{HII}. 

The computation of the volume of the Iwahori subgroup of $G$ in \cite[(4.11)]{Gro} gives:
\begin{equation}\label{eq:2.9}
\mr{vol}(P_{\mf f}) = \big| \overline{\mc G_{\mf f}^\circ} (k_F) \big| \, q_F^{-\big( 
\mb a (\mf g^\vee) + \dim \overline{\mc G_{\mf f}^\circ} + \dim (G^\vee)^{\mb I_F} \big) / 2} .
\end{equation}
By \cite[\S 5.1]{DeRe} this formula actually holds for every facet $\mf f$ and every 
connected reductive $F$-group.

For a ramified simple group, we will see in Lemma \ref{lem:4.1} that
\begin{equation}\label{eq:2.12}
\dim (G^\vee)^{\mb I_F} = \dim (G'^\vee)^{\mb I_F} = \dim G'^\vee = \dim \mc G' . 
\end{equation}
With \eqref{eq:2.9}, \eqref{eq:2.12} and Proposition \ref{prop:2.1} we can compare the 
Haar measures on $G$ and $G'$:
\begin{equation}\label{eq:2.10}
\mr{vol}(P_{\mf f'}) = \big| \overline{\mc G_{\mf f'}^{'\circ}} (k_F) \big| \,
q_F^{-\big( \dim \overline{\mc G_{\mf f'}^{'\circ}} + \dim (G'^\vee)^{\mb I_F} \big) / 2} =  
q_F^{\mb a (\mf g^\vee) / 2} \mr{vol}(P_{\mf f}) .
\end{equation}
By \eqref{eq:2.1} the formal degree of \eqref{eq:2.6} is
\begin{equation}\label{eq:2.8}
\mr{fdeg}\big( \mr{ind}_{N_G (P_{\mf f})}^G \sigma^N \big) = 
\frac{\dim (\sigma^N)}{\mr{vol}(N_G (P_{\mf f}))} = 
\frac{\dim (\sigma) q_F^{\big( \mb a (\mf g^\vee) + \dim \overline{\mc G_{\mf f}^\circ} + 
\dim (G^\vee)^{\mb I_F} \big) / 2}}{|\Omega_{G,\mf f}| \, | \overline{\mc G_{\mf f}^\circ} (k_F)|} . 
\end{equation}

\begin{cor}\label{cor:2.4}
Every diagram automorphism of $\mc G (F)$ or $\mc G' (F)$ stabilizes every irreducible
supercuspidal unipotent representation of that group.

The bijection from Theorem \ref{thm:2.3}.b induces a bijection
\[
\begin{array}{ccc}
\Irr_{\unip,\cusp}(G) & \longleftrightarrow & \Irr_{\unip,\cusp}(G') \\
\pi & \leftrightarrow & \pi'
\end{array} 
\]
which relates formal degrees as
\[
\mr{fdeg}(\pi') = q_F^{- \mb a (\mf g^\vee) / 2} \mr{fdeg}(\pi)
\]
This bijection is canonical up to choosing extensions of cuspidal unipotent representations
of $P_{\mf f}$ to $N_G (P_{\mf f})$ (or equivalently: from 
$\overline{\mc G_{\mf f}^\circ}(k_F)$ to $\overline{\mc G_{\mf f}}(k_F)$).
\end{cor}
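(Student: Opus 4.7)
The plan is to combine the parametrization \eqref{eq:2.6} of supercuspidal unipotent $G$-representations by types $(N_G (P_{\mf f}), \sigma^N)$ with the bijection of Theorem \ref{thm:2.3}.b between unipotent representations of $\overline{\mc G_{\mf f}}(k_F)$ and $\overline{\mc G'_{\mf f'}}(k_F)$, and then to compute formal degrees by substituting into \eqref{eq:2.8} and invoking the numerical identities of Proposition \ref{prop:2.1}.

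First I would dispose of the diagram-automorphism invariance. For $\mc G'$ the list in Section \ref{sec:list} shows that $\mc G'$ has Lie type among $B_n, C_n, F_4, G_2$, none of which admits a nontrivial diagram automorphism, so the claim is vacuous on that side. For $\mc G$, write $\pi = \mr{ind}_{N_G (P_{\mf f})}^G \sigma^N$ and observe that by Lemma \ref{lem:2.5} any diagram automorphism $\tau$ of $\mc G (F)$ stabilizes $N_G (P_{\mf f})$ and fixes the isomorphism class of $\sigma^N$, viewed as a unipotent representation of $\overline{\mc G_{\mf f}}(k_F)$ via inflation; hence $\tau \cdot \pi \cong \pi$.

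Next I would build the bijection. Every supercuspidal unipotent $\pi \in \Irr (G)$ is of the form $\mr{ind}_{N_G (P_{\mf f})}^G \sigma^N$ with $(N_G (P_{\mf f}), \sigma^N)$ unique up to $G$-conjugation, $\sigma^N$ inflated from a cuspidal unipotent representation of $\overline{\mc G_{\mf f}}(k_F)$. The correspondence $\mf f \leftrightarrow \mf f'$ from Section \ref{sec:list} is well defined on conjugacy classes because both sides are indexed by faces of the respective fundamental alcoves $C_0, C'_0$, and those are in bijection via $\Delta_\af \leftrightarrow \Delta'_\af$. Applying Theorem \ref{thm:2.3}.b, which preserves cuspidality, transports $\sigma^N$ to $\sigma'^N$ on $N_{G'}(P_{\mf f'})$, and I set $\pi' = \mr{ind}_{N_{G'}(P_{\mf f'})}^{G'} \sigma'^N$. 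The resulting map is bijective because the input bijection is, and because the type data determine the supercuspidal representation.

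The formal-degree identity then falls out by direct substitution into \eqref{eq:2.8} on both sides: Theorem \ref{thm:2.3}.b supplies $\dim \sigma = \dim \sigma'$; Proposition \ref{prop:2.1} supplies equalities of $\dim \overline{\mc G_{\mf f}^\circ}$, of $|\overline{\mc G_{\mf f}^\circ}(k_F)|$ and of $|\Omega_{G,\mf f}|$ with their primed analogues; $\mb a (\mf g'^\vee) = 0$ because $\mc G'$ is $F_\nr$-split; and \eqref{eq:2.12} gives $\dim (G^\vee)^{\mb I_F} = \dim (G'^\vee)^{\mb I_F}$. Comparing the $q_F$-exponents in \eqref{eq:2.8} yields exactly the factor $q_F^{-\mb a (\mf g^\vee)/2}$ asserted. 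The main obstacle, and the reason the statement only claims near-canonicity, lies in the canonicity clause: the bijection of Theorem \ref{thm:2.3}.b is only canonical up to interchanging the two Clifford extensions $\pi_\pm$ of an $\Omega_{G,\mf f}$-stable cuspidal representation with the corresponding pair $\pi'_\pm$ on the companion side. This ambiguity is precisely a choice of extension of cuspidal unipotent representations from $\overline{\mc G^\circ_{\mf f}}(k_F)$ to $\overline{\mc G_{\mf f}}(k_F)$, equivalently via inflation from $P_{\mf f}$ to $N_G (P_{\mf f})$; once such extensions are fixed on both sides the entire construction becomes canonical.
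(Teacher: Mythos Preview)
Your proof is correct and follows essentially the same approach as the paper: diagram-automorphism invariance via Lemma \ref{lem:2.5} and the observation that $\mc G'$ admits no outer automorphisms, the bijection via Theorem \ref{thm:2.3}.b together with the matching of facets, and the formal-degree comparison via \eqref{eq:2.8} and the numerical identities of Proposition \ref{prop:2.1}. The only cosmetic difference is that the paper packages the volume comparison into \eqref{eq:2.10} before invoking \eqref{eq:2.8}, whereas you substitute directly; and the paper attributes the canonical part of the bijection to Proposition \ref{prop:2.2} while you (equivalently) trace the non-canonical part to the Clifford-theoretic choice in Theorem \ref{thm:2.3}.b.
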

\begin{proof}
The first claim follows from Lemma \ref{lem:2.5} and the discussion preceding it.
The bijectivity is a consequence of Theorem \ref{thm:2.3} and the bijection between 
$\Delta_{\nr,\af}$ and $\Delta'_{\nr,\af}$. The indicated canonicity comes from 
Proposition \ref{prop:2.2}. The relation between the formal degrees follows \eqref{eq:2.8}, 
\eqref{eq:2.10} and the dimension preservation in Theorem \ref{thm:2.3}.b.
\end{proof}

\section{Matching of Hecke algebras}
\label{sec:Hecke}

To analyse the non-supercuspidal unipotent $G$-representations, we need types and 
Hecke algebras, following \cite{BuKu}. This was worked out for general depth zero
representations in \cite{Mor1,Mor3}, and for representations of $F_\nr$-split
simple groups of adjoint type in \cite[\S 1]{LusUni1}. Fortunately the arguments 
from \cite[\S 1]{LusUni1} also apply to ramified simple groups, see \cite[\S 3]{SolLLC}.
We recall the main points, in the notation from \cite[\S 3]{SolLLC}. 

Let $\hat P_{\mf f}$ be the pointwise stabilizer of $\mf f$ in $G$, so
$P_{\mf f}^+ \subset P_{\mf f} \subset \hat P_{\mf f} \subset N_G (P_{\mf f})$. Then
\[
\hat P_{\mf f} / P_{\mf f} \cong \Omega_{G,\mf f,\tor} ,
\]
where the right hand side denotes the pointwise stabilizer of $\mf f$ in 
$\Omega_G$ (or equivalently the pointwise stabilizer in $\Omega_G$ of all vertices
of $\mf f$). As $\ker (\Omega_G \to \Omega_{G_\ad})$ acts trivially on the relative
local Dynkin diagram of $\mc G$, it is contained in $\Omega_{G,\mf f,\tor}$.

Let $\hat \sigma$ be an extension of a cuspidal unipotent representation
$\sigma$ of $P_{\mf f}$ to $\hat P_{\mf f}$. Then $(\hat P_{\mf f}, \hat \sigma)$
is a type for a single Bernstein block of $G$, say $\Rep (G)_{(\hat P_{\mf f}, \hat \sigma)}$.
We denote the associated Hecke algebra by 
\begin{equation}\label{eq:3.9}
\mc H (G,\hat P_{\mf f}, \hat \sigma) = 
\mr{End}_G \big( \mr{ind}_{P_{\mf f}}^G \hat \sigma \big).
\end{equation}
There is an equivalence of categories
\begin{equation}\label{eq:3.1}
\begin{array}{ccc}
\Rep (G)_{(\hat P_{\mf f}, \hat \sigma)} & \longrightarrow & 
\Mod ( \mc H (G,\hat P_{\mf f}, \hat \sigma) ) \\
\pi & \mapsto & \Hom_{\hat P_{\mf f}} (\hat \sigma, \pi) 
\end{array}.
\end{equation}
Let $J_{\mf f} \subset \Delta_\af$ be the set of simple affine roots that vanish on $\mf f$.
If $|J_{\mf f}| = |\Delta_\af| - 1$, then $\mr{ind}_{\hat P_{\mf f}}^G (\hat \sigma)$ is
irreducible, supercuspidal and $\mc H (G,\hat{P_{\mf f}}, \hat \sigma) \cong \C$.

Henceforth we assume that $|J_{\mf f}| < |\Delta_\af| - 1$, so that $\mf f$ is not a vertex of
$\mc B (\mc G,F)$ and $P_{\mf f}$ is not a maximal parahoric subgroup of $G$. 
The set $\Delta_{\mf f,\af} := \Delta_\af \setminus J_{\mf f}$ indexes a set of generators
$S_{\mf f,\af}$ for an affine Weyl group $W_\af (J_{\mf f},\sigma)$ contained in
$N_G (S) / (N_G (S) \cap P_{C_0})$. Let $\ell$ be the length function of the
Coxeter system $(W_\af (J_{\mf f},\sigma), S_{\mf f,\af})$. Together with a parameter function
$q^{\mc N} : \Delta_{\mf f,\af} \to \R_{>0}$ this gives rise to an Iwahori--Hecke algebra 
$\mc H (W_\af (J_{\mf f},\sigma),q^{\mc N})$. As a $\C$-vector space it has a basis
$\{ N_w : w \in W_\af (J_{\mf f},\sigma) \}$, every generator $N_s$ (with $s \in S_{\mf f,\af}$)
satisfies a quadratic relation
\begin{equation}\label{eq:3.2}
\big( N_s - q^{\mc N (s)/2} \big) \big( N_s + q^{\mc N (s)/2} \big) = 0 
\end{equation}
and there are braid relations
\begin{equation}\label{eq:3.5}
N_w N_v = N_{wv} \quad \text{whenever} \quad \ell (w) + \ell (v) = \ell (wv). 
\end{equation}
Moreover the relations \eqref{eq:3.2} and \eqref{eq:3.5} provide a presentation of 
$\mc H (W_\af (J_{\mf f},\sigma),q^{\mc N})$.

The group $\Omega_{G,\mf f} / \Omega_{G,\mf f,\tor}$ (which in our setting has order one
or two) acts naturally on $(W_\af (J_{\mf f},\sigma), S_{\mf f,\af})$ and on 
$\mc H (W_\af (J_{\mf f},\sigma),q^{\mc N})$. With these notations there is an algebra isomorphism
\begin{equation}\label{eq:3.3}
\mc H (G, \hat P_{\mf f}, \hat \sigma) \cong
\mc H (W_\af (J_{\mf f},\sigma),q^{\mc N}) \rtimes \Omega_{G,\mf f} / \Omega_{G,\mf f,\tor} .
\end{equation}
When $\Omega_{G,\mf f} / \Omega_{G,\mf f,\tor}$ is represented by 
$\{1,\omega\} \subset N_G (P_{\mf f})$, the basis element $N_\omega$ of \eqref{eq:3.3} 
acts on $\mr{ind}_{\hat P_{\mf f}}^G \hat \sigma$ by
\begin{equation}\label{eq:3.7}
(N_\omega f) (g) = \sigma^N (\omega) f (g \omega) \qquad 
f \in \mr{ind}_{\hat P_{\mf f}}^G \hat \sigma ,
\end{equation}
where $\sigma^N \in \Irr (N_G (P_{\mf f}))$ is an extension of $\hat \sigma$.
Hence \eqref{eq:3.3} is canonical up choosing such an extension, 
or equivalently up to a character of $N_G (P_{\mf f}) / \hat P_{\mf f}$.

As Lusztig noted in \cite[\S 10.13]{LusUni2}, all these constructions depend only on the local
index of $G$ and on the action of $\Omega_G$ on the relative local Dynkin diagram.\\

Let $\mc G'$ be the $F_\nr$-split companion group of $\mc G$, as in Section \ref{sec:list}.
Applying the proof of Theorem \ref{thm:2.3}.b to Theorem \ref{thm:2.3}.a with 
$\Omega_{G,\mf f,\tor}$ instead of $\Omega_{G,\mf f}$, we obtain a bijection
\begin{equation}\label{eq:3.4}
\Irr_\unip (\hat P_{\mf f}) \longleftrightarrow \Irr_\unip (\hat P_{\mf f'}) , 
\end{equation}
which preserves dimensions and cuspidality. In fact, as $|\Omega_{G,\mf f,\tor}| \leq 2$,
we can take for \eqref{eq:3.4} just an instance of Theorem \ref{thm:2.3}.a if 
$\Omega_{G,\mf f,\tor} = 1$ and an instance of Theorem \ref{thm:2.3}.b otherwise.

\begin{thm}\label{thm:3.1}
Let $\hat \sigma \in \Irr_{\unip,\cusp}(\hat P_{\mf f})$ and let $\hat \sigma' \in
\Irr_{\unip,\cusp} (\hat P_{\mf f'})$ be its image under \eqref{eq:3.4}. 
\enuma{
\item The bijection $\Delta'_{\nr,\af} \longleftrightarrow \Delta_{\nr,\af}$ 
induces an isomorphism of Coxeter systems
\[
\begin{array}{ccc}
(W_\af (J_{\mf f'},\sigma'), S_{\mf f',\af}) & \longrightarrow & 
(W_\af (J_{\mf f},\sigma), S_{\mf f,\af}) \\
(w',s') & \mapsto & (w,s)  
\end{array} . 
\]
\item The linear map
\[
\begin{array}{ccc}
\mc H (W_\af (J_{\mf f'},\sigma'), q^{\mc N'}) & \longrightarrow & 
\mc H (W_\af (J_{\mf f},\sigma), q^{\mc N}) \\
N_{w'} & \mapsto & N_w 
\end{array}
\]
is an algebra isomorphism. 
\item Part (b) and Theorem \ref{thm:2.3}.b induce an algebra isomorphism
\[
\mc H (G',\hat P_{\mf f'},\hat \sigma') \longrightarrow \mc H (G,\hat P_{\mf f},\hat \sigma) .
\]
}
\end{thm}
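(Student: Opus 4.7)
For part (a), the plan is to leverage the $\Fr$-equivariant bijection $\Delta'_{\nr,\af} \leftrightarrow \Delta_{\nr,\af}$ from Section \ref{sec:list}, which preserves the number of bonds in each edge of the Dynkin diagrams (only arrow directions may differ). Passing to $\Fr$-orbits gives a bijection $\Delta'_\af \leftrightarrow \Delta_\af$, and since Theorem \ref{thm:2.3} matches $\sigma$ with $\sigma'$, the subsets $J_{\mf f}$ and $J_{\mf f'}$ correspond under it. Restricting gives a bijection $\Delta_{\mf f',\af} \leftrightarrow \Delta_{\mf f,\af}$ and hence of generating sets $S_{\mf f',\af} \leftrightarrow S_{\mf f,\af}$. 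The Coxeter matrix of $(W_\af(J_{\mf f},\sigma), S_{\mf f,\af})$ depends only on the bond multiplicities of the relevant sub-diagram, and by construction of $\mc G'$ these multiplicities agree on both sides. Thus the two Coxeter systems are isomorphic.

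For part (b), I need the parameter functions $q^{\mc N}$ and $q^{\mc N'}$ to be compatible under the bijection of generators. The number $\mc N(s)$ attached to $s \in S_{\mf f,\af}$ is a local invariant at the wall fixed by $s$: if $\mf f_s$ denotes the face of $C_0$ adjacent to $\mf f$ across that wall, then $q^{\mc N(s)}$ is read off from $|\overline{\mc G^\circ_{\mf f_s}}(k_F)|/|\overline{\mc G^\circ_{\mf f}}(k_F)|$ together with the decomposition of the parabolically induced cuspidal $\sigma$ on the quotient $\overline{\mc G^\circ_{\mf f_s}}(k_F)$, in the manner of \cite[\S 1]{LusUni1}. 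Proposition \ref{prop:2.1} gives the equality of these group orders for $\mf f$ and $\mf f_s$, while Proposition \ref{prop:2.2} and Theorem \ref{thm:2.3} match the cuspidal representations and the induced decompositions. As Lusztig remarks in \cite[\S 10.13]{LusUni2}, the whole Hecke-algebra datum depends only on the local index and the $\Omega_G$-action on the relative local Dynkin diagram, both of which are identical for $\mc G$ and $\mc G'$ by the construction in Section \ref{sec:list}. Since the algebras are presented by the quadratic and braid relations \eqref{eq:3.2}--\eqref{eq:3.5}, the map $N_{w'} \mapsto N_w$ is then automatically an algebra homomorphism, and by symmetry an isomorphism.

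For part (c), I combine (b) with the semidirect product decomposition \eqref{eq:3.3}. Proposition \ref{prop:2.1} yields $\Omega_{G,\mf f} \cong \Omega_{G',\mf f'}$, and since $\Omega_{G,\mf f,\tor}$ is characterised as the pointwise stabiliser of the vertices of $\mf f$, which correspond bijectively to those of $\mf f'$, the torsion subgroups correspond as well, hence so do the quotients. Theorem \ref{thm:2.3}.a provides $\Omega_{G,\mf f}$-equivariance of the matching of unipotent representations, so once we fix an extension $\sigma^N$ of $\hat\sigma$ to $N_G(P_{\mf f})$ and the corresponding extension of $\hat\sigma'$ from Theorem \ref{thm:2.3}.b, the action of $\Omega_{G,\mf f}/\Omega_{G,\mf f,\tor}$ on the Iwahori--Hecke algebra, given explicitly by \eqref{eq:3.7}, is intertwined by the isomorphism of part (b). The desired algebra isomorphism then follows by forming the semidirect products.

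The main obstacle is part (b): confirming that the specific parameter function used in Lusztig's construction pulls back correctly under the combinatorial bijection, rather than merely the underlying Coxeter structure. Since $q^{\mc N}$ is a purely combinatorial invariant of the local index together with the cuspidal datum at $\mf f$, and both invariants are matched by Proposition \ref{prop:2.1} and Theorem \ref{thm:2.3}, I expect the verification to reduce, if needed, to a case-by-case check against the short list in Section \ref{sec:list}, in the same spirit as the proof of Proposition \ref{prop:2.2}.
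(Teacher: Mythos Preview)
Your proposal is correct and follows essentially the same route as the paper: for (a) and (b) you use that the Coxeter system and the parameter function $q^{\mc N}$ depend only on the local index modulo arrow directions (the paper cites \cite[\S 1.15, 1.18, 2.28--2.30]{LusUni1} and \cite[Table II]{Lus-Che} for this, where you argue it via Propositions \ref{prop:2.1}--\ref{prop:2.2}), and for (c) you pass to the semidirect product \eqref{eq:3.3} using matched extensions from Theorem \ref{thm:2.3}.b. The one point you omit is that the paper also checks the resulting isomorphism in (c) is independent of the auxiliary choice of $\sigma^N$: replacing $\sigma^N$ by $\sigma^N \otimes \chi_-$ changes the image under Theorem \ref{thm:2.3}.b by the same twist, and \eqref{eq:3.7} then shows the induced map on Hecke algebras is unchanged.
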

\begin{proof}
(a) Recall from Section \ref{sec:list} that the local indices of $G$ and $G'$ are isomorphic
up to changing some arrows. From \cite[\S 1.15 and \S 2.28--2.30]{LusUni1} we see that
$(W_\af (J_{\mf f},\sigma), S_{\mf f,\af})$ depends only on $\mf f$ and on the local index
of $G$, and that it does not change if we reverse some arrows in $\Delta_{\nr,\af}$. This
gives the isomorphism of Coxeter systems.\\
(b) Similarly, \cite[\S 1.18]{LusUni1} and \cite[Table II]{Lus-Che} show that 
$q^{\mc N} : S_{\mf f,\af} \to \R_{>0}$ depends on $\mf f$ and the local index of $G$
(modulo changing the direction of arrows). From the relations \eqref{eq:3.2} and \eqref{eq:3.5} 
we see that part (a) extends linearly to an isomorphism of Iwahori--Hecke algebras.\\
(c) Choose an extension $\sigma^N$ of $\hat \sigma$ to $N_G (P_{\mf f})$ and use it to get
\eqref{eq:3.3}. Analogously, we use the image of $\sigma^N$ under Theorem \ref{thm:2.3}.b
to construct \eqref{eq:3.3} for $G'$. Then the group isomorphism 
\begin{equation}\label{eq:3.6}
\Omega_{G,\mf f} / \Omega_{G,\mf f,\tor} \cong \Omega_{G',\mf f'} / \Omega_{G',\mf f',\tor} 
\end{equation}
extends the isomorphism from part (b) to the indicated affine Hecke algebras.

We still need to check that this isomorphism does not depend on the choice of $\sigma^N$.
The only other possible extension of $\hat \sigma$ is $\sigma^N \otimes \chi_-$, where
$\chi_-$ denotes the unique nontrivial character of $N_G (P_{\mf f}) / \hat P_{\mf f}$.
Notice that the latter group is naturally isomorphic with \eqref{eq:3.6} and with
$N_{G'} (P_{\mf f'}) / \hat P_{\mf f'}$. Then the image in $\Irr (N_{G'}(P_{\mf f'}))$ is also
adjusted by tensoring with $\chi_-$, and \eqref{eq:3.7} shows that $\sigma^N \otimes \chi_-$ 
leads to the same isomorphism of affine Hecke algebras as $\sigma^N$.
\end{proof}

\begin{cor}\label{cor:3.2}
There are equivalences of categories
\[
\begin{array}{ccccccc}
\!\! \Rep (G)_{(\hat P_{\mf f}, \hat \sigma)} & \rightarrow & 
\Mod ( \mc H (G,\hat P_{\mf f}, \hat \sigma) ) & \rightarrow & 
\Mod ( \mc H (G',\hat P_{\mf f'}, \hat \sigma') ) & \leftarrow &
\!\! \Rep (G')_{(\hat P_{\mf f'}, \hat \sigma')} \\
\pi & \mapsto & \Hom_{\hat P_{\mf f}} (\hat \sigma, \pi) & \mapsto &
\Hom_{\hat P_{\mf f'}} (\hat \sigma', \pi') & \text{\reflectbox{$\mapsto$}} & \!\! \pi' 
\end{array} \!\!.
\]
With \eqref{eq:5} these combine to an equivalence between the categories 
$\Rep (G)_\unip$ and $\Rep (G')_\unip$.
\end{cor}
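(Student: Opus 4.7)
The first and third arrows are the equivalences from type theory recorded in \eqref{eq:3.1}, applied respectively to $(\hat P_{\mf f}, \hat \sigma)$ for $G$ and to $(\hat P_{\mf f'}, \hat \sigma')$ for $G'$; nothing new needs to be proved for them. For the middle arrow, the plan is to take pullback of modules along the algebra isomorphism
\[
\mc H (G',\hat P_{\mf f'}, \hat \sigma') \longrightarrow \mc H (G,\hat P_{\mf f}, \hat \sigma)
\]
supplied by Theorem \ref{thm:3.1}(c). Since this isomorphism has a two-sided inverse, pullback in either direction gives mutually quasi-inverse functors, so the middle arrow is an equivalence. Composing the three then yields an equivalence $\Rep (G)_{(\hat P_{\mf f}, \hat \sigma)} \simeq \Rep (G')_{(\hat P_{\mf f'}, \hat \sigma')}$ for each fixed type.

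For the second assertion I would invoke the Bernstein decomposition \eqref{eq:5} on both sides:
\[
\Rep (G)_\unip \;\cong\; \prod_{\{(\hat P_{\mf f}, \hat \sigma)\}/G\text{-conj}} \Rep (G)_{(\hat P_{\mf f}, \hat \sigma)}, \qquad
\Rep (G')_\unip \;\cong\; \prod_{\{(\hat P_{\mf f'}, \hat \sigma')\}/G'\text{-conj}} \Rep (G')_{(\hat P_{\mf f'}, \hat \sigma')}.
\]
The assignment $(\hat P_{\mf f}, \hat \sigma) \mapsto (\hat P_{\mf f'}, \hat \sigma')$, defined via the face correspondence $\mf f \leftrightarrow \mf f'$ from Section \ref{sec:list} together with the bijection \eqref{eq:3.4} on cuspidal unipotent representations, gives a bijection between the indexing sets of these two products. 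Taking the product of the per-type equivalences from the previous paragraph then delivers the global equivalence $\Rep (G)_\unip \simeq \Rep (G')_\unip$.

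The one point that genuinely requires verification, and which I regard as the main obstacle, is that the above assignment descends to conjugacy classes, i.e.\ that it sends $G$-conjugate types to $G'$-conjugate ones and is bijective at that level. I would argue this as follows: $G$-conjugation of types reduces, via Bruhat--Tits, to the action of the extended affine Weyl group on facets of $C_0$ together with the induced action on cuspidal unipotent representations of the associated finite reductive quotients. Our companion-group setup from Section \ref{sec:list} is explicitly designed so that this action is transported across the bijection $\Delta_\af \leftrightarrow \Delta'_\af$ in an $\Omega_G \cong \Omega_{G'}$-equivariant way, and the bijection \eqref{eq:3.4} inherits the $\Omega_{G,\mf f,\tor}$-equivariance from Theorem \ref{thm:2.3}(a). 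Together these ensure the bijection on conjugacy classes of types, whence the product functor is well-defined. Up to the choice of extensions from $P_{\mf f}$ to $N_G (P_{\mf f})$ already isolated in Theorem \ref{thm:3.1}, the resulting equivalence $\Rep (G)_\unip \simeq \Rep (G')_\unip$ is canonical.
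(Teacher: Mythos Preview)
Your argument is correct and follows essentially the same route as the paper: the per-block equivalences come from \eqref{eq:3.1} together with the Hecke algebra isomorphism of Theorem~\ref{thm:3.1}(c), and the global equivalence is obtained by matching the indexing sets in \eqref{eq:5} via the bijection $\Delta_{\nr,\af} \leftrightarrow \Delta'_{\nr,\af}$ and Theorem~\ref{thm:2.3}. The paper treats the descent to conjugacy classes as immediate from the $\Omega_G$-equivariance built into Section~\ref{sec:list}, whereas you spell this out more explicitly; this is a welcome elaboration rather than a departure.
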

\begin{proof}
The equivalences with the Bernstein block $\Rep (G)_{(\hat P_{\mf f}, \hat \sigma)}$ are a 
consequence of Theorem \ref{thm:3.1} and \eqref{eq:3.1}. By Theorem \ref{thm:2.3} and the bijection
$\Delta_{\nr,\af} \leftrightarrow \Delta'_{\nr,\af}$ the indexing set in \eqref{eq:5} is
in bijection with $\{ (\hat P_{\mf f'}, \hat \sigma') \} / G'$-conjugation. Hence the above
equivalence of categories for one Bernstein block combine, in the same way for $G$ and $G'$, 
to all unipotent Bernstein blocks.
\end{proof}

We aim to show that Corollary \ref{cor:3.2} preserves many relevant properties. Let 
$\mc L_{\mf f}$ be the standard $F$-Levi subgroup of $\mc G$ such that $\Phi (\mc L_{\mf f},
\mc S)$ consists precisely of the roots in $\Phi (\mc G,\mc S)$ that are constant on $\mf f$.
Then the cuspidal supports of elements of $\Irr (G)_{(\hat P_{\mf f}, \hat \sigma)}$ are
contained in a Bernstein component of $\Rep (L_{\mf f})$. We define $\mc L'_{\mf f'} \subset
\mc G'$ in the same way.

By definition, a character of $G$ is weakly unramified if it is trivial on every parahoric 
subgroup of $G$. Via the Kottwitz map $G \to \Omega_G$, these characters can be identified
with the characters of $\Omega_G$. In particular Section \ref{sec:list} provides a 
canonical bijection between the weakly unramified characters of $G$ and of $G'$.

\begin{lem}\label{lem:3.7}
The equivalence between the categories $\Rep (G)_\unip$ and $\Rep (G')_\unip$ from
Corollary \ref{cor:3.2} is compatible with twisting by weakly unramified characters.
\end{lem}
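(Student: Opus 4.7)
The plan is to decompose into Bernstein blocks and track how twisting by a weakly unramified character $\chi$ acts both on types and on Hecke algebras. Identify $\chi$ with its partner $\chi' \in X_\Wr (G')$ via the isomorphism $\Omega_G \cong \Omega_{G'}$ from Section \ref{sec:list}. Since $\chi$ is trivial on $P_{\mf f}$, twisting by $\chi$ carries $\Rep (G)_{(\hat P_{\mf f}, \hat \sigma)}$ into $\Rep (G)_{(\hat P_{\mf f}, \hat \sigma \otimes \chi|_{\hat P_{\mf f}})}$, where $\chi|_{\hat P_{\mf f}}$ is the character of $\hat P_{\mf f}/P_{\mf f} \cong \Omega_{G,\mf f,\tor}$ determined by $\chi$; analogously on the $G'$-side.

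The first step is to match twisted types under \eqref{eq:3.4}: I would verify that $\hat \sigma \otimes \chi|_{\hat P_{\mf f}}$ is sent to $\hat \sigma' \otimes \chi'|_{\hat P_{\mf f'}}$. This comes from the Clifford-theoretic construction of \eqref{eq:3.4} in the proof of Theorem \ref{thm:2.3}(b), which transforms naturally under tensoring with characters of $\Omega_{G,\mf f,\tor} \cong \Omega_{G',\mf f',\tor}$; compatibility of this identification with $\Omega_G \cong \Omega_{G'}$ ensures that $\chi|_{\hat P_{\mf f}}$ and $\chi'|_{\hat P_{\mf f'}}$ are themselves matched. Next, using $\mr{ind}_{\hat P_{\mf f}}^G (\hat \sigma) \otimes \chi \cong \mr{ind}_{\hat P_{\mf f}}^G (\hat \sigma \otimes \chi|_{\hat P_{\mf f}})$, I would obtain a canonical algebra isomorphism
\[
t_\chi : \mc H (G, \hat P_{\mf f}, \hat \sigma) \xrightarrow{\sim} \mc H (G, \hat P_{\mf f}, \hat \sigma \otimes \chi|_{\hat P_{\mf f}})
\]
that intertwines \eqref{eq:3.1} with the twist functor $\otimes \chi$; there is an analogous $t_{\chi'}$ on the $G'$-side.

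The main computation is then to verify that $t_\chi$ corresponds to $t_{\chi'}$ under two applications of Theorem \ref{thm:3.1}(c). Using the presentation \eqref{eq:3.3}, $t_\chi$ should act as the identity on the affine Iwahori--Hecke subalgebra $\mc H (W_\af (J_{\mf f}, \sigma), q^{\mc N})$, since its generators $N_s$ depend only on $\sigma = \hat \sigma|_{P_{\mf f}}$, which is unchanged by the twist. On $N_\omega$ for $\omega \in \Omega_{G,\mf f}/\Omega_{G,\mf f,\tor}$, formula \eqref{eq:3.7} shows that $t_\chi$ amounts to multiplication by $\chi (\omega)$, because replacing $\sigma^N$ by $\sigma^N \otimes \chi$ multiplies the scalar $\sigma^N(\omega)$ appearing there by $\chi(\omega)$. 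The identical computation on $G'$ yields multiplication by $\chi'(\omega')$, and since Theorem \ref{thm:3.1} pairs $N_\omega \leftrightarrow N_{\omega'}$ via $\Omega_G \cong \Omega_{G'}$, giving $\chi(\omega) = \chi'(\omega')$, the two automorphisms correspond. Assembling these matchings for every block yields the lemma.

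The main obstacle I expect is the Clifford-theoretic matching in the first step: the bijection \eqref{eq:3.4} has an inherent $\pi_+ \leftrightarrow \pi_-$ ambiguity whenever $\Omega_{G,\mf f,\tor}$ fixes an irreducible $\overline{\mc G_{\mf f}^\circ}(k_F)$-representation, so a consistent choice on both sides must transform equivariantly under tensoring by $\Omega_{G,\mf f,\tor}$-characters. This equivariance is supplied by the $\Omega_{G,\mf f}$-equivariance of \eqref{eq:2.3} used in the proof of Theorem \ref{thm:2.3}(a), together with the naturality of the Clifford construction under tensoring.
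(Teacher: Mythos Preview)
Your approach is correct and relies on the same underlying mechanism as the paper: tracking the effect of $\chi$ through the presentation \eqref{eq:3.3} via formula \eqref{eq:3.7}. The paper's proof, however, is organized differently. Since $|\Omega_G| \leq 2$ for ramified simple groups, the paper dispenses with your general isomorphisms $t_\chi$ and instead does a three-case analysis depending on $(|\Omega_{G,\mf f}|, |\Omega_{G,\mf f,\tor}|)$: when $\Omega_{G,\mf f}/\Omega_{G,\mf f,\tor}$ is nontrivial, $\chi$ stabilizes the block and compatibility is read off from \eqref{eq:3.7}; when $|\Omega_{G,\mf f}| = |\Omega_{G,\mf f,\tor}| = 2$, twisting swaps $\hat\sigma$ with $\chi \otimes \hat\sigma$ and the claim follows from the parallel structure on both sides; when $\Omega_{G,\mf f} = 1$, the progenerator itself is $\chi$-stable so every object of the block is. Your uniform framework would of course specialize to these cases, and the worry you flag about the Clifford ambiguity is not a genuine obstacle: the construction in Theorem~\ref{thm:2.3}(b) sends the pair $\{\pi_+,\pi_-\}$ to $\{\pi'_+,\pi'_-\}$ in a way that automatically intertwines tensoring by the nontrivial character of $\Omega_{G,\mf f,\tor}$, regardless of which labeling is chosen.
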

\begin{proof}
When $\Omega_G = 1$, also $\Omega_{G'} = 1$, all weakly unramified characters are trivial
and there is nothing to prove. 
Otherwise $|\Omega_G| = |\Omega_{G'}| = 2$. Then we identify the nontrivial weakly unramified 
character of $G$ with that of $G'$ and we call it $\chi$. There are three cases to consider:
\begin{itemize}
\item When $|\Omega_{G,\mf f} / \Omega_{G,\mf f,\tor}| = 2$, tensoring by $\chi$ stabilizes
the four categories in the first part of Corollary \ref{cor:3.2}. It is clear from \eqref{eq:3.7}
that its effect is compatible with the equivalences between these four categories.
\item When $|\Omega_{G,\mf f}| = |\Omega_{G,\mf f,\tor}| = 2$, tensoring by $\chi$ identifies
$\mc H (G,\hat P_{\mf f}, \hat \sigma)$ with $\mc H (G,\hat P_{\mf f}, \chi \otimes \hat \sigma)$
and $\mc H (G',\hat P_{\mf f'}, \hat \sigma')$ with $\mc H (G',\hat P_{\mf f'}, 
\chi \otimes \hat \sigma')$. If $\pi$ is mapped to $\pi'$, then by the complete analogy on both
sides ($G$ and $G'$), $\chi \otimes \pi$ is mapped to $\chi \otimes \pi'$.
\item When $|\Omega_{G,\mf f}| = |\Omega_{G,\mf f,\tor}| = 1$, the representations
$\mr{ind}_{\hat P_{\mf f}}^G (\hat \sigma)$ and $\mr{ind}_{\hat P_{\mf f'}}^{G'} (\hat \sigma')$
are unaffected by tensoring with $\chi$. These are progenerators of the categories
$\Rep (G)_{(\hat P_{\mf f}, \hat \sigma)}$ and $\Rep (G')_{(\hat P_{\mf f'}, \hat \sigma')}$,
so all elements of those categories are stable under tensoring by $\chi$. \qedhere
\end{itemize}

\end{proof}

Recall that any $\pi \in \Rep (G)$ is called essentially square-integrable if
its restriction to the derived group $G_\der$ is square-integrable. In particular this
forces $\pi$ to be admissible. For the definitions of various kinds of representations of
affine Hecke algebras we refer to \cite{SolComp}.

\begin{lem}\label{lem:3.3}
\enuma{
\item The equivalences of categories in Corollary \ref{cor:3.2} preserve temperedness
of representations.
\item The equivalence between the categories $\Rep (G)_{(\hat P_{\mf f}, \hat \sigma)}$ and
$\Rep (G')_{(\hat P_{\mf f'}, \hat \sigma')}$ preserves essential square-integrability.
}
\end{lem}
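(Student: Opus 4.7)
The plan is to pass both statements through the Hecke algebra description. For depth-zero types of Bernstein blocks, it is established in \cite{SolComp} that the equivalence $\pi \mapsto \Hom_{\hat P_{\mf f}}(\hat \sigma, \pi)$ from \eqref{eq:3.1} matches tempered $G$-representations with tempered modules over $\mc H (G, \hat P_{\mf f}, \hat\sigma)$, and essentially square-integrable $G$-representations with discrete series modules. The same applies to $G'$. Thus the lemma reduces to showing that the algebra isomorphism $\mc H (G', \hat P_{\mf f'}, \hat \sigma') \to \mc H (G, \hat P_{\mf f}, \hat \sigma)$ of Theorem \ref{thm:3.1}(c) preserves temperedness (for (a)) and the discrete series property (for (b)) of modules.

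To see this, recall that inside an affine Hecke algebra of the form $\mc H (W_\af, q^{\mc N}) \rtimes \Omega_{G,\mf f}/\Omega_{G,\mf f,\tor}$ appearing in \eqref{eq:3.3}, temperedness and the discrete series property of a finite-dimensional module are intrinsic notions: they are defined by the support of the weights for the Bernstein--Lusztig maximal commutative subalgebra (indexed by the translation lattice of $W_\af$) relative to the positive cone determined by $(W_\af, S_{\mf f,\af})$. By Theorem \ref{thm:3.1}(a)--(b), the isomorphism is induced by an isomorphism of Coxeter systems that identifies length functions and parameter functions; in particular it preserves the translation lattice and hence sends the Bernstein--Lusztig commutative subalgebra to its counterpart. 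The fact that some arrows in the affine Dynkin diagram may be reversed only replaces a based root datum by its dual, which preserves the tempered and discrete cones inside the corresponding real span. Finally, the twist by $\Omega_{G,\mf f}/\Omega_{G,\mf f,\tor}$ respects the extended structure in view of the formula \eqref{eq:3.7} and the identification \eqref{eq:3.6}. Composing the $G$-side equivalence of \eqref{eq:3.1}, the Hecke algebra isomorphism, and the inverse of the $G'$-side equivalence then yields both (a) and (b).

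The main subtlety is that the derived groups and centres of $\mc G$ and $\mc G'$ genuinely differ in some of the cases of Section \ref{sec:list}, so it is crucial that one works with temperedness and \emph{essential} square-integrability rather than with true square-integrability. Once one has transferred the comparison to the Hecke algebra side, the discrepancy between the centres of $G$ and $G'$ becomes immaterial, because the corresponding notion of discrete series for Hecke algebra modules is intrinsically defined modulo the centre of the Hecke algebra. The hard part of the argument is therefore not the final matching but the preliminary identification, quoted from \cite{SolComp}, of temperedness and essential square-integrability for $\Rep(G)_{(\hat P_{\mf f}, \hat \sigma)}$ with their Hecke-algebraic counterparts; this identification relies on normalizing traces and formal degrees compatibly with the volumes \eqref{eq:2.9}, and must be checked to go through for the (possibly ramified) groups considered here.
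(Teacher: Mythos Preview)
Your approach is essentially the paper's: reduce to the Hecke algebra via \cite{SolComp}, and then observe that the isomorphism of Theorem~\ref{thm:3.1}(c) transports all the defining data. For part~(a) your weight-cone argument is equivalent to the paper's phrasing via Schwartz completions.

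For part~(b), however, you are slightly too quick when you assert that \cite{SolComp} matches essentially square-integrable $G$-representations with discrete series Hecke modules. That correspondence in \cite{SolComp} requires the rank of the root system underlying $\mc H(G,\hat P_{\mf f},\hat\sigma)$ to equal the rank of the root system of $(G,L_{\mf f})$; without this hypothesis the implication can fail in one direction. The paper handles this by first noting (via \cite[Proposition~3.10.a]{SolComp}) that the existence of an essentially square-integrable representation in $\Rep(G)_{(\hat P_{\mf f},\hat\sigma)}$ forces the ranks to agree, then transporting this equality to $G'$ via $|\Delta_\af\setminus J_{\mf f}|-1=|\Delta'_\af\setminus J_{\mf f'}|-1$, and only then invoking \cite[Theorem~3.9]{SolComp}. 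You should insert this rank check; once you do, your argument and the paper's coincide. Your final paragraph about normalizing traces and volumes is a red herring here: those considerations enter in Lemma~\ref{lem:3.5} on formal degrees, not in the present lemma.
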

\begin{proof}
(a) The isomorphism from Theorem \ref{thm:3.1}.c comes from isomorphisms between all the data 
used to construct these affine Hecke algebras, so it extends to an isomorphism between their 
respective Schwartz completions. By definition \cite[\S 1]{SolComp}, this means that the middle 
map in Corollary \ref{cor:3.2} preserves temperedness.

For the two outer maps in Corollary \ref{cor:3.2} the statement is a consequence of 
\cite[Theorem 3.12 and Corollary 4.4]{SolComp}.\\
(b) Suppose that $\Rep (G)_{(\hat P_{\mf f}, \hat \sigma)}$ contains an essentially 
square-integrable representation $\pi$, necessarily of finite length. Then \cite[Proposition 
3.10.a and Corollary 4.4]{SolComp} tell us that the root systems for $(G,L_{\mf f})$ 
and for $\mc H (G,\hat P_{\mf f}, \hat \sigma)$ have the same rank. By isomorphism, the root
system underlying $\mc H (G',\hat P_{\mf f'}, \hat \sigma')$ also has that rank. The rank
of the root system of $(G,L_{\mf f})$ is simply $|\Delta_\af \setminus J_{\mf f}| - 1 =
|\Delta'_\af \setminus J_{\mf f'}| - 1$, so equal to the rank of the root system of
$(G',L'_{\mf f'})$. 

This argument works just as well from the other side: if
$\Rep (G')_{(\hat P_{\mf f'}, \hat \sigma')}$ contains an essentially square-integrable
representation, then the root systems underlying the four terms in Corollary \ref{cor:3.2}
all have the same rank. Knowing that, \cite[Theorem 3.9 and Corollary 4.4]{SolComp}
prove the statement.
\end{proof}

Let us investigate the effect of Corollary \ref{cor:3.2} on formal degrees of square-integrable
unipotent $G$-representations. Recall that we normalized the Haar measure on $G$ in 
\eqref{eq:2.9}. We endow the affine Hecke algebra $\mc H (G,\hat P_{\mf f}, \hat \sigma)$ with
the unique trace such that
\[
\mr{tr}(N_w) = \left\{ \begin{array}{ll}
N_e (1) = \dim (\hat \sigma) \mr{vol}(\hat P_{\mf f})^{-1} & w = e \\
0 & w \neq e
\end{array} \right. .
\]
The Plancherel decomposition of this trace \cite{Opd-Sp} determines a density
on the set of irreducible tempered $\mc H (G,\hat P_{\mf f}, \hat \sigma)$-representations,
and in particular provides a normalization of formal degrees.
Similarly we normalize the trace on $\mc H (G',\hat P_{\mf f'}, \hat \sigma')$ by
tr$(N_e) = \dim (\hat{\sigma}') \mr{vol}(\hat P_{\mf f'})^{-1}$, and we use the 
Plancherel density derived from that.

\begin{lem}\label{lem:3.5}
The equivalences of categories in Corollary \ref{cor:3.2} relate formal degrees 
of square-integrable representations as
\[
\mr{fdeg}(\pi') = q_F^{- \mb a (\mf g^\vee) / 2} \mr{fdeg}(\pi) .
\]
It multiplies Plancherel densities of irreducible tempered representations by the same
factor $q_F^{- \mb a (\mf g^\vee) / 2}$.
\end{lem}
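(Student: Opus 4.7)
The plan is to reduce everything to a single numerical comparison of the normalization constants of the traces on the two Hecke algebras. The crucial observation is that the algebra isomorphism of Theorem \ref{thm:3.1}.c sends $N_w$ to $N_w$, hence in particular $N_e \mapsto N_e$, so under this identification the two trace functionals differ only by the scalar
\[
\lambda \;=\; \frac{\dim(\hat\sigma) \mathrm{vol}(\hat P_{\mf f})^{-1}}{\dim(\hat\sigma') \mathrm{vol}(\hat P_{\mf f'})^{-1}} \;=\; \frac{\mathrm{vol}(\hat P_{\mf f'})}{\mathrm{vol}(\hat P_{\mf f})} \;=\; q_F^{\mathbf{a}(\mathfrak{g}^\vee)/2},
\]
where the second equality uses that Theorem \ref{thm:2.3}.b preserves dimensions (so $\dim\hat\sigma = \dim\hat\sigma'$), and the third equality is \eqref{eq:2.10} applied to $\hat P_{\mf f} = \Omega_{G,\mf f,\tor} P_{\mf f}$ together with the isomorphism $\Omega_{G,\mf f,\tor} \cong \Omega_{G',\mf f',\tor}$ from Section \ref{sec:list}.

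Next I would invoke the standard fact that the Plancherel decomposition of a trace on an affine Hecke algebra is linear in the trace: if $\tau$ is replaced by $\lambda\tau$, then the associated Plancherel measure on irreducible tempered modules, and hence all formal degrees of discrete series modules, are also multiplied by $\lambda$. Therefore, transporting the Plancherel data from $\mc H(G',\hat P_{\mf f'},\hat\sigma')$ to $\mc H(G,\hat P_{\mf f},\hat\sigma)$ via Theorem \ref{thm:3.1}.c scales Plancherel densities and formal degrees by $\lambda^{-1} = q_F^{-\mathbf{a}(\mathfrak{g}^\vee)/2}$.

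Finally, I would pass from the Hecke algebra side back to the group side. The choice of trace normalization $\mathrm{tr}(N_e) = \dim(\hat\sigma)\mathrm{vol}(\hat P_{\mf f})^{-1}$ is precisely the one for which the equivalence $\pi \mapsto \Hom_{\hat P_{\mf f}}(\hat\sigma,\pi)$ is an isometry for the respective Plancherel measures (cf. \cite{BuKu} and \cite{Opd-Sp}): formal degrees and Plancherel densities of tempered representations in $\Rep(G)_{(\hat P_{\mf f},\hat\sigma)}$ coincide with those of the corresponding $\mc H(G,\hat P_{\mf f},\hat\sigma)$-modules, and analogously on the $G'$-side. Combining this with the previous paragraph yields the asserted identity $\mathrm{fdeg}(\pi') = q_F^{-\mathbf{a}(\mathfrak{g}^\vee)/2} \mathrm{fdeg}(\pi)$, and the same scaling for Plancherel densities of irreducible tempered representations.

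The only nontrivial point, and the one I would be most careful to spell out, is that the equivalences of categories in Corollary \ref{cor:3.2} really do transport Plancherel measure correctly under the chosen Haar measure normalization \eqref{eq:2.11}; here the extra factor $q_F^{-\mathbf{a}(\mathfrak{g}^\vee)/2}$ in \eqref{eq:2.11} on the $G$-side (absent for the $F_\nr$-split group $G'$) is exactly what produces the volume ratio $\mathrm{vol}(\hat P_{\mf f'})/\mathrm{vol}(\hat P_{\mf f}) = q_F^{\mathbf{a}(\mathfrak{g}^\vee)/2}$ computed above. Once this is in place, no further case analysis is needed.
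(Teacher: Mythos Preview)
Your proposal is correct and follows essentially the same route as the paper's proof: use \eqref{eq:2.10} and the dimension preservation in Theorem~\ref{thm:2.3}.b to compute the ratio of trace normalizations, observe that the Plancherel decomposition scales linearly with the trace, and invoke the fact that the outer equivalences preserve formal degrees (the paper cites \cite{BHK} for this last step rather than \cite{BuKu} and \cite{Opd-Sp}, but these contain the same result). Your explicit computation of $\lambda$ and the remark about $\hat P_{\mf f} = \Omega_{G,\mf f,\tor} P_{\mf f}$ spell out a detail the paper leaves implicit, but the argument is the same.
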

\begin{proof}
By \cite{BHK} the two outer maps in Corollary \ref{cor:3.2}, with the 
indicated normalizations, preserve formal degrees.
By Theorem \ref{thm:2.3}.b and \eqref{eq:2.10} the Hecke algebra isomorphism from 
Theorem \ref{thm:3.1}.c multiplies the traces by a factor $q_F^{- \mb a (\mf g^\vee) / 2}$. 
Hence it adjusts formal degrees by the same factor.

The same argument applies to Plancherel densities.
\end{proof}

Finally we consider the diagram automorphism $\tau$ of $\mc G (F)$ from page
\pageref{lem:2.5}. By Lemma \ref{lem:2.5} it stabilizes $\hat P_{\mf f}$ and $\hat \sigma$,
so it acts canonically on $\mc H (G,\hat P_{\mf f},\hat \sigma)$ by an algebra
automorphism.

\begin{lem}\label{lem:3.6}
For every type $(\hat P_{\mf f},\hat \sigma)$ as above, the action of $\tau$ on
$\mc H (G,\hat P_{\mf f},\hat \sigma)$ is the identity. Hence $\tau$ stabilizes all
unipotent representations of $G$.
\end{lem}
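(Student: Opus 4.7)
\textbf{Proof plan for Lemma \ref{lem:3.6}.}
The strategy is to use the explicit presentation \eqref{eq:3.3} of $\mc H (G,\hat P_{\mf f},\hat \sigma)$ as an Iwahori--Hecke algebra $\mc H (W_\af (J_{\mf f},\sigma),q^{\mc N})$ crossed with $\Omega_{G,\mf f}/\Omega_{G,\mf f,\tor}$, and verify that $\tau$ fixes each of the two families of standard basis elements $N_w$ ($w \in W_\af (J_{\mf f},\sigma)$) and $N_\omega$ ($\omega \in \Omega_{G,\mf f}/\Omega_{G,\mf f,\tor}$). By Lemma \ref{lem:2.5}, $\tau$ acts trivially on the local index of $\mc G$, so it also acts trivially on $\Delta_{\nr,\af}$, on $J_{\mf f}$ and on $\Delta_{\mf f,\af}$. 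Since $(W_\af (J_{\mf f},\sigma), S_{\mf f,\af})$ and the parameter function $q^{\mc N}$ depend only on this combinatorial data (as recalled in the proof of Theorem \ref{thm:3.1}), $\tau$ fixes the Coxeter system pointwise; similarly, it acts trivially on $\Omega_{G,\mf f}/\Omega_{G,\mf f,\tor}$, which is identified via the Kottwitz map with a quotient determined by the local index.

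For the generators $N_s$ with $s \in S_{\mf f,\af}$, one reads off from the Bernstein--style presentation of \eqref{eq:3.9} that each $N_s$ is characterized by the quadratic relation \eqref{eq:3.2}, the braid relations \eqref{eq:3.5} and the requirement that it arise from a specific intertwining operator on $\mr{ind}_{\hat P_{\mf f}}^G \hat \sigma$ coming from the parahoric pair associated with $s$. Since $\tau$ stabilises $\hat P_{\mf f}$ and $\hat \sigma$ (Lemma \ref{lem:2.5}) and preserves a pinning by definition, it fixes this intertwining datum, so $\tau (N_s)=N_s$. For the generators $N_\omega$, formula \eqref{eq:3.7} shows that $N_\omega$ is determined by the action of an element $\omega\in N_G (P_{\mf f})$ and by the extension $\sigma^N$; by Lemma \ref{lem:2.5} both are fixed (up to elements of $\hat P_{\mf f}$, which act trivially on the Hecke algebra), so $\tau (N_\omega)=N_\omega$.

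Having verified this on generators, the induced algebra automorphism of $\mc H (G,\hat P_{\mf f},\hat \sigma)$ is the identity. The second statement is then immediate: every $\pi \in \Irr_\unip (G)$ lies in some Bernstein block $\Rep (G)_{(\hat P_{\mf f}, \hat \sigma)}$ and corresponds under \eqref{eq:3.1} to an irreducible $\mc H (G,\hat P_{\mf f}, \hat \sigma)$-module. Because $\tau$ fixes $(\hat P_{\mf f}, \hat \sigma)$ (Lemma \ref{lem:2.5}), its action on $\pi$ translates under \eqref{eq:3.1} to its action on $\Hom_{\hat P_{\mf f}}(\hat \sigma,\pi)$, which is trivial since $\tau$ acts trivially on the Hecke algebra. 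Hence $\tau$ stabilises $\pi$.

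The main obstacle is the rigidity statement for the generators $N_s$: although $\tau$ trivially preserves the local index and hence the combinatorial skeleton of the Hecke algebra, one must argue that the geometric realisation of each $N_s$ as an element of $\mr{End}_G (\mr{ind}_{\hat P_{\mf f}}^G \hat \sigma )$, and not merely its class up to an automorphism of $\mc H$, is fixed by $\tau$. This is where the pinning-preservation property of $\tau$ and the fact that $\tau$ acts on $\overline{\mc G^\circ_{\mf f}}$ by an inner $k_F$-automorphism (from the proof of Lemma \ref{lem:2.5}) enter in an essential way.
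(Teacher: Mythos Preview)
Your overall architecture matches the paper's: reduce to the generators $N_s$ and $N_\omega$ of the presentation \eqref{eq:3.3}, check that $\tau$ fixes each, and then invoke \eqref{eq:3.1} to deduce the statement about representations. Your treatment of $N_\omega$ via \eqref{eq:3.7} and Lemma \ref{lem:2.5} is essentially the paper's argument (the paper phrases it contrapositively: since $N_\omega^2 = N_e$ one has $\tau(N_\omega) = \pm N_\omega$, and the minus sign would force $\tau$ to swap the two extensions $\sigma_\pm$, contradicting Lemma \ref{lem:2.5}).

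The one substantive divergence is your handling of $N_s$, and this is exactly the point you flag as the ``main obstacle''. You try to pin down $N_s$ by appealing to its geometric origin as a specific intertwining operator and arguing that $\tau$, being pinning-preserving and inducing an inner $k_F$-automorphism on $\overline{\mc G^\circ_{\mf f}}$, must fix that operator on the nose. This is plausible but not fully justified as written: the normalization of $N_s$ is fixed by a convention (a choice of eigenvalue), and one still has to argue that an automorphism acting by an element of $(\overline{\mc G^\circ_{\mf f}})_\ad (k_F)$ on the finite quotient preserves that normalization when lifted back to $\mr{End}_G (\mr{ind}_{\hat P_{\mf f}}^G \hat\sigma)$. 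The paper bypasses this entirely with a two-line algebraic trick: since $\tau$ fixes the local index and hence each double coset $\hat P_{\mf f} w \hat P_{\mf f}$, one has $\tau(N_s) \in \C N_s$; then applying $\tau$ to the quadratic relation \eqref{eq:3.2} together with $\tau(N_e) = N_e$ forces the scalar to be $1$. This is both shorter and avoids any appeal to the fine structure of the intertwiner. I would recommend replacing your geometric argument for $N_s$ with this one.
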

\begin{proof}
By Lemma \ref{lem:2.5} $\tau$ fixes the identity element $N_e$ of 
$\mc H (G,\hat P_{\mf f},\hat \sigma)$, and it fixes $N_G (P_{\mf f}) / P_{\mf f} \cong
\Omega_{G,\mf f}$ pointwise. Further, we observed in the proof of Lemma \ref{lem:2.5}
that $\tau$ acts trivially on the local index of $G$. Together with $\mf f$, these objects
determine $W_\af (J_{\mf f},\sigma) \rtimes \Omega_{G,\mf f}$. Hence $\tau$ stabilizes 
the double coset $\hat P_{\mf f} w \hat P_{\mf f}$, for every 
$w \in W_\af (J_{\mf f},\sigma)\rtimes \Omega_{G,\mf f} / \Omega_{G,\mf f,\tor}$.

In particular $\tau (N_s) \in \C N_s$ for every $s \in S_{\mf f,\af}$. The quadratic
relation \eqref{eq:3.2} and $\tau (N_e) = N_e$ force $\tau (N_s) = N_s$. Hence $\tau$
is the identity on $\mc H (W_\af (J_{\mf f},\sigma), q^{\mc N})$.

Suppose that $N_G (P_{\mf f}) / P_{\mf f} = \{1, \omega\}$ and $\sigma \in 
\Irr_\unip (P_{\mf f})$. Then $\sigma$ can be extended in two ways to $N_G (P_{\mf f})$,
differing by a character of $N_G (P_{\mf f}) / P_{\mf f}$. In the proof of Lemma \ref{lem:2.5}
we saw that $\tau$ stabilizes the two extensions $\sigma_+,\sigma_-$. On the other hand,
if $\tau (N_\omega) = - N_\omega$, then $\tau$ would exchange $\sigma_+$ and $\sigma_-$.
As $N_\omega^2 = N_e$, we conclude that $\tau (N_\omega) = N_\omega$. Now we see from 
\eqref{eq:3.3} that $\tau$ fixes $\mc H (G,\hat P_{\mf f},\hat \sigma)$ entirely.

Then \eqref{eq:3.1} implies that $\tau$ stabilizes all elements of $\Rep (G)_{(\hat P_{\mf f}, 
\hat \sigma)}$. This holds for all types $(\hat P_{\mf f}, \hat \sigma)$, so by
\eqref{eq:5} for the whole of $\Rep (G)_\unip$.
\end{proof}

Let $\mc P$ be a standard parabolic $F$-subgroup of $\mc G$ and let $\mc P'$ be the 
associated standard parabolic $F$-subgroup of $\mc G'$, as explained in Section 
\ref{sec:list}. Let $\mc M$ and $\mc M'$ be their respective standard Levi factors. 
Then $\mc M$ and $\mc M'$ stand in the same relation to each other as $\mc G$ and $\mc G'$,
except that they need not be simple or adjoint.

When $\mc M$ contains $\mc L_{\mf f}$, \cite[Theorem 2.1]{Mor3} says that
\[
(\hat P_{\mf f} \cap L_{\mf f}) / (P_{\mf f} \cap L_{\mf f}) \cong
(\hat P_{\mf f} \cap M) / (P_{\mf f} \cap M) \cong \hat P_{\mf f} / P_{\mf f} .
\]
Then $\hat \sigma$ can also be considered as a representation of 
$\hat P_{\mf f} \cap L_{\mf f}$ or of $\hat P_{M,\mf f} := \hat P_{\mf f} \cap M$. 
Thus Theorem \ref{thm:2.3} induces a bijection
\begin{equation}\label{eq:3.10}
\Irr (\hat P_{M,\mf f})_{\cusp,\unip} \to \Irr (\hat P_{M',\mf f'})_{\cusp,\unip} :
\hat \sigma \mapsto \hat \sigma' .
\end{equation}
We construct an equivalence of categories
\[
\Rep (M)_{(\hat P_{M,\mf f}, \hat \sigma)} \longleftrightarrow
\Rep (M')_{(\hat P_{M',\mf f'}, \hat \sigma')} 
\]
as in Corollary \ref{cor:3.2}. Let 
\[
\mr{pr}_{(\hat P_{M,\mf f}, \hat \sigma)} : \Rep (M) \to \Rep (M)_{(\hat P_{M,\mf f}, \hat \sigma)} 
\]
be the natural projection coming from the Bernstein decomposition. We denote the normalized
parabolic induction functor and the normalized Jacquet restriction functor associated to $P$ by
\[
I_P^G : \Rep (M) \to \Rep (G) \quad \text{and} \quad J_P^G : \Rep (G) \to \Rep (M).
\]
Let $\overline{\mc P}$ be the parabolic $F$-subgroup of $\mc G$ which is opposite to $\mc P$
with respect to $\mc M$.

\begin{lem}\label{lem:3.4}
The following diagrams commute:
\[
\begin{array}{ccc@{\hspace{6mm}}ccc}
\Rep (G)_{(\hat P_{\mf f}, \hat \sigma)} & \!\!\! \longleftrightarrow \!\!\! &
\Rep (G')_{(\hat P_{\mf f'}, \hat \sigma')} & \Rep (G)_{(\hat P_{\mf f}, \hat \sigma)} & 
\!\!\! \longleftrightarrow \!\!\! & \Rep (G')_{(\hat P_{\mf f'}, \hat \sigma')} \\
\uparrow I_P^G & & \uparrow I_{P'}^{G'} & 
\downarrow \mr{pr}_{(\hat P_{M,\mf f}, \hat \sigma)} \circ J^G_{\overline P} & & 
\downarrow \mr{pr}_{(\hat P_{M',\mf f'}, \hat \sigma')} \circ J^{G'}_{\overline{P'}} \\
\!\! \Rep (M)_{(\hat P_{M,\mf f}, \hat \sigma)} & \!\!\! \longleftrightarrow \!\!\! &
\Rep (M')_{(\hat P_{M',\mf f'}, \hat \sigma')} & \Rep (M)_{(\hat P_{M,\mf f}, \hat \sigma)} & 
\!\!\! \longleftrightarrow \!\!\! & \Rep (M')_{(\hat P_{M',\mf f'}, \hat \sigma')} 
\end{array}
\]
\end{lem}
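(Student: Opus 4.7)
The plan is to reduce both diagrams to a single commutative square of Hecke algebra homomorphisms, via Bushnell--Kutzko cover theory. By \cite{Mor3} (as applied in \cite[\S 3]{SolLLC}), the pair $(\hat P_{\mf f}, \hat \sigma)$ is a $G$-cover of the $M$-type $(\hat P_{M,\mf f}, \hat \sigma)$. This provides a canonical injective algebra homomorphism
\[
j_P : \mc H (M, \hat P_{M,\mf f}, \hat \sigma) \longrightarrow \mc H (G, \hat P_{\mf f}, \hat \sigma)
\]
such that, under the Morita equivalence \eqref{eq:3.1}, the normalized parabolic induction $I_P^G$ intertwines with $\mc H (G,\ldots) \otimes_{\mc H (M,\ldots)} (-)$, while $\mr{pr}_{(\hat P_{M,\mf f}, \hat \sigma)} \circ J^G_{\overline P}$ intertwines with restriction of modules along $j_P$. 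The same holds for $G'$ and $M'$, yielding $j_{P'}$.

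Next, I would verify that the isomorphism of Theorem \ref{thm:3.1}.c intertwines $j_P$ with $j_{P'}$. Concretely, $j_P$ is described explicitly on the basis $\{ N_w \}$: the image of $\mc H (M,\ldots)$ is the Iwahori--Hecke algebra of the sub-Coxeter system of $(W_\af (J_{\mf f},\sigma), S_{\mf f,\af})$ corresponding to the subset $\Delta_{\mf f,\af,M} \subset \Delta_{\mf f,\af}$ indexed by the simple affine roots of $\mc M$, extended by the relevant subgroup of $\Omega_{G,\mf f} / \Omega_{G,\mf f,\tor}$. Under the bijection $\Delta_{\nr,\af} \leftrightarrow \Delta'_{\nr,\af}$ of Section \ref{sec:list}, which by construction sends $\mc M$ to $\mc M'$, the subset $\Delta_{\mf f,\af,M}$ corresponds to $\Delta_{\mf f',\af,M'}$, the parameters $q^{\mc N}$ match by Theorem \ref{thm:3.1}.b, and the $\Omega$-actions are matched by the $\Omega$-equivariance of the bijection. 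Hence the square
\[
\begin{array}{ccc}
\mc H (M', \hat P_{M',\mf f'}, \hat \sigma') & \longrightarrow &
\mc H (M, \hat P_{M,\mf f}, \hat \sigma) \\
\downarrow \; j_{P'} & & \downarrow \; j_P \\
\mc H (G', \hat P_{\mf f'}, \hat \sigma') & \longrightarrow &
\mc H (G, \hat P_{\mf f}, \hat \sigma)
\end{array}
\]
commutes, where the horizontal arrows are the isomorphisms of Theorem \ref{thm:3.1}.c.

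Commutativity of this square, together with the naturality of induction and restriction of modules under algebra isomorphisms, immediately yields commutativity of both diagrams in the statement: the left diagram follows from naturality of $\mc H (G,\ldots) \otimes_{\mc H (M,\ldots)} (-)$, and the right diagram from naturality of restriction along $j_P$. The main obstacle I expect is pinning down the realization of $\mr{pr}_{(\hat P_{M,\mf f}, \hat \sigma)} \circ J^G_{\overline P}$ as restriction along $j_P$ with enough precision to combine with the generator-by-generator description of the isomorphism in Theorem \ref{thm:3.1}; fortunately this is essentially the content of the cover theory set up in \cite{Mor3}. Once that is in hand, the only remaining subtlety is the semidirect factor $\Omega_{G,\mf f}/\Omega_{G,\mf f,\tor}$, and there \eqref{eq:3.7} shows that the chosen extension of $\hat \sigma$ enters in the same way on both sides, so the two $N_\omega$'s correspond under the isomorphism.
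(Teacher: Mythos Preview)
Your proposal is correct and follows essentially the same route as the paper: reduce to a commuting square of Hecke algebra embeddings via Bushnell--Kutzko cover theory, then observe that the embedding $j_P$ (the paper calls it $\lambda_{MG}$, obtained from $t_{\overline P}$ of \cite[Corollary 7.12]{BuKu} after a modular twist) is given by $N_w \mapsto N_w$ on the relevant subgroup and is therefore intertwined by the isomorphism of Theorem \ref{thm:3.1}.c. The paper phrases the image of $\lambda_{MG}$ via the condition $\hat P_{\mf f} w \hat P_{\mf f} \cap M \neq \emptyset$ rather than as a ``sub-Coxeter system'' (your description is slightly imprecise here, since the affine Weyl group for $M$ need not be a parabolic subgroup of that for $G$), and it cites \cite[Condition 3.1 and Lemma 4.1]{SolComp} for the precise identification of $I_P^G$ and $\mathrm{pr} \circ J^G_{\overline P}$ with induction and restriction along $\lambda_{MG}$; but the logical structure is the same.
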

\begin{proof}
By \cite[Corollary 3.10]{Mor3} and \cite[Proposition 8.5]{BuKu} the type  
$(\hat P_{\mf f}, \hat \sigma)$ is a cover of $(\hat P_{M,\mf f}, \hat \sigma)$. In this setting
\cite[Corollary 7.12]{BuKu} gives a canonical algebra monomorphism
\[
t_{\overline P} : \mc H (M,\hat P_{M,\mf f},\hat \sigma) \to \mc H (G,\hat P_{\mf f}, \hat \sigma) ,
\]
which implements unnormalized Jacquet restriction with respect to $\overline P$ 
\cite[Corollary 8.4]{BuKu}. We adjust it by the square root of a modular character as in 
the proof of \cite[Lemma 4.1]{SolComp}, and call the result $\lambda_{MG}$. In terms of 
the presentation from \eqref{eq:3.2} and \eqref{eq:3.5}, this works out as
\[
\lambda_{MG} (N_w) = N_w \qquad \text{for all } w \in W_\af (J_{\mf f},\sigma) \rtimes
\Omega_{G,\mf f} / \Omega_{G,\mf f,\tor} \text{ with } \hat P_{\mf f} w \hat P_{\mf f} \cap M
\neq \emptyset .
\]
Via $\lambda_{MG}$ we regard $\mc H (M,\hat P_{M,\mf f},\hat \sigma)$ as a subalgebra of 
$\mc H (G,\hat P_{\mf f}, \hat \sigma)$. Then \cite[Condition 3.1 and Lemma 4.1]{SolComp}
say that restriction of representations from $\mc H (G,\hat P_{\mf f}, \hat \sigma)$ to
$\mc H (M,\hat P_{M,\mf f}, \hat \sigma)$ fits in a commutative diagram 
\begin{equation}\label{eq:3.8}
\begin{array}{ccc}
\Rep (G)_{(\hat P_{\mf f}, \hat \sigma)} & 
\longleftrightarrow & \Mod ( \mc H (G,\hat P_{\mf f}, \hat \sigma) ) \\ 
\downarrow \mr{pr}_{(\hat P_{M,\mf f}, \hat \sigma)} \circ J^G_{\overline P} & & 
\downarrow \mr{Res} \\
\Rep (M)_{(\hat P_{M,\mf f}, \hat \sigma)} & 
\longleftrightarrow & \Mod ( \mc H (M,\hat P_{M,\mf f}, \hat \sigma) ) 
\end{array}.
\end{equation}
The same holds if the vertical arrows are replaced by $I_P^G$ and by induction from 
$\mc H (M,\hat P_{M,\mf f}, \hat \sigma)$ to $\mc H (G,\hat P_{\mf f}, \hat \sigma)$.

Of course that applies equally well to $G'$ and $M'$. Clearly the Hecke algebra isomorphism 
from Theorem \ref{thm:3.1}.c transfers $\lambda_{MG}$ to $\lambda_{M' G'}$. Hence the diagram
\[
\begin{array}{ccc}
\Mod ( \mc H (G,\hat P_{\mf f}, \hat \sigma) ) & 
\longleftrightarrow & \Mod ( \mc H (G',\hat P_{\mf f'}, \hat \sigma') ) \\ 
\downarrow \mr{Res} & & 
\downarrow \mr{Res} \\
\Mod ( \mc H (M,\hat P_{M,\mf f}, \hat \sigma) ) & 
\longleftrightarrow & \Mod ( \mc H (M',\hat P_{M',\mf f'}, \hat \sigma') ) 
\end{array}
\]
commutes, and similarly with the vertical arrows replaced by induction functors.
\end{proof}

In particular Lemma \ref{lem:3.4} shows that Corollary \ref{cor:3.2} respects supercuspidality
-- which we knew already from Corollary \ref{cor:2.4}.

\section{Comparison of Langlands parameters}
\label{sec:Lpar}

For the moment, $\mc G$ denotes any connected reductive $F$-group, and $G = \mc G (F)$.
Let $G^\vee$ be the complex dual group of $\mc G$ and let ${}^L G = G^\vee \rtimes \mb W_F$
be a Langlands dual group. Recall \cite{Bor} that a Langlands parameter for $G$ is a
homomorphism $\phi : \mb W_F \times SL_2 (\C) \to {}^L G$ satisfying certain conditions.
We denote the set of $G^\vee$-equivalence classes of Langlands parameters for $G$ by $\Phi (G)$.
We call $\phi$:
\begin{itemize}
\item bounded if $\phi (\Fr) = (s,\Fr)$ with $s$ in a compact subgroup of $G^\vee$;
\item discrete if the image of $\phi$ is not contained in ${}^L M$ for any proper $F$-Levi
subgroup $\mc M$ of $\mc G$;
\item unramified if $\phi (i) = (1,i)$ for all $i \in \mb I_F$.
\end{itemize}
We denote the corresponding subsets of $\Phi (G)$ by, respectively, $\Phi_{\mr{bdd}}(G),
\Phi^2 (G)$ and $\Phi_\nr (G)$. We note that an unramified L-parameter is determined up to
$G^\vee$-conjugacy by the semisimple element $\phi (\Fr) \in {}^L G$ and the unipotent 
element
\[
u_\phi := \phi \big( 1, \matje{1}{1}{0}{1} \big) \in G^\vee . 
\]
Let ${G^\vee}_\Sc$ be the simply connected cover of the derived group ${G^\vee}_\der$ of
$G^\vee$. The image of $Z_{G^\vee}(\phi)$ in ${G^\vee}_\der$ is 
$Z_{G^\vee}(\phi) Z(G^\vee) / Z(G^\vee)$. Let $Z^1_{{G^\vee}_\Sc}(\phi)$ be the preimage
of that in ${G^\vee}_\Sc$ and define
\[
\mc S_\phi := \pi_0 \big( Z^1_{{G^\vee}_\Sc}(\phi) \big) .
\]
This is the component group of $\phi$ used in \cite{Art,AMS1}. An enhancement of $\phi$
is an irreducible representation $\rho$ of $\mc S_\phi$. The group $G^\vee$ acts naturally
on the set of enhanced L-parameters by
\[
g \cdot (\phi,\rho) = (g \phi g^{-1}, g \cdot \rho) \qquad
(g \cdot \rho)(h) = \rho (g^{-1} h g) .
\]
Via the canonical map $Z({G^\vee}_\Sc) \to Z(\mc S_\phi)$, every enhancement $\rho$
determines a character $\chi_\rho$ of $Z({G^\vee}_\Sc)$. On the other hand, $\mc G$ is an
inner twist of a unique quasi-split $F$-group $\mc G^*$. The parametrization of equivalence
classes of inner twists of $\mc G^*$ by 
\[
H^1 (\mb W_F, \mc G^*_\ad) \cong \Irr \big( Z ({G^\vee}_\Sc)^{\mb W_F} \big)
\]
provides a character $\chi_{\mc G}$ of $Z ({G^\vee}_\Sc)^{\mb W_F}$. We choose an extension 
to a character $\chi_{\mc G}^e$ of $Z({G^\vee}_\Sc)$. (Such an extension is related to an
explicit construction of $\mc G$ as inner twist of $\mc G^*$, compare with Section 
\ref{sec:rigid}.) Then we say that
$\rho$ or $(\phi,\rho)$ is $G$-relevant if $\chi_\rho = \chi_{\mc G}^e$. We denote the
collection of $G^\vee$-orbits of $G$-relevant enhanced L-parameters by $\Phi_e (G)$.

When $\mc G$ is ramified $F$-simple and $\mc G' = \mc G'^\circ \times \{\pm 1\}$, we have
$Z(G'^{\vee,\circ}) = 1$ and $Z(G'^\vee) = \{\pm 1\}$. Then we define 
$\chi_{\mc G'} \in \Irr (Z(G'^\vee))$ to
be trivial if $\mc G$ is quasi-split over $F$ and nontrivial otherwise.

\begin{lem}\label{lem:4.1}
Let $\mc G$ be a ramified simple $F$-group and $\mc G'$ be its companion group
from Section \ref{sec:list}. There exists a $\mb W_F$-equivariant isomorphism
\[
\lambda^\vee_{G' G} : G'^\vee \to (G^\vee)^{\mb I_F}, 
\]
which is unique up to inner automorphisms.
\end{lem}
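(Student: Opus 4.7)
\smallskip
\noindent\textbf{Proof proposal.}
The plan is to reduce the lemma to a finite case-by-case verification using the list in Section \ref{sec:list}. Since $\mc G$ is simple and splits over a finite Galois extension $E'/F$, the action of $\mb I_F$ on $G^\vee$ (coming from the $F$-structure, hence fixing a chosen pinning) factors through the finite quotient $\mr{Gal}(E'/F_\nr \cap E')$ and acts by pinned automorphisms of the Langlands dual group. By a theorem of Steinberg, the fixed-point subgroup $(G^\vee)^{\mb I_F}$ is then a (possibly disconnected) reductive group whose root datum is obtained by folding the root datum of $G^\vee$ along the orbits of $\mb I_F$ on the absolute Dynkin diagram.

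Next I would identify $(G^\vee)^{\mb I_F}$ in each paragraph \ref{par:B-Cn}--\ref{par:G2I} using the standard folding recipes: the outer involution of type $A_{2n-1}$ yields $Sp_{2n}$; of type $A_{2n}$ yields $SO_{2n+1}$; the order-two outer automorphism of $D_n$ yields the expected $B_{n-1}$-type group; the triality of $D_4$ yields a group of type $G_2$; and the order-two outer automorphism of $E_6$ yields a group of type $F_4$. Comparing these with the dual of the companion group $\mc G'$ listed in Section \ref{sec:list}, one sees that the Lie types agree. For the isogeny class and for the component group of $(G^\vee)^{\mb I_F}$, one uses the description $\Omega_G \cong \Irr\bigl( (Z(G^\vee)^{\mb I_F})_\Fr \bigr)$ combined with the condition $(Z(G^\vee)^{\mb I_F})_\Fr \cong (Z(G'^\vee)^{\mb I_F})_\Fr$ imposed on $\mc G'$ in Section \ref{sec:list}. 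In particular, the disconnected cases arise precisely when $(G^\vee)^{\mb I_F}$ has a nontrivial component group, matching the cases where $\mc G' = \mc G'^\circ \times \{\pm 1\}$; this is exactly the motivation indicated at the end of Section \ref{sec:list}.

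For $\mb W_F$-equivariance, it suffices to check compatibility of the Frobenius actions, since $\mb W_F = \mb I_F \rtimes \langle \Fr \rangle$ and $\mb I_F$ acts trivially on both sides after passing to invariants. On $(G^\vee)^{\mb I_F}$ the action of $\Fr$ is inherited from ${}^L G$, while on $G'^\vee$ it is determined by the $F$-structure of $\mc G'$. By construction of $\mc G'$ its local index and the $\Omega_G$-action on the relative local Dynkin diagram agree with those of $\mc G$ (up to reversing arrows, which does not affect the dual side after folding), so the two Frobenius actions match on the level of based root data with pinning. For uniqueness, any two pinned isomorphisms of reductive groups with identified based root data differ by an element of the torus, and any isomorphism preserving the Lie structure differs from a pinned one by an inner automorphism; hence $\lambda^\vee_{G'G}$ is unique up to inner automorphisms.

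The main obstacle is the bookkeeping in paragraphs \ref{par:2B-Cn}--\ref{par:2C-Bodd}, where the non-quasi-split groups $\mc G$ force the ramified Weil-group action on $G^\vee$ to combine a pinned $\mb I_F$-action with a nontrivial inner-twist class, and one has to verify that the centre $Z(G^\vee)^{\mb I_F}$ (in particular its component group) matches $Z(G'^\vee)$ precisely enough to pin down the isogeny class of $\mc G'$ fixed in Section \ref{sec:list}, including the occasional $\{\pm 1\}$-factor. Once this is done case-by-case, the remaining assertions follow from the general folding machinery.
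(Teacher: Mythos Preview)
Your approach is essentially the same as the paper's --- a case-by-case check across the list in Section~\ref{sec:list} --- and it would work. Two points are worth noting where the paper's argument is sharper than yours.

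First, your treatment of $\mb W_F$-equivariance is more elaborate than necessary. The paper observes at the outset that $\mb W_F$ acts \emph{trivially} on both $G'^\vee$ and $(G^\vee)^{\mb I_F}$: since $\mc G$ over $F$ is an inner twist of the quasi-split form over $F_\nr$, the Frobenius action on $G^\vee$ is inner (hence trivial on the $\mb I_F$-invariants), and the same holds for $G'^\vee$ because $\mc G'$ is likewise an inner form of a split group. So equivariance is automatic, and there is no need to match Frobenius actions via the local index. This also dissolves what you flag as the ``main obstacle'': the non-quasi-split cases \ref{par:2B-Cn}--\ref{par:2C-Bodd} are no harder than the quasi-split ones, because only the $\mb I_F$-action enters the computation of $(G^\vee)^{\mb I_F}$, and the inner-twist data affects neither side.

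Second, for uniqueness the paper uses a one-line argument: any two isomorphisms $G'^\vee \to (G^\vee)^{\mb I_F}$ differ by an automorphism of $G'^\vee$, and since $G'^\vee$ has Lie type $B_n$, $C_n$, $F_4$ or $G_2$ (possibly times $\{\pm 1\}$), all its automorphisms are inner. Your pinned-isomorphism reasoning is correct but more circuitous.
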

\begin{proof}
This boils down to one quick check for every entry in the list in Section \ref{sec:list}. In all
cases $\mb W_F$ acts trivially on $(G^\vee)^{\mb I_F}$ (and on $G'^\vee$) because $\mc G$
is an inner twist of the quasi-split $F$-group given by $\mc G (F_\nr)$.
\begin{itemize}
\item $\mathbf{B\mi C_n}$. Let $A^{-T}$ be the inverse transpose of an invertible matrix
and let $J$ be an anti-diagonal square matrix whose nonzero entries are alternatingly 
1 and -1. Then $\mb I_F$ acts on $G^\vee = SL_{2n}(\C)$ via $A \mapsto J A^{-T} J^{-1}$ and
\[
(G^\vee)^{\mb I_F} = SL_{2n}(\C)^{\mb I_F} = Sp_{2n}(\C) = G'^\vee . 
\]
The same $\mb I_F$-action is well-defined on any group $G^\vee$ isogenous to $SL_{2n}(\C)$. 
Such a group is determined by the order $d^\vee$ of $Z(G^\vee)$. Then $2n / d^\vee$ is the
order of the schematic centre of $\mc G$. We find
\[
(G^\vee)^{\mb I_F} = G'^\vee = \left\{ \begin{array}{ll}
Sp_{2n}(\C) & \text{if } 2n / d^\vee \text{ is odd} \\
PSp_{2n}(\C) \times \{\pm 1\}& \text{if } 2n / d^\vee \text{ is even and } d^\vee \text{ is even} \\
PSp_{2n}(\C) & \text{if } 2n / d^\vee \text{ is even and } d^\vee \text{ is odd} 
\end{array}
\right.
\]
\item $\mathbf{C\mi BC_n}$. Similarly to the previous case, for every group $G^\vee$ isogenous
to $SL_{2n+1}(\C)$:
\[
(G^\vee)^{\mb I_F} = SO_{2n+1}(\C) = G'^\vee .
\]
\item $\mathbf{C\mi B_n}$. We endow $\C^{2n+2}$ with the symmetric bilinear form given by\\
$\langle e_i, e_j \rangle = \delta_{j,2n+3-i}$.
We let $\mb I_F$ act on $G^\vee = Spin_{2n+2}(\C)$ via conjugation by
$I_n \oplus \matje{0}{1}{1}{0} \oplus I_n \in O_{2n+2}(\C)$. There are three cases:
\[
\begin{array}{ccccccc}
(PSO_{2n+2}^{*,\vee})^{\mb I_F} & = & Spin_{2n+2}(\C)^{\mb I_F} & = & Spin_{2n+1}(\C) & = & G'^\vee ,\\
(SO_{2n+2}^{*,\vee})^{\mb I_F} & = & SO_{2n+2}(\C)^{\mb I_F} & = & O_{2n+1}(\C) & = & G'^\vee ,  \\
(Spin_{2n+2}^{*,\vee})^{\mb I_F} & = & PSO_{2n+2}(\C)^{\mb I_F} & = & SO_{2n+1}(\C) & = & G'^\vee .
\end{array}
\]
\item $\mathbf{{}^2 B\mi C_n}$. These cases are the same as for $\mathbf{B\mi C_n}$.

\item $\mathbf{{}^2 C\mi B_{2n}}$. Here $\mb I_F$ acts on $G^\vee$ as in $\mathbf{C\mi B_n}$. 
There are three cases:
\[
\begin{array}{ccccccc}
(PSO_{4n}^{*,\vee})^{\mb I_F} & = & Spin_{4n}(\C)^{\mb I_F} & = & Spin_{4n-1}(\C) & = & G'^\vee ,\\
(SO_{4n}^{*,\vee})^{\mb I_F} & = & SO_{4n}(\C)^{\mb I_F} & = & O_{4n-1}(\C) & = & G'^\vee ,  \\
(Spin_{4n}^{*,\vee})^{\mb I_F} & = & PSO_{4n}(\C)^{\mb I_F} & = & SO_{4n-1}(\C) & = & G'^\vee .
\end{array}
\]
\item $\mathbf{{}^2 C\mi B_{2n+1}}$. Again $\mb I_F$ acts on $G^\vee$ as in $\mathbf{C\mi B_n}$, and
\[
\begin{array}{ccccccc}
(PSO_{4n+2}^{*,\vee})^{\mb I_F} & = & Spin_{4n+2}(\C)^{\mb I_F} & = & Spin_{4n+1}(\C) & = & G'^\vee ,\\
(SO_{4n+2}^{*,\vee})^{\mb I_F} & = & SO_{4n+2}(\C)^{\mb I_F} & = & O_{4n+1}(\C) & = & G'^\vee ,  \\
(Spin_{4n+2}^{*,\vee})^{\mb I_F} & = & PSO_{4n+2}(\C)^{\mb I_F} & = & SO_{4n+1}(\C) & = & G'^\vee .
\end{array}
\]
\item $\mathbf{F_4^I}$. The group $\mb I_F$ acts on $G^\vee = E_{6,\Sc}(\C)$ via an outer
automorphism which stabilizes a pinning, and
\[
(G^\vee)^{\mb I_F} = E_{6,\Sc}(\C)^{\mb I_F} = F_4(\C) = G'^\vee .  
\]
The same holds with $\mc G = E_{6,\Sc}$ and $G^\vee = E_{6,\ad}(\C)$.
\item $\mathbf{G_2^I}$. In this case some elements of $\mb I_F$ act on $G^\vee = Spin_8 (\C)$
via an automorphism $\tau$ of order three which stabilizes a pinning, and maybe some other
elements of $\mb I_F$ act via an outer automorphism which stabilizes the same pinning. The 
$\mb I_F$-invariants are already determined by $\tau$:
\[
(G^\vee)^{\mb I_F} = Spin_8 (\C)^\tau = G_2 (\C) = G'^\vee .  
\]
The same holds with $\mc G = {}^r D_{4,\Sc}$ and $G^\vee (\C) = PSO_8 (\C)$.
\end{itemize}
Any two isomorphisms $G'^\vee \to (G^\vee)^{\mb I_F}$ differ by an automorphism of
$G'^\vee$. As $G'^\vee$ has type $B_n, C_n, F_4$ or $G_2$, all its automorphisms are inner.
\end{proof}

Let us compare the unramified L-parameters for $G$ and $G'$.

\begin{lem}\label{lem:4.2}
Lemma \ref{lem:4.1} induces a canonical bijection
$\lambda^\Phi_{G' G} : \Phi_\nr (G') \to \Phi_\nr (G)$.
\end{lem}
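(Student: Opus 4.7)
The strategy is to promote $\lambda^\vee_{G'G}$ to a morphism of L-groups and compose with parameters. Since $\lambda^\vee_{G'G}$ is $\mb W_F$-equivariant by Lemma \ref{lem:4.1}, the formula $(g,w) \mapsto (\lambda^\vee_{G'G}(g),w)$ defines a homomorphism $\lambda^L_{G'G}: {}^L G' \to {}^L G$ whose image lies in the closed subgroup $(G^\vee)^{\mb I_F} \rtimes \mb W_F$. Setting $\lambda^\Phi_{G'G}([\phi']) := [\lambda^L_{G'G} \circ \phi']$ clearly yields an L-parameter for $G$, and unramifiedness is inherited because $\phi'(i) = (1,i)$ is sent to $(1,i)$. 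Two choices of $\lambda^\vee_{G'G}$ permitted by Lemma \ref{lem:4.1} differ by an inner automorphism of $G'^\vee$, which translates into $G^\vee$-conjugation of the composite parameter, so the induced map on equivalence classes is independent of all choices and thus canonical.

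For surjectivity I would show that any $\phi \in \Phi_\nr(G)$ automatically factors through $(G^\vee)^{\mb I_F} \rtimes \mb W_F$. Writing $\phi(w) = (c(w), w)$, the homomorphism relation $\phi(w)\phi(i)\phi(w)^{-1} = \phi(w i w^{-1})$ for $i \in \mb I_F$, combined with $\phi(i) = (1,i)$, forces $c(w) \in (G^\vee)^{\mb I_F}$ for every $w \in \mb W_F$. Likewise, the image of the $SL_2(\C)$-factor commutes with $\phi(\mb I_F) = \{1\} \times \mb I_F$, so it too lies in $(G^\vee)^{\mb I_F}$. Applying $(\lambda^\vee_{G'G})^{-1}$ to the first coordinate and the identity on $\mb W_F$ then produces a preimage $\phi' \in \Phi_\nr(G')$.

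The delicate point will be injectivity: if $\phi_i := \lambda^L_{G'G} \circ \phi_i'$ are $G^\vee$-conjugate by some $g \in G^\vee$, one must produce a conjugator inside $(G^\vee)^{\mb I_F}$ so that, after pulling back through $\lambda^\vee_{G'G}$, one obtains a $G'^\vee$-conjugacy between $\phi_1'$ and $\phi_2'$. Applying any $i \in \mb I_F$ to the relation $g\phi_1 g^{-1} = \phi_2$, and using that both parameters are pointwise $\mb I_F$-fixed as maps into $G^\vee$, shows that $g^{-1}i(g) \in Z_{G^\vee}(\phi_1)$, producing a 1-cocycle $\mb I_F \to Z_{G^\vee}(\phi_1)$ whose triviality one must establish in order to adjust $g$ into $(G^\vee)^{\mb I_F}$. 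This is the main hurdle; I expect to settle it either via Steinberg's vanishing $H^1(\mb I_F, H) = 1$ for $H$ the simply connected cover of $Z_{G^\vee}(\phi_1)^\circ$, or by inspecting the small component groups that actually occur for the companion pairs listed in Section \ref{sec:list}.
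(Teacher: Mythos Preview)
Your approach coincides with the paper's, and your construction of the map, canonicity, and surjectivity arguments are essentially identical to what the paper does. The only difference is that you have made injectivity look harder than it is.

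Rather than applying $i \in \mb I_F$ as an automorphism to the equation $g\phi_1 g^{-1} = \phi_2$, simply \emph{evaluate} that equation at the element $i \in \mb I_F$. Since both $\phi_1$ and $\phi_2$ are unramified, $\phi_1(i) = \phi_2(i) = (1,i)$, so
\[
(1,i) \;=\; g\,\phi_1(i)\,g^{-1} \;=\; (g,1)(1,i)(g^{-1},1) \;=\; \big(g \cdot i(g^{-1}),\, i\big),
\]
whence $i(g) = g$ for every $i \in \mb I_F$, i.e.\ $g \in (G^\vee)^{\mb I_F}$ outright. Your cocycle $i \mapsto g^{-1}i(g)$ is therefore identically $1$, not merely cohomologically trivial, and no appeal to Steinberg vanishing or to the classification in Section~\ref{sec:list} is needed. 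This is exactly how the paper handles the point: it observes that any $G^\vee$-conjugation carrying one unramified parameter to another must already be by an element of $(G^\vee)^{\mb I_F}$, so the equivalence relation on $\Phi_\nr(G)$ is conjugation by $(G^\vee)^{\mb I_F}$ and the bijection follows immediately from Lemma~\ref{lem:4.1}.
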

\begin{proof}
For $G'$ the group $\mb I_F$ acts trivially on $G'^\vee$. Hence the data for an unramified 
L-parameter are simple: a group homomorphism
\[
\phi' : \mb W_F / \mb I_F \times SL_2 (\C) \to G'^\vee \rtimes \mb W_F / \mb I_F 
\]
which is algebraic on $SL_2 (\C)$ and with $\phi' (\Fr) \in G'^\vee \Fr$ semisimple. To get
$\Phi_\nr (G')$, we consider such $\phi'$ up to conjugation by $G'^\vee$.

In an unramified L-parameter $\phi$ for $G$, $\phi (i) = (1,i)$ for all $i \in \mb I_F$.
The semisimple element $\phi (\Fr) = (s,\Fr) \in {}^L G$ must satisfy
\begin{multline*}
(1, \Fr \,i\, \Fr^{-1}) = \phi (\Fr \,i\, \Fr^{-1}) \\
= \phi (\Fr) \phi (i) \phi (\Fr)^{-1} = (s, \Fr) (1,i) (s,\Fr)^{-1} \\
= (s, \Fr \,i\, \Fr^{-1})(s^{-1},1) = (s (\Fr \,i\, \Fr^{-1})(s^{-1}), \Fr \,i\, \Fr^{-1}) .
\end{multline*}
Hence $(\Fr \,i\, \Fr^{-1})(s)$ must equal $s$ for $i \in \mb I_F$, which says that
$s \in (G^\vee)^{\mb I_F}$. The remaining content of $\phi$ is an algebraic group homomorphism
\[
SL_2 (\C) \to Z_{(G^\vee)^{\mb I_F}}(\phi (\Fr)).
\]
In principle unramified L-parameters for $G$ are considered up to conjugation by elements of
$G^\vee$. But if they must stay unramified, we may only conjugate by elements that 
centralize $(1,i)$ for all $i \in \mb I_F$, that is, by elements of $(G^\vee)^{\mb I_F}$. 

In view of the above, Lemma \ref{lem:4.1} provides a bijection 
\begin{equation}\label{eq:4.2}
\Phi_\nr (G') \to \Phi_\nr (G) : \phi' \mapsto \lambda^\vee_{G' G} \circ \phi' ,
\end{equation}
where $\lambda^\vee_{G' G}$ is extended to $G'^\vee \rtimes \mb W_F \to (G^\vee)^{\mb I_F}
\rtimes \mb W_F$ by making it the identity on $\mb W_F$. By definition inner automorphisms
have no effect on $\Phi (G')$, they are already divided out. Hence the unicity property in 
Lemma \ref{lem:4.1} entails that the bijection \eqref{eq:4.2} is canonical. 
\end{proof}

Enhancements of unramified L-parameters for $G$ and $G'$ can be compared in the same way.

\begin{lem}\label{lem:4.3}
Let $\phi' \in \Phi_\nr (G')$ and let $\phi \in \Phi_\nr (G)$ be its image under Lemma 
\ref{lem:4.2}.
\enuma{
\item Lemma \ref{lem:4.1} induces an isomorphism $Z_{G'^\vee}(\phi') \to Z_{G^\vee}(\phi)$,
which sends $Z(G'^\vee)^{\mb W_F}$ to $Z(G^\vee)^{\mb W_F}$.
\item Part (a) gives a canonical bijection from the set of $G'$-relevant enhancements
of $\phi'$ to the set of $G$-relevant enhancements of $\phi$.
}
\end{lem}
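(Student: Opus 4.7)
The plan is to derive both statements from the $\mb W_F$-equivariant isomorphism $\lambda^\vee_{G'G}\colon G'^\vee \to (G^\vee)^{\mb I_F}$ of Lemma~\ref{lem:4.1}, in the same style as the bijection of Lemma~\ref{lem:4.2}. For part (a), I would start from the observation already used there: because $\phi(i) = (1,i)$ for $i \in \mb I_F$, an element of $G^\vee$ commutes with $\phi|_{\mb I_F}$ iff it lies in $(G^\vee)^{\mb I_F}$, so
\[
Z_{G^\vee}(\phi) = Z_{(G^\vee)^{\mb I_F}}\bigl(\phi(\Fr),\, u_\phi\bigr),
\]
while $Z_{G'^\vee}(\phi') = Z_{G'^\vee}\bigl(\phi'(\Fr),\, u_{\phi'}\bigr)$ since $\mb I_F$ already acts trivially on $G'^\vee$. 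Because $\lambda^\vee_{G'G}$ is $\mb W_F$-equivariant and by construction conjugates $(\phi'(\Fr), u_{\phi'})$ to $(\phi(\Fr), u_\phi)$, it restricts to an isomorphism of these centralizers. The center claim reduces to knowing that $Z\bigl((G^\vee)^{\mb I_F}\bigr) = Z(G^\vee)^{\mb I_F}$, since $\Fr$-equivariance is then automatic; I would verify this equality case-by-case from the list in Section~\ref{sec:list}, where in each case a direct computation of scalars preserving the relevant form or outer automorphism gives it.

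For part (b), the centralizer isomorphism from (a) lifts through the quotient $G^\vee \to G^\vee / Z(G^\vee)$ and the covering $G^\vee_\Sc \to G^\vee / Z(G^\vee)$ to an isomorphism of preimages $Z^1_{G'^\vee_\Sc}(\phi') \to Z^1_{G^\vee_\Sc}(\phi)$. Applying $\pi_0$ yields an isomorphism $\mc S_{\phi'} \to \mc S_\phi$, and pulling back irreducible representations along it produces a canonical bijection between the enhancements of $\phi'$ and those of $\phi$. It remains to show that this bijection restricts to one between $G'$-relevant and $G$-relevant enhancements: equivalently, that under the induced isomorphism $Z(G'^\vee_\Sc) \to Z(G^\vee_\Sc)^{\mb I_F}$, the relevance character $\chi^+_{\mc G'}$ corresponds to the restriction of $\chi^+_{\mc G}$.

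The main obstacle will be exactly this last matching of inner-twist characters. For each entry of the list of Section~\ref{sec:list} one must check that, under the isomorphism of centers, the character of $Z(G^\vee_\Sc)^{\mb W_F}$ classifying $\mc G$ as an inner twist of its quasi-split form transfers to the corresponding character for $\mc G'$. When $\mc G'$ is connected, $\mc G$ and $\mc G'$ are either both quasi-split or both not, and their inner-twist cohomology classes are encoded identically on the matching centers. For the disconnected companion $\mc G' = \mc G'^\circ \times \{\pm 1\}$ the definition of $\chi_{\mc G'}$ given just before Lemma~\ref{lem:4.1} (trivial iff $\mc G$ is quasi-split) is arranged precisely to make the matching hold, so that once one tracks through the explicit descriptions of $G^\vee$, $G'^\vee$ and their centers in the proof of Lemma~\ref{lem:4.1}, the verification becomes routine.
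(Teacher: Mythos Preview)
Your proposal is correct and follows essentially the same approach as the paper. The only cosmetic difference is that for part~(b) the paper explicitly invokes Lemma~\ref{lem:4.1} for the adjoint group $\mc G_\ad$ (using that ${G_\ad}^\vee = {G^\vee}_\Sc$) to obtain the isomorphism at the simply connected level and the identification $Z({G'_\ad}^\vee)^{\mb W_F} \cong Z({G_\ad}^\vee)^{\mb W_F}$, rather than phrasing it as a lift through quotient and cover; and the paper then dispatches the relevance matching by the same quasi-split versus non-quasi-split dichotomy you describe.
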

\begin{proof}
(a) As $Z_{G^\vee}(\phi) \subset (G^\vee)^{\mb I_F}$, this is a direct consequence of
Lemmas \ref{lem:4.1} and \ref{lem:4.2}.\\
(b) We find it easiest to proceed by classification.
For $\mathbf{B\mi C_n , C\mi BC_n , C\mi B_n , F_4^I}$ and $\mathbf{G_2^I}$ the $F$-groups 
$\mc G$ and $\mc G'$ are quasi-split, so $\chi^e_{\mc G} = \mr{triv} = \chi^e_{\mc G'}$. 
Then the relevant enhancements of $\phi$ are $\Irr (\mc S_\phi / Z({G^\vee}_\Sc)) = 
\Irr \big( Z_{G^\vee} (\phi) / Z(G^\vee)^{\mb W_F} \big)$, and similarly for $\phi'$.
Clearly part (a) induces a group isomorphism $Z_{G'^\vee} (\phi) / Z(G'^\vee)^{\mb W_F} \to
Z_{G^\vee} (\phi) / Z(G^\vee)^{\mb W_F}$, which settles these cases.

For $\mathbf{{}^2 B\mi C_n , {}^2 C\mi B_{2n}}$ and $\mathbf{{}^2 C\mi B_{2n+1}}$, the $F$-groups 
$\mc G$ and $\mc G'$ are the unique non-quasi-split inner twists of a quasi-split group, so 
$\chi_{\mc G}$ and $\chi_{\mc G'}$ both equal the unique nontrivial character of 
\[
Z({G'^\vee}_\Sc)^{\mb W_F} \cong Z({G^\vee}_\Sc)^{\mb W_F} \cong \Z / 2 \Z . 
\]
In these cases, by inspection, $(\mc G_\ad )'$ equals ${\mc G'}_\ad$. 
From that and Lemma \ref{lem:4.1} for $\mc G_\ad$ we deduce
\begin{equation}\label{eq:4.16}
{G^{'\vee}}_\Sc = {G_\ad}^{'\vee} \cong ({G_\ad}^\vee)^{\mb I_F} = ({G^\vee}_\Sc )^{\mb I_F} .
\end{equation}
One checks directly that $\mb W_F$ acts trivially on $Z({G'^\vee}_\Sc))$, so that
$\mc S_{\phi'} = \pi_0 (Z_{{G'^\vee}_\Sc}(\phi'))$. The $G'$-relevant enhancements of $\phi'$
are the irreducible representations of $\mc S_\phi$ with nontrivial $Z({G'^\vee}_\Sc$-character.
By \eqref{eq:4.16} and part (a) for $G_\ad$, these are matched bijectively with the irreducible
representations of $\pi_0 (Z_{{G^\vee}_\Sc}(\phi))$ whose $Z({G^\vee}_\Sc)^{\mb W_F}$-character
is nontrivial. By Lemma \ref{lem:7.1}.c (the part that we need is elementary, it relies only
on \eqref{eq:7.6}) that set of representations is naturally in bijection with the set of 
irreducible representations of $\mc S_\phi$ whose $Z({G^\vee}_\Sc)$-character equals $\chi_{\mc G}^e$.
\end{proof}

Recall that the group of weakly unramified characters of $X_\Wr (G)$ is naturally isomorphic 
with $(Z (G^\vee)^{\mb I_F})_\Fr$. The latter group acts on $\Phi_e (G)$ by
\begin{equation}\label{eq:4.1}
z \cdot (\phi,\rho) = (z \phi, \rho) , \qquad (z \phi)(\Fr) = z (\phi (\Fr)) .
\end{equation}
where $z \phi = \phi$ on $\mb I_F \times SL_2 (\C)$. The constructions in Section 
\ref{sec:list} entail that 
\begin{equation}\label{eq:4.3}
(Z (G^\vee)^{\mb I_F})_\Fr \cong \Irr (\Omega_G) \cong 
\Irr (\Omega_{G'}) \cong (Z (G'^\vee)^{\mb I_F})_\Fr .
\end{equation}
We call this twisting a Langlands parameter by a weakly unramified character. 
Recall the definition of cuspidality for enhanced L-parameters from  \cite[\S 6]{AMS1}.
 
\begin{prop}\label{prop:4.4}
Lemma \ref{lem:4.1} induces a canonical bijection 
\[
\lambda^{\Phi_e}_{G' G} : \Phi_{\nr,e}(G') \to \Phi_{\nr,e}(G) .
\]
This map and its inverse preserve boundedness, discreteness, cuspidality and twists by 
$(Z (G'^\vee)^{\mb I_F})_\Fr \cong (Z (G^\vee)^{\mb I_F})_\Fr$ as in \eqref{eq:4.1}.
\end{prop}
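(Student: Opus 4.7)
The plan is to assemble Lemmas~\ref{lem:4.2} and~\ref{lem:4.3} into the bijection $\lambda^{\Phi_e}_{G'G}$, and then to verify each of the three preservation properties by reducing it to a statement about the isomorphism $\lambda^\vee_{G'G} : G'^\vee \to (G^\vee)^{\mb I_F}$ from Lemma~\ref{lem:4.1}. By Lemma~\ref{lem:4.2}, postcomposition with (the $\mb W_F$-extension of) $\lambda^\vee_{G'G}$ defines a canonical bijection $\Phi_\nr(G') \to \Phi_\nr(G)$. For each matched pair $(\phi',\phi)$, Lemma~\ref{lem:4.3}.b supplies a canonical bijection between $G'$-relevant enhancements of $\phi'$ and $G$-relevant enhancements of $\phi$. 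Packaging these bijections together defines $\lambda^{\Phi_e}_{G'G}$; its inverse is constructed symmetrically from $(\lambda^\vee_{G'G})^{-1}$.

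For boundedness: $\phi$ is bounded iff $\phi(\Fr) = (s,\Fr)$ with $s$ in a compact subgroup of $G^\vee$. Since $\lambda^\vee_{G'G}$ is an isomorphism of complex reductive algebraic groups, it is a homeomorphism onto the closed subgroup $(G^\vee)^{\mb I_F} \subset G^\vee$; because the latter is closed, a subset of it has compact closure in $(G^\vee)^{\mb I_F}$ iff it has compact closure in $G^\vee$. Hence $s' \in G'^\vee$ lies in a compact subgroup iff $s = \lambda^\vee_{G'G}(s')$ does, so $\phi'$ is bounded iff $\phi$ is bounded, and analogously for the inverse direction.

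For discreteness: $\phi \in \Phi_\nr(G)$ is discrete iff $Z_{G^\vee}(\phi)^\circ \subset Z(G^\vee)^{\mb W_F}$, which is the standard reformulation of the condition that $\phi$ does not factor through ${}^L M$ for any proper Levi $\mc M$. By Lemma~\ref{lem:4.3}.a, $\lambda^\vee_{G'G}$ restricts to an isomorphism of algebraic groups $Z_{G'^\vee}(\phi') \to Z_{G^\vee}(\phi)$ carrying $Z(G'^\vee)^{\mb W_F}$ onto $Z(G^\vee)^{\mb W_F}$. Such an isomorphism preserves identity components and inclusions, so the discreteness condition transfers in both directions.

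The last property, compatibility with twisting by weakly unramified characters, is the delicate step and where the main work lies. The action \eqref{eq:4.1} only multiplies $\phi(\Fr)$ by a central element $z \in (Z(G^\vee)^{\mb I_F})_\Fr$ and leaves the enhancement fixed; the same holds on the $G'$-side for $z' \in (Z(G'^\vee)^{\mb I_F})_\Fr$. For $\lambda^{\Phi_e}_{G'G}$ to intertwine the two actions one needs the identification induced by restricting $\lambda^\vee_{G'G}$ to centers to coincide with the canonical isomorphism \eqref{eq:4.3} coming from $\Irr(\Omega_{G'}) = \Irr(\Omega_G)$. This compatibility is exactly what the requirement $\Omega_{G'} \cong \Omega_G$ built into the choice of companion group in Section~\ref{sec:list} is designed to guarantee, but confirming it still requires revisiting the case-by-case inspection underlying Lemma~\ref{lem:4.1}: in each entry of the list one checks that the explicit isomorphism $\lambda^\vee_{G'G}$ restricts on centers to the map dictated by the common $\Omega_G$-structure. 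Once this matching is in place, equivariance of $\lambda^{\Phi_e}_{G'G}$ with respect to the twisting action is immediate.
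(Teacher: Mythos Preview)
Your argument is correct and follows the same overall route as the paper: assemble the bijection from Lemmas~\ref{lem:4.2} and~\ref{lem:4.3}, then check boundedness via the semisimple element $s$ and discreteness via the centralizer isomorphism in Lemma~\ref{lem:4.3}.a. The paper uses the finiteness criterion $|Z_{G^\vee}(\phi)/Z(G^\vee)^{\mb W_F}| < \infty$ for discreteness while you use the equivalent $Z_{G^\vee}(\phi)^\circ \subset Z(G^\vee)^{\mb W_F}$, but this is cosmetic.

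Where you diverge from the paper is in the weighting of difficulty. You call the twist-equivariance ``the delicate step where the main work lies'' and propose revisiting the case-by-case classification of Lemma~\ref{lem:4.1}; the paper, by contrast, dismisses it in one line as ``clear from the construction''. The paper's shortcut is justified: since $\lambda^\vee_{G'G}$ is a group homomorphism, twisting $\phi'(\Fr)$ by $z'\in Z(G'^\vee)$ transports under $\phi = \lambda^\vee_{G'G}\circ\phi'$ to twisting $\phi(\Fr)$ by $\lambda^\vee_{G'G}(z')$. By Lemma~\ref{lem:4.3}.a (and the observation in the proof of Lemma~\ref{lem:4.1} that $\mb W_F$ acts trivially on both $G'^\vee$ and $(G^\vee)^{\mb I_F}$), $\lambda^\vee_{G'G}$ carries $Z(G'^\vee) = (Z(G'^\vee)^{\mb I_F})_\Fr$ into $Z(G^\vee)^{\mb W_F} = (Z(G^\vee)^{\mb I_F})_\Fr$. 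Finally, Section~\ref{sec:list} records that $|\Omega_G|\le 2$, so these cyclic groups admit a \emph{unique} isomorphism: the one induced by $\lambda^\vee_{G'G}$ is automatically the one in~\eqref{eq:4.3}. No additional case analysis is needed.
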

\begin{proof}
The map $\lambda^{\Phi_e}_{G' G}$, its bijectivity and its canonicity come from
Lemmas \ref{lem:4.1}, \ref{lem:4.2} and \ref{lem:4.3}. It is clear from the construction
that this bijection preserves weakly unramified twists by the group \eqref{eq:4.3}.

We consider an arbitrary $(\phi' ,\rho' ) \in \Phi_{\nr,e}(G')$ and we write 
$(\phi,\rho) = \lambda^{\Phi_e}_{G' G} (\phi' ,\rho' )$. 
Boundedness of $\phi$ depends only on $s = \phi (\Fr) \Fr^{-1} \in G^\vee$. We saw in
the proof of Lemma \ref{lem:4.2} that $s \in (G^\vee)^{\mb I_F}$, while by construction
\[
\lambda_{G' G}^{\vee\, -1}(s) = \phi' (\Fr) \Fr^{-1} \in G'^\vee .
\]
Hence $\phi$ is bounded if and only if $\phi'$ is bounded.

As observed in \cite[\S 3.2]{GrRe}, $\phi$ is discrete if and only if 
$Z_{G^\vee}(\phi) / Z(G^\vee)^{\mb W_F}$ is finite. By Lemma \ref{lem:4.3}.a this is 
equivalent to finiteness of $Z_{G'^\vee}(\phi) / Z(G'^\vee)^{\mb W_F}$.

The construction in Lemma \ref{lem:4.2} entails that 
\[
Z_{G^\vee}(\phi (\mb W_F)) = \lambda^\vee_{G' G} \big( Z_{G'^\vee}(\phi' (\mb W_F)) \big) 
\quad \text{and} \quad 
\phi |_{SL_2 (\C)} = \lambda^\vee_{G' G} \circ \phi' |_{SL_2 (\C)}.
\] 
Similarly, in Lemma \ref{lem:4.3}.b we defined $\rho = \rho' \circ \lambda_{G' G}^{\vee\, -1}$.
Hence cuspidality of $(\phi,\rho)$ depends only on the pair $(u_\phi,\rho)$ for the
group $Z_{G^\vee}(\phi (\mb W_F))$. The situation for $(\phi',\rho')$ is entirely
analogous, with objects isomorphic to those for $(\phi,\rho)$. Hence one of these 
enhanced L-parameters if cuspidal if and only if the other is so.
\end{proof}

Let $\mc P = \mc M \mc U$ be a standard parabolic $F$-subgroup of $\mc G$ and let
$\mc P' = \mc M' \mc U'$ be the associated standard parabolic $F$-subgroup of $\mc G'$.
Then $\lambda^\vee_{G' G}$ restricts to an isomorphism $M'\vee \to (M^\vee)^{\mb I_F}$.
(When $\mc G' = \mc G'^\circ \times \{\pm 1\}$, we take $\mc M' = \mc M'^\circ \times \{\pm 1\}$
and $M'^\vee = M'^{\vee,\circ} \times \{ \pm 1 \}$.) Then $\mb W_F$ acts trivially on
$M'^\vee$ and the character $\chi_{\mc M'}$ of $Z(M'^\vee)$ can be deduced from $\chi_{\mc G'}$
via \cite[Lemma 6.6]{AMS1}. As in Lemmas \ref{lem:4.1}--\ref{lem:4.3} and Proposition 
\ref{prop:4.4} we obtain a canonical bijection
\begin{equation}\label{eq:4.13}
\lambda^{\Phi_e}_{M' M} : \Phi_{\nr,e}(M') \to \Phi_{\nr,e}(M) . 
\end{equation}
The cuspidal support of an element of $\Phi_{\nr,e}(G)$ \cite[\S 7]{AMS1} can be realized
as an element of $\Phi_{\nr,\cusp}(M)$ for a standard $F$-Levi subgroup $\mc M$ of $\mc G$.

\begin{lem}\label{lem:4.7}
The system of bijections $\lambda^{\Phi_e}_{M' M}$ (running over all standard $F$-Levi
subgroups) commutes with the cuspidal support map for enhanced L-parameters.
\end{lem}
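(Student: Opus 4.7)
The plan is to trace through the definition of the cuspidal support map of \cite{AMS1} \S 7 and verify that each ingredient is intertwined by the isomorphism $\lambda^\vee_{G'G}$ of Lemma \ref{lem:4.1}. The cuspidal support is built from (i) the centralizer $H := Z^1_{{G^\vee}_\Sc}(\phi|_{\mb W_F})$; (ii) the generalized Springer correspondence applied to $(u_\phi, \rho)$ inside $H^\circ$, producing a cuspidal triple $(L, v, \mc E)$ with $L$ a Levi subgroup of $H^\circ$; (iii) an enlargement of $L$ to a $\mb W_F$-stable Levi subgroup $M^\vee \subset G^\vee$, corresponding to a standard $F$-Levi $\mc M \subset \mc G$ and equipped with a cuspidal enhanced L-parameter. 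I would check compatibility of $\lambda^\vee_{G'G}$ with each of (i)-(iii) in turn.

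For step (i): since $\phi$ is unramified, $Z_{G^\vee}(\phi|_{\mb W_F}) = Z_{(G^\vee)^{\mb I_F}}(\phi(\Fr))$, and combined with Lemma \ref{lem:4.3}(a) this yields an isomorphism $H' := Z^1_{{G'^\vee}_\Sc}(\phi'|_{\mb W_F}) \cong H$ carrying $u_{\phi'}$ to $u_\phi$ (as $\phi|_{SL_2(\C)} = \lambda^\vee_{G'G} \circ \phi'|_{SL_2(\C)}$) and transporting $\rho'$ to $\rho$ via the identification $\mc S_{\phi'} \cong \mc S_\phi$ of Lemma \ref{lem:4.3}(b). For step (ii): the generalized Springer correspondence is canonical with respect to isomorphisms of ambient (possibly disconnected) reductive groups, so the cuspidal triple $(L', v', \mc E')$ in $H'^\circ$ associated to $(u_{\phi'}, \rho')$ is carried by $\lambda^\vee_{G'G}$ to the cuspidal triple $(L, v, \mc E)$ in $H^\circ$ associated to $(u_\phi, \rho)$.

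The crux is step (iii): one must show that the standard $F$-Levi subgroup $\mc M$ of $\mc G$ carrying the cuspidal support corresponds, under the bijection of Section \ref{sec:list}, to the standard $F$-Levi $\mc M'$ of $\mc G'$ produced by the same recipe for $(\phi', \rho')$. Following \cite{AMS1}, $M^\vee$ is determined (up to $G^\vee$-conjugacy) as a $\mb W_F$-stable Levi of $G^\vee$ containing $L$ with $Z(M^\vee)^\circ$ matching $Z(L)^\circ$ in an appropriate sense. Since $L \subset H \subset (G^\vee)^{\mb I_F}$, the Levi $M^\vee$ is automatically adapted to the $\mb I_F$-action, and its intersection with $(G^\vee)^{\mb I_F}$ is the $\lambda^\vee_{G'G}$-image of $M'^\vee$. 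I would confirm case-by-case, using the tables of Section \ref{sec:list}, that this duality matching of Levi data for $\mc G$ and $\mc G'$ is precisely the one induced on standard parabolic subgroups by the Section \ref{sec:list} bijection between relative local Dynkin diagrams. With this in hand, the bijection \eqref{eq:4.13} applied to the cuspidal enhanced L-parameter on $M'$ produces exactly the cuspidal enhanced L-parameter on $M$, giving the asserted commutative diagram.

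The main obstacle is the Levi-matching verification in step (iii). A uniform argument should be possible from the characterisation of $F$-Levi subgroups by subsets of the relative local Dynkin diagram, combined with the $\mb I_F$-fixed-point computation in Lemma \ref{lem:4.1} applied to Levi subgroups; but one may have to fall back on the explicit list to rule out pathological matchings. The cuspidality assertion (``in particular, every $\lambda^{\Phi_e}_{M'M}$ preserves cuspidality'') is then the special case $\mc M = \mc G$ of the commutative diagram, and is in any event immediate from the argument at the end of the proof of Proposition \ref{prop:4.4}.
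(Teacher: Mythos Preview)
Your approach is correct and follows essentially the same line as the paper's proof, though the paper packages it more efficiently. The key simplification the paper makes is to work one level up: rather than using $H = Z^1_{{G^\vee}_\Sc}(\phi|_{\mb W_F})$ as the ambient group, it observes that since $\phi$ is unramified, $Z^1_{{G_\ad}^\vee}(\phi|_{\mb I_F}) = {G_\ad}^{\vee,\mb I_F}$, and this is \emph{exactly} the group that Lemma \ref{lem:4.1} (applied to $\mc G_\ad$) identifies with ${G'_\ad}^\vee$. The entire cuspidal support construction of \cite{AMS1} then takes place inside this fixed group, with $\phi(\Fr)$, $u_\phi$, $\rho$ as input data, and is transported wholesale by the isomorphism; your steps (i) and (ii) are absorbed into this single observation.

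Your concern about step (iii), the Levi matching, is legitimate but turns out not to require a separate case-by-case verification. The bijection $\mc M \leftrightarrow \mc M'$ of Section \ref{sec:list} was already used to \emph{define} $\lambda^{\Phi_e}_{M'M}$ in \eqref{eq:4.13}, via the restriction $\lambda^\vee_{G'G}: M'^\vee \to (M^\vee)^{\mb I_F}$. Since the Levi produced by the cuspidal support is determined intrinsically inside ${G_\ad}^{\vee,\mb I_F} \cong {G'_\ad}^\vee$, and the passage to an $F$-Levi of $\mc G$ (resp.\ $\mc G'$) is governed by the same centralizer/connected-centre condition on both sides, the matching is automatic once one knows $\lambda^\vee_{G'G}$ restricts compatibly to Levis---which is built into \eqref{eq:4.13}. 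So the ``obstacle'' you flag dissolves, and no explicit list-checking is needed beyond what already went into Lemma \ref{lem:4.1}.
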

\begin{proof}
Recall from \cite[Proposition 7.3 and Definition 7.7]{AMS1} that the cuspidal support 
$\mb{Sc}(\phi,\rho)$ of $(\phi,\rho) \in \Phi_{\nr,e}(G)$ 
\begin{itemize}
\item has the same $\phi |_{\mb I_F}$,
\item is a cuspidal L-parameter for a (standard) $F$-Levi subgroup $M$ of $G$,
\item is determined entirely by a construction in the complex reductive group
$Z^1_{{G_\ad}^\vee} (\phi |_{\mb I_F}) = {G_\ad}^{\vee,\mb I_F}$, with $\phi (\Fr), u_\phi$
and $\rho$ as input.
\end{itemize}
Via Lemmas \ref{lem:4.1}--\ref{lem:4.3} and Proposition \ref{prop:4.4} all this is canonically
transferred to analogous objects with primes. It follows that
\begin{equation}\label{eq:4.14}
\mb{Sc} \big( \lambda^{\Phi_e}_{G' G}(\phi',\rho') \big) = 
\lambda^{\Phi_e}_{M' M} ( \mb{Sc}(\phi',\rho') ) . \qedhere
\end{equation}
\end{proof}

We denote the adjoint action of ${}^L G$ on $\mr{Lie}(G^\vee) / 
\mr{Lie}(Z(G^\vee)^{\mb W_F})$ by $\mr{Ad}_{G^\vee}$. The adjoint $\gamma$-factor of 
$\phi \in \Phi (G)$ is related to $\epsilon$-factors and L-functions as
\begin{equation}\label{eq:4.4}
\gamma (s,\mr{Ad}_{G^\vee} \circ \phi,\psi) = \epsilon (s,\mr{Ad}_{G^\vee} \circ \phi,\psi)
L(1-s,\mr{Ad}_{G^\vee} \circ \phi) L(s,\mr{Ad}_{G^\vee} \circ \phi)^{-1} .
\end{equation}
Here $s \in \C$ and $\psi : F \to \C^\times$ is an additive character, which by our 
conventions from \eqref{eq:2.11} must have order 0. 
For the definitions of the local factors in \eqref{eq:4.4} we refer to \cite{Tat,GrRe}. 
Let $\mr{Ad}_{(G^\vee)^{\mb I_F}}$ denote the 
adjoint action of $(G^\vee)^{\mb I_F} \rtimes \mb W_F$ on $\mr{Lie}((G^\vee)^{\mb I_F}) / 
\mr{Lie}(Z(G^\vee)^{\mb W_F})$. For an unramified L-parameter $\phi$, $\mr{Ad}_{G^\vee} \circ 
\phi$ can be restricted to $\mr{Ad}_{G^{\vee,\mb I_F}} \circ \phi$.

\begin{lem}\label{lem:4.5}
Let $\mc G$ be any connected reductive $F$-group and let $\phi \in \Phi_\nr (G)$. There exists 
$\varepsilon \in \{\pm 1, \pm \sqrt{-1}\}$, with $\varepsilon^2$ depending only on 
$\mr{Lie}(G^\vee)$, such that
\[
\gamma (0,\mr{Ad}_{G^\vee} \circ \phi,\psi) = \varepsilon 
q_F^{\mb a (\mr{Lie}(G^\vee)) /2} \gamma (0,\mr{Ad}_{(G^\vee)^{\mb I_F}} \circ \phi,\psi) .
\]
\end{lem}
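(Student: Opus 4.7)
The plan is to isolate the inertia-invariant part of the adjoint representation from its complement and exploit multiplicativity of local constants. Write $V = \mr{Lie}(G^\vee)/\mr{Lie}(Z(G^\vee)^{\mb W_F})$. Since $\mb I_F$ acts on $\mr{Lie}(G^\vee)$ through a finite (hence semisimple) quotient, there is a $\mb W_F$-stable decomposition $V = V^{\mb I_F} \oplus V'$ in which $V'$ has no $\mb I_F$-fixed vectors. Identifying Lie algebra invariants with the Lie algebra of the fixed-point subgroup, the summand $V^{\mb I_F}$ carries exactly the representation $\mr{Ad}_{(G^\vee)^{\mb I_F}} \circ \phi$. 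Multiplicativity of $\gamma$-factors along direct sums then gives
\[
\gamma(0, \mr{Ad}_{G^\vee} \circ \phi, \psi) = \gamma(0, \mr{Ad}_{(G^\vee)^{\mb I_F}} \circ \phi, \psi) \cdot \gamma(0, V' \circ \phi, \psi),
\]
so it suffices to prove $\gamma(0, V' \circ \phi, \psi) = \varepsilon\, q_F^{\mb a(\mr{Lie}(G^\vee))/2}$ with $\varepsilon^2$ depending only on $\mr{Lie}(G^\vee)$.

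For the $L$-part: because $V'$ has no $\mb I_F$-invariants and, via self-duality of $\mr{Ad}_{G^\vee}$ coming from the Killing form, neither does its contragredient, the factors $L(s, V' \circ \phi)$ and $L(1-s, V' \circ \phi)$ are identically $1$. The defining relation \eqref{eq:4.4} therefore collapses to $\gamma(0, V' \circ \phi, \psi) = \epsilon(0, V' \circ \phi, \psi)$. Applying the standard conductor formula for $\epsilon$-factors of Weil--Deligne representations with $\mr{ord}(\psi) = 0$,
\[
\epsilon(0, V' \circ \phi, \psi) = \epsilon(1/2, V' \circ \phi, \psi) \cdot q_F^{a(V')/2}.
\]
The Artin conductor $a(V')$ is determined by the action of the ramification filtration, which lies in $\mb I_F$ and acts independently of $\phi$; combined with the vanishing contribution of $V^{\mb I_F}$ this yields $a(V') = \mb a(\mr{Lie}(G^\vee))$.

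It remains to analyse $\varepsilon := \epsilon(1/2, V' \circ \phi, \psi)$. The Deligne formula for self-dual Weil representations, $\epsilon(1/2, \rho, \psi)^2 = (\det \rho)(-1) \in \{\pm 1\}$, applied to $\rho = V' \circ \phi$ immediately forces $\varepsilon^2 \in \{\pm 1\}$ and hence $\varepsilon \in \{\pm 1, \pm \sqrt{-1}\}$. The main obstacle will be to check that $(\det V')(-1)$ really depends only on $\mr{Lie}(G^\vee)$ and not on $\phi$: the key ingredient is that the adjoint representation of the connected reductive group $G^\vee$ has trivial determinant, which forces the $\phi$-dependent contribution to $\det V'$ to cancel against that of $\det V^{\mb I_F}$, on which $\phi(\Fr)$ acts (up to the fixed Galois action on $\mr{Lie}((G^\vee)^{\mb I_F,\circ})$) through an inner automorphism. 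What remains is a purely Galois-theoretic invariant of $\mr{Lie}(G^\vee)$, and the lemma follows.
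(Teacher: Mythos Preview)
Your argument tracks the paper's proof step for step: decompose $V=V^{\mb I_F}\oplus V'$, use multiplicativity of $\gamma$, kill the $L$-factors on $V'$ since it has no inertial invariants, apply the conductor shift $\epsilon(0)=\epsilon(1/2)\,q_F^{a/2}$, and identify the conductor of $V'\circ\phi$ with $\mb a(\mr{Lie}(G^\vee))$. Where the paper simply cites \cite[(15)]{GrRe} to conclude that $\varepsilon=\epsilon(1/2,V'\circ\phi|_{\mb W_F},\psi)$ is a fourth root of unity with $\varepsilon^2$ independent of $\phi$, you unpack this via the Deligne relation $\epsilon(1/2,\rho,\psi)^2=(\det\rho)(-1)$ and argue directly.

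That direct argument, however, has a gap. You claim the $\phi$-dependent part of $\det(V^{\mb I_F}\circ\phi)$ vanishes because $s=\phi(\Fr)\Fr^{-1}$ acts on $V^{\mb I_F}=\mr{Lie}((G^\vee)^{\mb I_F})$ through an ``inner automorphism''. But $s$ lies only in $(G^\vee)^{\mb I_F}$, which can be disconnected, and conjugation by an element of a non-identity component need not have determinant $1$ on the Lie algebra of the identity component. So your cancellation does not show that the \emph{character} $\det(V'\circ\phi)$ of $\mb W_F$ is independent of $\phi$; for different $\phi$ the element $s$ can sit in different components of $(G^\vee)^{\mb I_F}$.

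The repair is shorter than your detour: you only need the \emph{value} at $-1$, not the whole character. Under the reciprocity isomorphism $F^\times\cong\mb W_F^{\,\mr{ab}}$ the element $-1\in\mf o_F^\times$ lies in the image of $\mb I_F$, and since $\phi$ is unramified, $\phi|_{\mb I_F}$ acts on $V'$ purely through the fixed Galois action. Hence $(\det V'\circ\phi)(-1)$ is computed entirely from the $\mb W_F$-module structure of $\mr{Lie}(G^\vee)$, independently of $\phi$. With this replacement your proof is complete and coincides with the paper's.
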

\begin{proof}
Let Lie$(G^\vee)^{ram}$ be the "ramified part" of Lie$(G^\vee)$, that is, the sum of
the nontrivial irreducible $\mb I_F$-subrepresentations. Since $\mb I_F$ is normal in
$\mb W_F$, this gives a decomposition of $\mb W_F$-representations
\begin{equation}\label{eq:4.9}
\mr{Lie}(G^\vee) = \mr{Lie}(G^\vee)^{\mb I_F} \, \oplus \, \mr{Lie}(G^\vee)^{ram} 
\end{equation}
As unramified L-parameters act via $\mb W_F$ and $(G^\vee)^{\mb I_F}$, \eqref{eq:4.9} 
can also be considered as a decomposition of $\mb W_F \times SL_2 (\C)$-representations, 
which we write as
\[
\mr{Ad}_{G^\vee} \circ \phi = \mr{Ad}_{\mr{Lie}(G^\vee)^{\mb I_F}} \circ \phi 
\, \oplus \, \mr{Ad}_{\mr{Lie}(G^\vee)^{ram}} \circ \phi .
\]
By the additivity of $\gamma$-factors
\begin{equation}\label{eq:4.5}
\gamma (s, \mr{Ad}_{G^\vee} \circ \phi ,\psi) = 
\gamma (s, \mr{Ad}_{\mr{Lie}(G^\vee)^{\mb I_F}} \circ \phi ,\psi)
\gamma (s, \mr{Ad}_{\mr{Lie}(G^\vee)^{ram}} \circ \phi ,\psi) \quad \forall s \in \C.
\end{equation}
Further $(\mr{Lie}(G^\vee)^{ram})^{\mb I_F} = 0$, so 
\[
L(s,\mr{Lie}(G^\vee)^{ram}) = 1 \quad \text{and} \quad 
\epsilon (s, \mr{Ad}_{\mr{Lie}(G^\vee)^{ram}} \circ \phi ,\psi) =
\epsilon (s, \mr{Ad}_{\mr{Lie}(G^\vee)^{ram}} \circ \phi |_{\mb W_F} ,\psi) .
\]
With that \eqref{eq:4.5} becomes
\begin{equation}\label{eq:4.6}
\gamma (s, \mr{Ad}_{G^\vee} \circ \phi ,\psi) = \gamma (s, \mr{Ad}_{\mr{Lie}
(G^\vee)^{\mb I_F}} \circ \phi ,\psi) \epsilon (s, \mr{Ad}_{\mr{Lie}(G^\vee)^{ram}} 
\circ \phi |_{\mb W_F} ,\psi) \quad \forall s \in \C.
\end{equation}
It was observed in \cite[\S 3.2]{GrRe} that $\mr{Ad}_{G^\vee} \circ \phi$ and
$\mr{Ad}_{G^\vee} |_{\mf g^{\vee,ram}} \circ \phi$ are self-dual with respect 
to the Killing form. By \cite[(15)]{GrRe} this implies
\[
\epsilon (1/2, \mr{Ad}_{G^\vee} |_{\mf g^{\vee,ram}} \circ \phi |_{\mb W_F} ,\psi) 
= \varepsilon
\]
for some $\varepsilon \in \{\pm 1, \pm \sqrt{-1}\}$ with $\varepsilon^2$ depending only on 
$\mr{Lie}(G^\vee)$. Then \cite[(3.4.5)]{Tat} says
\begin{equation}\label{eq:4.7}
\epsilon (0, \mr{Ad}_{\mr{Lie}(G^\vee)^{ram}} \circ \phi |_{\mb W_F} ,\psi) = 
\varepsilon q_F^{\mb a (\mr{Lie}(G^\vee)^{ram} \circ \phi |_{\mb W_F}) / 2} .
\end{equation}
By definition the Artin conductor of a $\mb W_F$-representation $V$ depends only on
the restriction to $\mb I_F$, and $\mb a (V) = 0$ if $V^{\mb I_F} = V$. As $\phi$ is
unramified and by \eqref{eq:4.9}
\begin{equation}\label{eq:4.8}
\mb a (\mr{Lie}(G^\vee)^{ram} \circ \phi |_{\mb W_F} ) = 
\mb a (\mr{Lie}(G^\vee)^{ram}) = \mb a (\mr{Lie}(G^\vee)) ,
\end{equation}
where Lie$(G^\vee)$ and Lie$(G^\vee)^{ram}$ are endowed with the $\mb W_F$-action 
derived from conjugation inside ${}^L G$. Now combine \eqref{eq:4.6}, \eqref{eq:4.7} 
and \eqref{eq:4.8}.
\end{proof}

For discrete L-parameters, the conjectures from \cite{HII} assert that 
$|\gamma (0,\mr{Ad}_{G^\vee} \circ \phi,\psi)|$ is related to the formal degree of any member
of the L-packet $\Pi_\phi (G)$. 

We return to ramified simple groups. We know from Lemma \ref{lem:3.5} how formal degrees 
behave under the transfer from $\Rep (G)_\unip$ to $\Rep (G')_\unip$. It turns out that 
adjoint $\gamma$-factors behave in the same way. 
Let $\phi' \in \Phi_\nr (G')$ and write $\phi = \lambda^\Phi_{G' G}(\phi)$.
By Lemmas \ref{lem:4.1} and \ref{lem:4.2}
\[
\gamma (s, \mr{Ad}_{\mr{Lie} (G^\vee)^{\mb I_F}} \circ \phi ,\psi) =
\gamma (s, \mr{Ad}_{G'^\vee} \circ \phi', \psi) \qquad \forall s \in \C. 
\]
Then Lemma \ref{lem:4.5} says
\begin{equation}\label{eq:4.15}
\gamma (0,\mr{Ad}_{G^\vee} \circ \phi,\psi) = 
\varepsilon q_F^{\mb a (\mr{Lie}(G^\vee)) /2} \gamma (0,\mr{Ad}_{G'^\vee} \circ \phi',\psi) .
\end{equation}
With the material from Sections \ref{sec:Hecke} and \ref{sec:Lpar} we obtain a good
candidate for a local Langlands correspondence for unipotent representations of
ramified simple groups. Recall that a LLC 
\begin{equation}\label{eq:4.10}
\Irr (G')_\unip \to \Phi_{\nr,e}(G') : \pi' \mapsto (\phi_{\pi'}, \rho_{\pi'})
\end{equation}
with many nice properties was constructed in \cite{SolLLC} and \cite[Theorem 2.1]{FOS2}.
For supercuspidal representations of adjoint groups this agrees with \cite{LusUni1,Mor2}, 
for other unipotent representations it differs from the earlier constructions of Lusztig.
For supercuspidal representations of ramified simple groups there are some arbitrary
choices in \cite{SolLLC}, which stem from \cite{FOS1}. From \cite[\S 4.5.1]{Opd18} we 
get some additional requirements related to suitable spectral transfer morphisms. 
As in \cite{FOS2}, we use those requirements to fix some of the choices in \cite{FOS1}. 

Consider the composition of Corollary \ref{cor:3.2}, \eqref{eq:4.10} and Proposition \ref{prop:4.4}:
\begin{equation}\label{eq:4.11}
\begin{array}{ccccccc}
\Rep (G)_\unip & \to & \Rep (G')_\unip & \to & \Phi_{\nr,e}(G') & \to & \Phi_{\nr,e}(G) \\
\pi & \mapsto & \pi' & \mapsto & (\phi_{\pi'},\rho_{\pi'}) & \mapsto &
\lambda^{\Phi_e}_{G' G} (\phi_{\pi'},\rho_{\pi'}) .
\end{array}.
\end{equation}
All involved maps are bijective, so we obtain a bijection
\begin{equation}\label{eq:4.12}
\begin{array}{ccc}
\Rep (G)_\unip & \longrightarrow & \Phi_{\nr,e}(G) \\
\pi & \mapsto & (\phi_\pi, \rho_\pi)
\end{array}.
\end{equation}

\begin{thm}\label{thm:4.6}
Let $\mc G$ be a ramified simple group. The bijection \eqref{eq:4.12} satisfies:
\enuma{
\item $\pi$ is tempered if and only if $\phi_\pi$ is bounded.
\item $\pi$ is essentially square-integrable if and only if $\phi_\pi$ is discrete.
\item $\pi$ is supercuspidal if and only if $(\phi_\pi,\rho_\pi)$ is cuspidal. 
\item Let $\chi \in X_\Wr (G)$ correspond to $\hat \chi \in (Z(G^\vee)^{\mb I_F})_\Fr$.
Then $(\phi_{\chi \otimes \pi},\rho_{\chi \otimes \pi}) = (\hat \chi \phi_\pi,\rho_\pi)$.
\item The HII conjectures hold for tempered $\pi \in \Irr (G)_\unip$.
\item Equivariance with respect to $\mb W_F$-automorphisms of the Dynkin diagram of $\mc G$.
}
\end{thm}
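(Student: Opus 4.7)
My approach is to deduce each of the six properties from the corresponding property on the $G'$ side, where Theorem~\ref{thm:1} is already established in \cite{SolLLC,FOS2}, by propagating it through each of the three factors in the composition \eqref{eq:4.11}.

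Properties (a), (b), (c), (d) and (f) should follow from the work done in earlier sections. For temperedness (a) and essential square-integrability (b), Lemma~\ref{lem:3.3} tells us that the equivalence of Corollary~\ref{cor:3.2} preserves both, the LLC \eqref{eq:4.10} for $G'$ satisfies the analogous property, and $\lambda^{\Phi_e}_{G'G}$ preserves boundedness and discreteness by Proposition~\ref{prop:4.4}. For supercuspidality (c), I would combine Corollary~\ref{cor:2.4} (matching $\Irr_{\unip,\cusp}(G)$ with $\Irr_{\unip,\cusp}(G')$) with Lemma~\ref{lem:4.7} on the parameter side, and the cuspidal case of the LLC for $G'$. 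For the weakly unramified twist equivariance (d), I would invoke Lemma~\ref{lem:3.7} on the representation side, the known equivariance of \eqref{eq:4.10} under $X_\Wr(G')$, and the twist-compatibility portion of Proposition~\ref{prop:4.4}, matched via the canonical identification \eqref{eq:4.3}.

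For the diagram-automorphism equivariance (f), Lemmas~\ref{lem:2.5} and~\ref{lem:3.6} show that every $\mb W_F$-equivariant diagram automorphism $\tau$ fixes each $\pi \in \Irr(G)_\unip$, so it suffices to check that $\tau$ acts trivially on $\Phi_{\nr,e}(G)$. Any such $\tau$ induces an automorphism of $G^\vee$ that commutes with the $\mb I_F$-action, hence restricts to an automorphism of $(G^\vee)^{\mb I_F}$. Via the isomorphism $\lambda^\vee_{G'G}$ of Lemma~\ref{lem:4.1}, this corresponds to an automorphism of $G'^\vee$; since $G'^\vee$ has Lie type in $\{B_n, C_n, F_4, G_2\}$, all of its automorphisms are inner. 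Therefore $\tau$ acts on $(G^\vee)^{\mb I_F}$ by conjugation by some $g \in (G^\vee)^{\mb I_F} \subset G^\vee$, and since every $\phi \in \Phi_\nr(G)$ has image in $(G^\vee)^{\mb I_F} \rtimes \mb W_F$, the class of $(\tau \circ \phi, \tau \cdot \rho)$ equals that of $(\phi,\rho)$ in $\Phi_{\nr,e}(G)$.

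The main obstacle will be property (e), the HII conjecture. My plan is to track two scaling factors: by Lemma~\ref{lem:3.5} the equivalence of Corollary~\ref{cor:3.2} multiplies Plancherel densities by $q_F^{-\mb a(\mf g^\vee)/2}$, while by \eqref{eq:4.15} the transfer $\lambda^{\Phi_e}_{G'G}$ multiplies the modulus of the adjoint $\gamma$-factor by $q_F^{\mb a(\mf g^\vee)/2}$; these two factors cancel. The remaining combinatorial data match too: Lemma~\ref{lem:4.3}(a) identifies $Z_{G^\vee}(\phi)/Z(G^\vee)^{\mb W_F}$ with its $G'$-analogue, and $\dim \rho_\pi = \dim \rho_{\pi'}$ by Theorem~\ref{thm:2.3}(b). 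The delicate point will be verifying that the Haar measure $\textup{d}\mc O$ on the orbit of unitary unramified characters transfers correctly between $M$ and $M'$; for this I would apply (d) to standard Levi subgroups, together with the compatibility of Corollary~\ref{cor:3.2} with parabolic induction from Lemma~\ref{lem:3.4}. Once this is in place, the formula with $c_{M'} = 1$ for $G'$ from \cite{FOS2} transfers to $c_M = 1$ for $G$, proving Conjecture~\ref{conj:HII} in this setting.
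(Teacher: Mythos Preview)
Your approach matches the paper's: transfer each property through the three maps in \eqref{eq:4.11} using the lemmas you cite, and for (a)--(d) your citations coincide almost verbatim with the paper's proof. Two small remarks. In (e), the equality $\dim\rho_\pi=\dim\rho_{\pi'}$ comes from Lemma~\ref{lem:4.3}(b) (since $\rho_\pi=\rho_{\pi'}\circ(\lambda^\vee_{G'G})^{-1}$), not from Theorem~\ref{thm:2.3}(b), which concerns the representations $\sigma$ of the finite reductive quotients rather than the enhancements of L-parameters; otherwise your outline for (e) is exactly what the paper does, only more explicit about the orbit-measure bookkeeping. In (f), the paper takes a shorter route: instead of arguing that the automorphism of $(G^\vee)^{\mb I_F}$ induced by $\tau$ is inner via the Lie type of $G'^\vee$, it observes directly (from the description of $\tau$ preceding Lemma~\ref{lem:2.5}) that on $G^\vee$ the automorphism $\tau$ coincides with the action of an element of $\mb I_F$, hence fixes $(G^\vee)^{\mb I_F}$ and $Z_{G^\vee}(\phi_\pi)$ \emph{pointwise}. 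Your argument is also valid, but the paper's avoids having to verify innerness in the disconnected cases of $G'^\vee$.
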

\begin{proof}
(a) and (b) These follow from Lemma \ref{lem:3.3}, Proposition \ref{prop:4.4}
and \cite[Lemmas 5.6 and 5.7]{SolLLC}. \\
(c) Follows from Corollary \ref{cor:2.4}, \cite[Lemma 5.4]{SolLLC} and Lemma \ref{lem:4.7}.\\
(d) This is a consequence of Lemma \ref{lem:3.7}, \cite[Lemma 5.3]{SolLLC} 
and Proposition \ref{prop:4.4}.\\
(e) For $G'$ this was shown in \cite[Theorem 5.3]{FOS2}. When we pass from $G$ to $G'$, Lemma 
\ref{lem:3.5} says that the formal degree a square-integrable modulo centre representation
picks up a factor $q_F^{\mb{a}(\mf g^\vee)/2}$. By \eqref{eq:4.15} the absolute value of the 
involved adjoint $\gamma$-factor picks up the same factor. The other ingredients of the 
HHI-formula for formal degrees (the size of a component group and the dimension of an irreducible 
representation thereof) do not change when passing from $G$ to $G'$, by Lemma \ref{lem:4.3}. \\
(f) From page \pageref{lem:2.5} we know that it suffices to consider the single diagram
automorphism $\tau$. In Lemma \ref{lem:3.6} we showed that it fixes every $\pi \in \Irr_\unip (G)$.
It remains to show that $\tau$ acts trivially on $\Phi_{\nr,e}(G)$. From the classification
on page \pageref{lem:2.5} we also see that, on $G^\vee$, $\tau$ coincides with an element of
$\mb I_F$. By Lemma \ref{lem:4.2} im$(\phi_\pi) \subset (G^\vee)^{\mb I_F} \rtimes \mb W_F$
and by Lemma \ref{lem:4.3} $Z_{G^\vee}(\phi_\pi) \subset (G^\vee)^{\mb I_F}$. Hence $\tau$
indeed fixes $(\phi_\pi, \rho_\pi)$.
\end{proof}

\section{Supercuspidal unipotent representations}
\label{sec:supercusp}

In \cite{FOS1} it was assumed that all reductive $F$-groups under consideration split over 
an unramified extension of $F$. In this section we will lift that condition, and we generalize
all the results from that paper to arbitrary connected reductive $F$-groups.
Let us formulate the generalization of the main results of \cite{FOS1} that we are after.

\begin{thm}\label{thm:5.1}
Let $\mc G$ be a connected reductive $F$-group. There exists a bijection
\[
\begin{array}{ccc}
\Irr (G)_{\cusp,\unip} & \longrightarrow & \Phi_\nr (G)_\cusp \\
\pi & \mapsto & (\phi_\pi, \rho_\pi)
\end{array}
\]
with the following properties:
\enuma{
\item Equivariance with respect to twisting by weakly unramified characters.
\item Equivariance with respect to $\mb W_F$-automorphisms of the root datum of $\mc G$.
\item Compatibility with almost direct products of reductive groups. 
\item Suppose that $\pi \in \Irr (G)_{\cusp,\unip}$ is a constituent of the pullback of \\
$\pi_\ad \in \Irr (G_\ad)_{\cusp,\unip}$ to $G$. Then the canonical map 
\[
{G_\ad}^\vee \rtimes \mb W_F \to G^\vee \rtimes \mb W_F
\]
sends $\phi_{\pi_\ad}$ to $\phi_\pi$.
\item Let $Z(\mc G)_s$ be the maximal $F$-split central torus of $\mc G$. When $\pi$ is unitary:
}
\[
\mr{fdeg}(\pi, d\mu_{G,\psi}) = \dim (\rho_\pi) \big| \pi_0 \big( Z_{(\mc G / Z(\mc G)_s)^\vee}
(\phi) \big) \big|^{-1} |\gamma (0,\mr{Ad}_{G^\vee} \circ \phi_\pi, \psi)| .
\]
For a given $\pi$ the properties (a), (c), (d) and (e) determine $\phi_\pi$ uniquely,
up to twists by weakly unramified characters of $(\mc G / Z(\mc G)_s)(F)$.
\end{thm}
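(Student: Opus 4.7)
The plan is to adapt the strategy of \cite{FOS1} to the possibly ramified setting by reducing to the case of simple adjoint groups (already handled by Theorem \ref{thm:4.6}) and then propagating outward using the product decompositions \eqref{eq:9}. The main new technical point, compared to \cite{FOS1}, is that for ramified $\mc G$ one cannot simply replace $G$ by $G_\der$, and one must track the failure of surjectivity of $X_\Wr (G_\ad) \to X_\Wr (G)$ and the possible disconnectedness of $(G^\vee)^{\mb I_F}$.

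First I would treat simple adjoint $\mc G$. If $\mc G$ is $F_\nr$-split this is \cite[Theorem 3.3]{FOS1} together with \cite[Theorem 5.3]{FOS2}. If $\mc G$ is ramified, Corollary \ref{cor:2.4} gives a bijection $\Irr_{\cusp,\unip}(G) \leftrightarrow \Irr_{\cusp,\unip}(G')$ for the $F_\nr$-split companion group $\mc G'$, and Proposition \ref{prop:4.4} together with Lemma \ref{lem:4.7} gives the matching bijection $\Phi_{\nr,\cusp}(G) \leftrightarrow \Phi_{\nr,\cusp}(G')$. Composing with the LLC for $G'$ produces a bijection for $G$; parts (a), (b), (e) of the theorem are then exactly parts (d), (f), (e) of Theorem \ref{thm:4.6}, while (c) and (d) are vacuous for simple adjoint $\mc G$.

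Next I would pass to a connected reductive $F$-group $\mc G$ with $F$-anisotropic centre by exploiting \eqref{eq:9}. On the representation side, Lemma \ref{lem:5.3} writes every $\pi \in \Irr_{\cusp,\unip}(G)$ as $\chi \otimes \tilde \pi$ with $\tilde \pi$ pulled back from $G_\ad$ and $\chi \in X_\Wr (G)$, uniquely up to the action of $X_\Wr (G_\ad,G)$; the analogous statement on the L-parameter side comes from Lemma \ref{lem:5.4} and the description of $\sfq^\vee (\Phi_\nr (G_\ad))$ as $\phi$ with $\phi (\Fr) \in G^{\vee,\mb I_F,\circ}$. One defines $\phi_\pi := \hat \chi \cdot \sfq^\vee (\phi_{\tilde \pi_\ad})$ and $\rho_\pi$ by transport of the adjoint enhancement through $\sfq^\vee$, checking that the result is independent of the chosen decomposition by confronting both freedoms with the compatibility (d). Properties (a)--(d) are built into this definition; property (e), the HII identity, is the substantive input and is handled exactly as in \cite[\S 15--16]{FOS1}: one verifies that the ratio of formal degree to adjoint $\gamma$-factor is unchanged under weakly unramified twists and under pullback along $\sfq$, so the identity reduces to the adjoint case. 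The key arithmetic to check is that the $q_F^{\mb a (\mf g^\vee) / 2}$ scaling between $G$ and $G'$ in Lemma \ref{lem:3.5} matches the analogous scaling \eqref{eq:4.15} on adjoint $\gamma$-factors, which has already been arranged via the normalization \eqref{eq:2.11}.

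Extending from anisotropic centre to arbitrary connected reductive $\mc G$ is then a direct generalization of \cite[\S 15]{FOS1}: write $Z(\mc G)_s$ for the maximal $F$-split central torus, decompose $\Irr_{\cusp,\unip}(G)$ as $\Irr_{\cusp,\unip}(G/Z(G)_s(F)) \times X_\nr (Z(\mc G)_s)$ up to a finite ambiguity, and do the same on the L-parameter side via the short exact sequence $1 \to (G/Z(G)_s)^\vee \to G^\vee \to Z(\mc G)_s^\vee \to 1$. Compatibility with almost direct products (property (c)) is automatic from the multiplicative structure of each ingredient: roots, weakly unramified character groups, component groups, and $\gamma$-factors all decompose across the factors. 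The hardest step in this part is the bookkeeping needed to ensure that the formal degree normalization $\mu_{G,\psi}$ is compatible with the chosen decomposition; this is routine but requires invoking \cite[\S 5.1]{DeRe} and \eqref{eq:2.9} uniformly.

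Finally, for the uniqueness clause I would argue as in \cite[\S 16]{FOS1}. Property (d) determines $\phi_\pi$ modulo $\ker (\sfq^\vee) \subset Z(G^\vee)$, property (c) rigidifies the behaviour along direct product decompositions, and property (a) allows one to twist freely by $X_\Wr (G)$. The remaining ambiguity lies in the image of $X_\Wr (Z(\mc G)_s)$ inside $X_\Wr (G)$; but property (e), read through the presence of $|Z_{(G/Z(G)_s)^\vee}(\phi)|$ in the formal degree formula, pins down $\phi_\pi$ modulo precisely the weakly unramified characters of $(G/Z(G)_s)(F)$, which is the asserted ambiguity.
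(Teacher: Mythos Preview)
Your outline follows the same overall reduction scheme as the paper (simple adjoint $\rightarrow$ anisotropic centre $\rightarrow$ general), and the treatment of the adjoint and of the final isotropic step is essentially what the paper does. The gap is in the middle step.

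When you write ``$\rho_\pi$ by transport of the adjoint enhancement through $\sfq^\vee$'', this is not well-defined. The map $\sfq^\vee$ induces an injection $\mc S_{\phi_\ad} \hookrightarrow \mc S_\phi$ which is in general \emph{not} surjective (the paper records $\mc A_\phi / \mc A_{\phi_\ad} \cong \Irr (\Omega_{G_\ad} / \Omega_{G,G_\ad} N_{\phi_\ad})$, generalizing \cite[Lemma 13.2]{FOS1}), so a single $\rho_{\pi_\ad}$ typically sits inside several distinct $\rho \in \Irr (\mc S_\phi)$. Dually, $\sfq^*(\pi_\ad)$ is in general reducible, with constituents indexed by $G_\ad / N_{G_\ad}(P_{\mf f,\ad}) G$ (Lemma \ref{lem:5.3}.a). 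Your proposal does not say how to match these two sets of ambiguities against each other, nor does it establish that the resulting map is a \emph{bijection}. The paper handles this by carrying over the counting machinery of \cite[\S 13--14]{FOS1}: it introduces the integers $\sfa,\sfb,\sfa',\sfb'$, proves the modified Lemma \ref{lem:5.5} (which computes $\sfa$ and the stabilizer of $\phi$ in $X_\Wr (G)$ in terms of $\Omega_{G,G_\ad}$ rather than $\Omega_G$), and then checks $\sfa \sfb = \sfa' \sfb'$ to force bijectivity. Without this step, you have at best a map $\Irr (G)_{\cusp,\unip} \to \Phi_\nr (G)$, not a bijection onto $\Phi_{\nr,\cusp}(G)$.

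A second, smaller issue: you say property (e) ``is handled exactly as in \cite[\S 15--16]{FOS1}''. It is not. The paper replaces the semisimple reduction of \cite[\S 15]{FOS1} (which used $\Irr (G)_{\cusp,\unip} \cong \Irr (G_\der)_{\cusp,\unip}$, false for ramified groups) by a direct comparison with $G_\ad$, and has to redo the formal-degree computations: the volume relation \eqref{eq:5.8}, the dimension ratio \eqref{eq:5.10} with $\Omega_{G,\mf f}^\circ$ in place of $\Omega_{G,\mf f}$, and the $S^\sharp$-ratio \eqref{eq:5.11} involving $\ker(\Omega_G \to \Omega_{G_\ad})$, all differ from their analogues in \cite{FOS1}. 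These corrections are what make the two sides of \eqref{eq:5.14} match; quoting \cite{FOS1} verbatim would give the wrong powers of $|\Omega_{G,\mf f}|$ and miss the factor $\mr{vol}(Z(G)^\circ_1)$.
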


Most of the time we will assume that the centre of $\mc G$ is $F$-anisotropic. 
For such groups we recall the definitions of a few relevant numbers from \cite[\S 2]{FOS1}.
Let $\phi \in \Phi_\nr (G)$ and $\sigma \in \Irr_{\cusp,\unip} (P_{\mf f})$. 
\begin{itemize}
\item $\sfa$ is the number of $\tilde \phi \in \Phi_\nr^2 (G)$ which admit a $G$-relevant 
cuspidal enhancement and for each $F$-simple factor $\mc G_i$ of $\mc G$ satisfy
\[
\gamma (0,\mr{Ad}_{G_i^\vee} \circ \tilde \phi, \psi) = 
c_i \gamma (0,\mr{Ad}_{G_i^\vee} \circ \phi,\psi)
\] 
for some $c_i \in \Q^\times$ (as rational functions of $q_F$).
\item $\sfb$ is the number of $G$-relevant cuspidal enhancements of $\lambda$.
\item $\sfa'$ is defined as $|\Omega_{G,\mf f}|$ times the number of $G$-conjugacy
classes of maximal parahoric subgroups $P_{\tilde{\mf f}} \subset G$ for which there exists a 
$\tilde \sigma \in \Irr_{\cusp,\unip} (P_{\tilde{\mf f}})$ such that the components 
$\sigma_i, \tilde{\sigma}_i$ corresponding to any $F$-simple factor $\mc G_i$ of $\mc G$ satisfy
\[
\mr{fdeg} (\mr{ind}_{P_{\tilde{\mf f'},i}^G}(\tilde{\sigma}_i) = 
\tilde{c}_i \mr{fdeg} (\mr{ind}_{P_{\mf f,i}^G}(\sigma_i)
\] 
for some $\tilde{c}_i \in \Q^\times$ (as rational functions of $q_F$).
\item $\sfb'$ is the number of $\tilde \sigma \in \Irr_{\cusp,\unip}(P_{\mf f})$ with
$\dim (\tilde \sigma) = \dim (\sigma)$.
\end{itemize}

Suppose now that $\mc G$ is semisimple. From \cite[Theorem 2]{FOS2}, Theorem \ref{thm:4.6}
and compatibility with direct products of simple groups we get a map
\begin{equation}\label{eq:5.1}
\Irr (G)_{\cusp,\unip} \to \Phi_\nr^2 (G) / X_\Wr (G) ,
\end{equation}
where $X_\Wr (G)$ is identified with $(Z(G^\vee)^{\mb I_F})_\Fr$.
Fix an irreducible direct summand $\pi$ of $\mr{ind}_{P_{\mf f}}^G (\sigma)$ and
let $X_\Wr (G) \phi_\pi$ be its image under \eqref{eq:5.1}.

\begin{lem}\label{lem:5.2}
Let $\mc G$ be a ramified simple $F$-group and let $\mc G'$ be its $F_\nr$-split 
companion group. Let $\mf f, \sigma, \pi, \phi_\pi$ be as above and let $\mf f',
\sigma', \pi', \phi_{\pi'}$ be their images under the maps from 
Proposition \ref{prop:2.1}, Theorem \ref{thm:2.3}, Corollary \ref{cor:2.4} and
Lemma \ref{lem:4.2}. Then the numbers $\sfa, \sfb, \sfa', \sfb'$ for $G,\mf f, \sigma,
\lambda_\pi$ are the same as their counterparts for $G', \mf f', \sigma', \phi_{\pi'}$.
\end{lem}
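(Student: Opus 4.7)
The plan is to verify the four equalities one by one, in each case reducing the number in question to data about $(G,\mf f,\sigma,\phi_\pi)$ that has already been shown in Sections \ref{sec:unirep}, \ref{sec:Hecke} and \ref{sec:Lpar} to match the analogous data for $(G',\mf f',\sigma',\phi_{\pi'})$. Because everything is compatible with the $F$-simple decomposition, I will argue as though $\mc G$ (and hence $\mc G'^\circ$) is $F$-simple; the $\{\pm 1\}$ factor that occasionally appears in $\mc G'$ contributes the same amount to every quantity on both sides and therefore does not affect the counts.

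For $\sfb$: By Lemma \ref{lem:4.3}.(b) the bijection $\lambda^{\Phi_e}_{G'G}$ gives a canonical bijection between the $G$-relevant enhancements of $\phi_\pi$ and the $G'$-relevant enhancements of $\phi_{\pi'}$, and by Lemma \ref{lem:4.7} (or Proposition \ref{prop:4.4}) this bijection preserves cuspidality. Hence $\sfb = \sfb'_{\text{counterpart}}$.

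For $\sfa$: By Proposition \ref{prop:4.4}, $\lambda^{\Phi_e}_{G'G}$ restricts to a bijection
\[
\{\tilde\phi' \in \Phi^2_\nr(G') : \tilde\phi' \text{ admits a } G'\text{-relevant cuspidal enhancement}\}
\;\longleftrightarrow\;
\{\tilde\phi \in \Phi^2_\nr(G) : \ldots\}.
\]
It remains to see that the $\gamma$-factor ratio condition transports under this bijection. Writing $\tilde\phi = \lambda^{\Phi}_{G'G}(\tilde\phi')$ and applying \eqref{eq:4.15} to both $\phi$ and $\tilde\phi$,
\[
\frac{\gamma(0,\mr{Ad}_{G^\vee}\circ\tilde\phi,\psi)}{\gamma(0,\mr{Ad}_{G^\vee}\circ\phi,\psi)}
= \frac{\gamma(0,\mr{Ad}_{G'^\vee}\circ\tilde\phi',\psi)}{\gamma(0,\mr{Ad}_{G'^\vee}\circ\phi',\psi)},
\]
since the prefactor $\varepsilon\, q_F^{\mb a(\mr{Lie}(G^\vee))/2}$ cancels in the quotient. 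Consequently the constants $c_i \in \Q^\times$ match on both sides, which yields $\sfa = \sfa'_{\text{counterpart}}$.

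For $\sfb'$: By Theorem \ref{thm:2.3}.(b) the bijection
$\Irr_\unip(\overline{\mc G_{\mf f}}(k_F)) \leftrightarrow \Irr_\unip(\overline{\mc G'_{\mf f'}}(k_F))$ preserves both cuspidality and dimension. Since cuspidal unipotent representations of $P_{\mf f}$ are precisely those inflated from cuspidal unipotent representations of $\overline{\mc G^\circ_{\mf f}}(k_F) = P_{\mf f}/P_{\mf f}^+$, the number of $\tilde\sigma \in \Irr_{\cusp,\unip}(P_{\mf f})$ with $\dim(\tilde\sigma) = \dim(\sigma)$ equals the analogous number for $P_{\mf f'}$.

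For $\sfa'$: The factor $|\Omega_{G,\mf f}|$ equals $|\Omega_{G',\mf f'}|$ by Proposition \ref{prop:2.1}. The bijection $\mf f \mapsto \mf f'$ from Section \ref{sec:list} induces a bijection between $G$-conjugacy classes of maximal parahoric subgroups of $G$ and $G'$-conjugacy classes of maximal parahoric subgroups of $G'$. Given such maximal parahorics $P_{\tilde{\mf f}}$ and $P_{\tilde{\mf f}'}$, Theorem \ref{thm:2.3}.(b) provides a dimension-preserving bijection $\Irr_{\cusp,\unip}(P_{\tilde{\mf f}}) \leftrightarrow \Irr_{\cusp,\unip}(P_{\tilde{\mf f}'})$. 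For the proportionality condition, by Corollary \ref{cor:2.4} (or Lemma \ref{lem:3.5}) the formal degrees satisfy
\[
\mr{fdeg}\!\left(\mr{ind}_{P_{\tilde{\mf f}'}}^{G'}\tilde\sigma'\right) = q_F^{-\mb a(\mf g^\vee)/2}\,\mr{fdeg}\!\left(\mr{ind}_{P_{\tilde{\mf f}}}^{G}\tilde\sigma\right),
\]
and analogously for $(\mf f,\sigma)$. Taking ratios, the universal factor $q_F^{-\mb a(\mf g^\vee)/2}$ cancels, so the constants $\tilde c_i \in \Q^\times$ agree on both sides. Thus $\sfa' = \sfa'_{\text{counterpart}}$.

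The step I expect to require the most care is the matching of $\sfa$ and $\sfa'$, because one has to be sure that the extra prefactor produced by the ramification (the $q_F^{\mb a/2}$ in \eqref{eq:4.15} and the $q_F^{-\mb a/2}$ in Corollary \ref{cor:2.4}) truly cancels in the ratios that define the rational constants $c_i$ and $\tilde c_i$; once this is noted, the argument reduces to a bookkeeping exercise using the bijections already established.
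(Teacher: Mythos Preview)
Your proposal is correct and follows essentially the same approach as the paper, which simply cites Proposition~\ref{prop:4.4} and Lemma~\ref{lem:4.5} for $\sfa,\sfb$ and Proposition~\ref{prop:2.1}, Theorem~\ref{thm:2.3}, Corollary~\ref{cor:2.4} for $\sfa',\sfb'$; you have spelled out the cancellation of the $q_F^{\mb a/2}$ prefactors in the ratios defining the constants $c_i,\tilde c_i$, which is exactly the point underlying the paper's citation of Lemma~\ref{lem:4.5} and Corollary~\ref{cor:2.4}. One small remark: in your treatment of $\sfa$ the sign $\varepsilon$ from \eqref{eq:4.15} need not be literally the same for $\phi$ and $\tilde\phi$, but since $\varepsilon^2$ depends only on $\mr{Lie}(G^\vee)$ the quotient $\varepsilon_\phi/\varepsilon_{\tilde\phi}$ lies in $\{\pm 1\}\subset\Q^\times$, so the conclusion is unaffected.
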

\begin{proof}
For $\sfa$ and $\sfb$ this follows from Proposition \ref{prop:4.4} and Lemma \ref{lem:4.5}.
For $\sfa'$ and $\sfb'$ it is a consequence of Proposition \ref{prop:2.1}, Theorem
\ref{thm:2.3} and Corollary \ref{cor:2.4}.
\end{proof}

\emph{Proof of Theorem \ref{thm:5.1} for adjoint groups}\\
For simple adjoint groups Theorem \ref{thm:5.1} is established case-by-case, as explained
in \cite[\S 12]{FOS1}. The ramified simple adjoint groups are not considered in
\cite{FOS1}, for those we use Theorem \ref{thm:4.6} to associate enhanced L-parameters
to $\Irr (G)_{\cusp,\unip}$. By Corollary \ref{cor:2.4}, Lemma \ref{lem:4.5} and 
\cite[Theorem 1]{FOS1}, these are essentially (in a sense specified in that paper) the 
only L-parameters that make the HII conjectures true for $\Irr (G)_{\cusp,\unip}$.

With Sections \ref{sec:unirep} and \ref{sec:Lpar} and Lemma \ref{lem:5.2}, we transfer all
further issues in the proof of \cite[Proposition 12.1]{FOS1} to the group $G'$, which is
treated in \cite{FOS1}. The generalization from simple adjoint groups to all adjoint
groups in \cite[Proposition 12.2 and Lemma 16.1]{FOS1} works equally well for ramified groups.
We note that restriction of scalars is dealt with in \cite[Appendix]{FOS1}, which is 
already written in the generality of reductive groups. \qed \\

For non-adjoint reductive $F$-groups we have to be more careful. It appears that for 
semisimple $F$-groups the proof of Theorem \ref{thm:5.1} in \cite[\S 13--14]{FOS1} can be 
modified without too much trouble. However, the arguments for reductive $F$-groups with 
anisotropic centre in \cite[\S 15]{FOS1} do not easily carry over to ramified groups. The main
difference is that in the $F_\nr$-split case the inclusion $G_\der \to G$ induces a bijection 
$\Irr (G)_{\cusp,\unip} \to \Irr (G_\der)_{\cusp,\unip}$ \cite[Lemma 15.3]{FOS1}. For ramified 
groups this is just false, firstly because ramified anisotropic tori can admit 
nontrivial weakly unramified characters, secondly because the pullback map 
$\Irr (G)_{\cusp,\unip} \to \Rep (G_\der)$ need not preserve irreducibility, and thirdly because
not all elements of $\Irr (G_\der)_{\cusp,\unip}$ are contained in a representation pulled
back from $G$.
In view of this, we rather aim to extend the arguments from \cite[\S 13--14]{FOS1} to all
(possibly ramified) reductive $F$-groups $\mc G$ with anisotropic centre. 

Thus, we want to
reduce Theorem \ref{thm:5.1} for $\mc G$ to Theorem \ref{thm:5.1} for its adjoint group
$\mc G_\ad$. One problem is that, in contrast with the $F_\nr$-split case, the natural
map $\Omega_G \to \Omega_{G_\ad}$ need not be injective. Equivalently, the natural map
\begin{equation}\label{eq:5.7}
X_\Wr (G_\ad) \cong (Z({G^\vee}_\Sc )^{\mb I_F})_\Fr \to (Z(G^\vee )^{\mb I_F} )_\Fr 
\cong X_\Wr (G) 
\end{equation}
need not be surjective. Clearly, the image of \eqref{eq:5.7} is contained in the kernel
of the natural map to 
\[
X_\Wr (Z(\mc G)^\circ (F)) \cong \big( Z(G^\vee / {G^\vee}_\der )^{\mb I_F} \big)_\Fr .
\]
But even for ramified simple groups, \eqref{eq:5.7} fails to be surjective in two cases:
\begin{itemize}
\item $\mc G = SU_{2n} / \mu_{2n / d}$, where $\mu_k$ denotes the group scheme of $k$-th
roots of unity, $d \in 2 \N$ and $2n / d \in 2 \N$.
Then $G^\vee = SL_{2n}(\C) / \mu_d (\C)$, ${G^\vee}_\Sc = SL_{2n}(\C)$,
$(Z({G^\vee}_\Sc)^{\mb I_F})_\Fr = \{1,-1\}$ and $(Z(G^\vee)^{\mb I_F})_\Fr =
\{1,\exp (\pi i / d)\}$. 
\item $\mc G = SO_{2n}^*$. Then $G^\vee = SO_{2n}(\C)$, ${G^\vee}_\Sc = Spin_{2n}(\C)$,
$(Z({G^\vee}_\Sc)^{\mb I_F})_\Fr = \ker (Spin_{2n}(\C) \to SO_{2n}(\C))$ and
$(Z(G^\vee)^{\mb I_F})_\Fr = \{1,-1\}$.
\end{itemize}
We note that in the first case 
\[
(G^\vee)^{\mb I_F} = PSp_{2n}(\C) \times \langle \exp (\pi i / d) \rangle / 
\langle \exp (2\pi i / d) \rangle 
\]
and in the second case
\[
(G^\vee)^{\mb I_F} = O_{2n-1}(\C) = SO_{2n-1}(\C) \times \{1,-1\} .
\]
In both cases there are natural isomorphisms
\begin{equation}\label{eq:5.2}
(Z(G^\vee)^{\mb I_F})_\Fr = Z(G^\vee)^{\mb W_F} = Z(G^{\vee,\mb W_F}) \cong 
\pi_0 (G^{\vee,\mb I_F}) = \pi_0 (G^{\vee,\mb I_F})_\Fr.
\end{equation}
For all other simple groups $\mc G$, $G^{\vee,\mb I_F}$ is connected, see the proof
of Lemma \ref{lem:4.1}. With the list in Section \ref{sec:list}, one checks that for
any simple $F$-group $\mc G$, $\ker (\Omega_G \to \Omega_{G_\ad})$ is naturally 
isomorphic to $\pi_0 (\mc G')$ (and it is trivial when $\mc G$ is $F_\nr$-split).

For any homomorphism of connected reductive $F$-groups $\mc G \to \mc H$, we define
\[
\Omega_{G,H} := \mr{im}(\Omega_G \to \Omega_H) \quad \text{and} \quad
X_\Wr (H,G) := \mr{im}(X_\Wr (H) \to X_\Wr (G)) .
\]
For $\mc H = \mc G_\ad$ we obtain short exact sequences (dual to each other):
\begin{equation}\label{eq:5.3} 
\begin{array}{ccccccccc}
1 & \to & \ker (\Omega_G \to \Omega_{G_\ad}) & \to & \Omega_G & \to & \Omega_{G,G_\ad} & \to & 1 , \\
1 & \leftarrow & \Irr ( \ker (\Omega_G \to \Omega_{G_\ad}) ) & \leftarrow & X_\Wr (G) & 
\leftarrow & X_\Wr (G_\ad ,G) & \leftarrow & 1 .
\end{array}
\end{equation}
We note also that the image of $\ker ({G^\vee}_\Sc \to G^\vee)$ in $X_\Wr (G_\ad) \cong 
(Z({G^\vee}_\Sc)^{\mb I_F})_\Fr$ is
\begin{equation}\label{eq:5.5}
\Irr (\Omega_{G_\ad} / \Omega_{G,G_\ad}) = \ker (X_\Wr (G_\ad) \to X_\Wr (G) ) .
\end{equation}
For semisimple groups, the method from \eqref{eq:5.2} yields a group 
$\pi_0 (G^{\vee,\mb I_F})_\Fr$ isomorphic to $X_\Wr (G) / X_\Wr (G_\ad,G)$. 
It is naturally represented in $X_\Wr (G) \cong (Z(G^\vee)^{\mb I_F})_\Fr$, 
and forms a complement to $X_\Wr (G_\ad ,G)$. Thus both sequences \eqref{eq:5.3} split
for semisimple groups (but not necessarily for reductive groups).\\

For $\sigma_\ad \in \Irr_{\cusp,\unip}(P_{\mf f,\ad})$, let 
$\sigma \in \Irr_{\cusp,\unip}(P_{\mf f})$ be its pullback. Morris showed in 
\cite[Proposition 4.6]{Mor2} that $\sigma_\ad$ can be extended to a representation 
$\sigma_\ad^N$ of $N_{G_\ad} (P_{\mf f,\ad})$ (on the same vector space). Let $\sigma^N 
\in \Irr (N_G (P_{\mf f}))$ be the pullback of $\sigma_\ad^N$ along
$G \to G_\ad$. This construction shows that 
\begin{equation}
\sigma \text{ can be extended to } N_G (P_{\mf f}), \text{ via } N_{G_\ad}(P_{\mf f,\ad}) . 
\end{equation}
Another issue with Theorem 
\ref{thm:5.1} concerns the pullback of supercuspidal unipotent representations along the
canonical map $\sfq : G \to G_\ad$. Such a pullback is trivial on $Z(G)$ and it can never 
involve elements of $X_\Wr (G)$ outside $X_\Wr (G,G_\ad)$. Notice that in general $G / Z(G)$
is a proper subgroup of $G_\ad = \mc G_\ad (F)$. We consider $\Irr (G / Z(G))$ as a subset
of $\Irr (G)$, endowed all the relevant notions from Rep$(G)$. 

We note also that a $G_\ad$-orbit of facets of $\mc B (\mc G,F)$ can decompose into
several $G$-orbits. These are parametrized by $G_\ad / N_{G_\ad}(P_{\mf f,\ad}) G$ (in 
such a quotient $G$ is a shorthand for its image in $G_\ad$). This can cause the pullback 
of an irreducible $G_\ad$-representation to become reducible (as $G$-representation).

\begin{lem}\label{lem:5.3}
\enuma{
\item The $G$-constituents of $\sfq^* (\mr{ind}_{P_{\mf f,\ad}}^{G_\ad} \sigma_\ad)$
are the representations 
\[
\mr{Ad}(g)^* \mr{ind}_{N_G (P_{\mf f})}^G (\sigma^N) =
\mr{ind}_{N_G (P_{g \mf f})}^G (g \cdot \sigma^N) ,
\]
where $g \in G_\ad$ and $\sigma^N$ is
an extension of $\sigma$ to $N_G (P_{\mf f})$ such that $Z(G) \subset \ker (\sigma^N)$.
\item Tensoring with weakly unramified characters provides a bijection
\[
X_\Wr (G) \times_{X_\Wr (G_\ad,G)} \Irr (G / Z(G))_{(P_{\mf f},\sigma)} 
\to \Irr (G)_{(P_{\mf f},\sigma)} .
\]
In particular $\Irr ( \ker (\Omega_G \to \Omega_{G_\ad}) )$ acts freely on 
$\Irr (G)_{(P_{\mf f},\sigma)} / X_\Wr (G_\ad,G) $.
}
\end{lem}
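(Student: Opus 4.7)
The plan is to prove (a) by combining the classification \eqref{eq:2.6} of supercuspidal unipotent representations with a Mackey/Frobenius analysis of pullback along $\sfq$, and then deduce (b) by analyzing how tensoring with $X_\Wr(G)$ interacts with central characters. To begin, I would decompose
\begin{equation*}
\mr{ind}_{P_{\mf f,\ad}}^{G_\ad}\sigma_\ad \;=\; \bigoplus_{\chi \in \Irr(\Omega_{G_\ad,\mf f})} \mr{ind}_{N_{G_\ad}(P_{\mf f,\ad})}^{G_\ad}(\sigma_\ad^N \otimes \chi)
\end{equation*}
via \eqref{eq:2.13} and transitivity of induction. Any irreducible $G$-constituent $\pi$ of the pullback is trivial on $Z(G)$, and Frobenius reciprocity gives $\sigma \subset \pi|_{P_{\mf f}}$, so $\pi$ is supercuspidal unipotent in the Bernstein component of $(P_{\mf f},\sigma)$. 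Applying \eqref{eq:2.6} yields $\pi \cong \mr{ind}_{N_G(P_{\mf f'})}^G(\tau^N)$; since $\tau^N|_{P_{\mf f'}}$ restricts from (a $G_\ad$-conjugate of) $\sigma_\ad$, we may write $\mf f' = g\mf f$ and $\tau^N = g\cdot\sigma^N$, producing the asserted form with $Z(G)\subset\ker(\sigma^N)$. For the reverse inclusion I would observe that each $\mr{Ad}(g)^*\mr{ind}_{N_G(P_{\mf f})}^G(\sigma^N)$ descends to $\sfq(G)$, and by Frobenius reciprocity relative to $\sfq(G) \subset G_\ad$ embeds into $\sfq^*$ of some $\mr{ind}_{N_{G_\ad}(P_{g\mf f,\ad})}^{G_\ad}(\sigma_\ad^N\otimes\chi')$, which is $G_\ad$-conjugate to a summand of the display above.

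For (b), define $\Phi\colon (\chi,\pi) \mapsto \chi \otimes \pi$. Well-definedness on the fiber product is immediate, since every $\chi_0 \in X_\Wr(G_\ad,G)$ is pulled back from $G_\ad$ along $\sfq$ and is therefore trivial on $Z(G)$, while weak unramifiedness keeps $\chi_0 \otimes \pi$ inside $\Irr(G/Z(G))_{(P_{\mf f},\sigma)}$. The image plainly lies in $\Irr(G)_{(P_{\mf f},\sigma)}$. For surjectivity, given $\pi = \mr{ind}_{N_G(P_{\mf f})}^G(\sigma^N) \in \Irr(G)_{(P_{\mf f},\sigma)}$, its central character $\omega_\pi$ is trivial on $Z(G)\cap P_{\mf f}$ (as $\sigma$ is inflated from the finite reductive quotient) and is in particular a weakly unramified character of $Z(G)$. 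Using the identification $X_\Wr(G) \cong (Z(G^\vee)^{\mb I_F})_\Fr$ from Section \ref{sec:list}, I would lift $\omega_\pi$ to some $\chi \in X_\Wr(G)$, so that $\chi^{-1}\otimes\pi \in \Irr(G/Z(G))_{(P_{\mf f},\sigma)}$.

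Injectivity reduces to the identification
\begin{equation*}
\{\chi \in X_\Wr(G) : \chi|_{Z(G)} = 1\} \;=\; X_\Wr(G_\ad,G),
\end{equation*}
which I expect to be the main obstacle. The inclusion $\supset$ is automatic. For $\subset$, I would apply $X_\Wr(G) \cong \Irr(\Omega_G)$ and Pontryagin-dualize the exact sequence \eqref{eq:5.3}, reducing the claim to showing that the Kottwitz map $Z(G) \to \Omega_G$ has image equal to $\ker(\Omega_G \to \Omega_{G_\ad})$; one containment follows from $Z(G) = \ker(\sfq)$, while the reverse should be read off from the fifth description of $\Omega_G$ in Section \ref{sec:list} by comparing the cocharacter lattices of $\mc G$ and $\mc G_\ad$. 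Once this identification is in hand, the final assertion---that $\Irr(\ker(\Omega_G \to \Omega_{G_\ad}))$ acts freely on $\Irr(G)_{(P_{\mf f},\sigma)}/X_\Wr(G_\ad,G)$---is an immediate consequence of the bijection and the short exact sequence \eqref{eq:5.3}.
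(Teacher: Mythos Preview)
Your approach is correct and essentially the same as the paper's, just considerably more detailed. The paper's proof of (a) is the one-line ``follows from \eqref{eq:2.13} and \eqref{eq:2.6}'', which is exactly your Mackey/Frobenius argument; for (b) the paper simply asserts that, by part (a), $\chi \in X_\Wr(G)$ preserves $\Irr(G/Z(G))_{(P_{\mf f},\sigma)}$ if and only if $\chi \in X_\Wr(G_\ad,G)$, and then invokes \eqref{eq:2.13} and \eqref{eq:5.3}. This is precisely your key identity $\{\chi \in X_\Wr(G) : \chi|_{Z(G)} = 1\} = X_\Wr(G_\ad,G)$, which the paper leaves implicit while you correctly isolate it and sketch its proof via the Kottwitz description of $\Omega_G$. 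Your surjectivity argument via lifting the central character through $X_\Wr(G) \cong \Irr(\Omega_G)$ is a minor variation on the paper's use of \eqref{eq:2.13} to enumerate all extensions $\sigma^N$, but both lead to the same place.
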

\begin{proof}
(a) This follows from \eqref{eq:2.13} and \eqref{eq:2.6}. \\
(b) By part (a) tensoring with $\chi \in X_\Wr (G)$ maps any element of 
$\Irr (G / Z(G))_{(P_{ \mf f},\sigma)}$ to $\Irr (G / Z(G))_{(P_{\mf f},\sigma)}$  
if and only if $\chi \in X_\Wr (G_\ad,G)$, and then $\chi$ stabilizes that set entirely.
Combine that with \eqref{eq:2.13} and \eqref{eq:5.3}. 
\end{proof}

On the Galois side something similar happens. Not every unramified $\phi \in \Phi (G)$ 
can be lifted along $\sfq^\vee : {G_\ad}^\vee \to G^\vee$ to an element of $\Phi (G_\ad)$. 

\begin{lem}\label{lem:5.4}
\enuma{
\item $\phi \in \Phi_\nr (G)$ can be lifted to an element of $\Phi_\nr (G_\ad)$ if and only
if $\phi (\Fr) \Fr^{-1}$ lies in $({G^\vee}_\der)^{\mb I_F,\circ}(1 - \Fr)(G^{\vee,\mb I_F})$.
\item The action of any $\chi \in X_\Wr (G)$ maps any element of $\sfq^\vee (\Phi_\nr (G_\ad))$ 
to $\sfq^\vee (\Phi_\nr (G_\ad))$ if and only if $\chi \in X_\Wr (G_\ad,G)$. This provides
a bijection
\[
X_\Wr (G) \times_{X_\Wr (G_\ad, G)} \sfq^\vee (\Phi_\nr (G_\ad)) \to
\Phi_\nr (G) .
\]
}
\end{lem}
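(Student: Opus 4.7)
The plan is to reduce both parts to questions about the semisimple datum $s = \phi(\Fr)\Fr^{-1}$, using that the $SL_2$-part always lifts.

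For part (a), I first recall that an unramified L-parameter $\phi$ is determined, up to $G^\vee$-conjugacy, by a pair $(s, u)$ where $s \in (G^\vee)^{\mb I_F}$ is the semisimple element $\phi(\Fr)\Fr^{-1}$ and $u = \phi |_{SL_2(\C)}$ is an algebraic homomorphism with image in $(G^\vee)^{\mb I_F}$, subject to the relation $s^{-1} u(x) s = \Fr(u(x))$. The equivalence on $s$ is $s \sim h s \Fr(h)^{-1}$ for $h \in (G^\vee)^{\mb I_F}$. Since $SL_2(\C)$ is simply connected and lands in the derived group, $u$ always lifts canonically to a homomorphism $SL_2(\C) \to G^\vee_\Sc$ whose image is automatically $\mb I_F$-invariant, so only the obstruction on $s$ matters. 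Therefore $\phi$ lifts iff the equivalence class of $s$ meets $\sfq^\vee((G^\vee_\Sc)^{\mb I_F})$.

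Next I identify the subgroup $\sfq^\vee((G^\vee_\Sc)^{\mb I_F}) \subset (G^\vee_\der)^{\mb I_F}$. By Steinberg's theorem (applied to the semisimple $\mb I_F$-action on the simply connected group $G^\vee_\Sc$) the group $(G^\vee_\Sc)^{\mb I_F}$ is connected. Its image is then a connected subgroup of $(G^\vee_\der)^{\mb I_F}$, with the same Lie algebra $(\mf g^\vee_\der)^{\mb I_F}$ as the identity component $(G^\vee_\der)^{\mb I_F,\circ}$; hence the image equals $(G^\vee_\der)^{\mb I_F,\circ}$. Combining, $\phi$ lifts iff $s = h s' \Fr(h)^{-1}$ for some $s' \in (G^\vee_\der)^{\mb I_F,\circ}$ and some $h \in (G^\vee)^{\mb I_F}$. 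Writing $h s' \Fr(h)^{-1} = (h s' h^{-1})(h\Fr(h)^{-1})$ and using that $(G^\vee_\der)^{\mb I_F,\circ}$ is normal in $(G^\vee)^{\mb I_F}$ (being characteristic in $G^\vee_\der$), this lies in $(G^\vee_\der)^{\mb I_F,\circ}(1-\Fr)(G^{\vee,\mb I_F})$; conversely any element of the latter can be rewritten in the form $h s' \Fr(h)^{-1}$ by the same manipulation. This establishes (a).

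For part (b), the ``if'' direction is formal: for $\chi_\ad \in X_\Wr(G_\ad)$ mapping to $\chi \in X_\Wr(G)$, one has $\chi \cdot \sfq^\vee(\phi_\ad) = \sfq^\vee(\chi_\ad \cdot \phi_\ad)$, still liftable. For the converse, twisting by $\chi$ multiplies $s$ by a representative $\hat\chi \in Z(G^\vee)^{\mb I_F}$; by part (a), preserving liftability for every $\phi \in \sfq^\vee(\Phi_\nr (G_\ad))$ is equivalent to $\hat\chi$ belonging to the subgroup $(G^\vee_\der)^{\mb I_F,\circ}(1-\Fr)(G^{\vee,\mb I_F})$ of $(G^\vee)^{\mb I_F}$ (using that this set is in fact a subgroup, since it is the preimage of a subgroup of the commutative quotient $(G^\vee)^{\mb I_F}/(G^\vee_\der)^{\mb I_F,\circ}$). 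A diagram chase applied to the short exact sequence $1 \to Z(G^\vee_\Sc) \to G^\vee_\Sc \to G^\vee_\der \to 1$, together with the compatibility $(Z(G^\vee_\Sc)^{\mb I_F})_\Fr = X_\Wr(G_\ad)$ and $(Z(G^\vee)^{\mb I_F})_\Fr = X_\Wr(G)$, identifies the image of $Z(G^\vee)^{\mb I_F} \cap (G^\vee_\der)^{\mb I_F,\circ}(1-\Fr)(G^{\vee,\mb I_F})$ in $X_\Wr(G)$ with the image $X_\Wr(G_\ad,G)$ of $X_\Wr(G_\ad)$, completing the ``only if'' direction. The bijection follows formally once one verifies transitivity: $X_\Wr(G) \cdot \sfq^\vee(\Phi_\nr(G_\ad)) = \Phi_\nr(G)$, equivalently $Z(G^\vee)^{\mb I_F} \cdot (G^\vee_\der)^{\mb I_F,\circ}(1-\Fr)(G^{\vee,\mb I_F}) = (G^\vee)^{\mb I_F}$; this can be read off case-by-case from Section~\ref{sec:list} by inspecting $\pi_0\bigl((G^\vee)^{\mb I_F}\bigr)$ as computed in the proof of Lemma~\ref{lem:4.1}.

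The main obstacle is the cohomological bookkeeping in the converse half of (b): one must show that the intersection $Z(G^\vee)^{\mb I_F} \cap (G^\vee_\der)^{\mb I_F,\circ}(1-\Fr)(G^{\vee,\mb I_F})$, considered modulo $(1-\Fr)Z(G^\vee)^{\mb I_F}$, is exactly $X_\Wr(G_\ad,G)$ and no more. The cleanest tool is the snake lemma applied to the $1 \to Z(G^\vee_\Sc) \to G^\vee_\Sc \to G^\vee_\der \to 1$ sequence viewed through the functor ``$\mb I_F$-invariants followed by $\Fr$-coinvariants''; the surjectivity part of the bijection is then largely a parallel computation.
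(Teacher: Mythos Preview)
Your argument for part (a) is essentially the paper's, with one cosmetic difference: you justify the connectedness of $(G^\vee_\Sc)^{\mb I_F}$ by Steinberg's theorem, while the paper simply says ``by reduction to the absolutely simple case and classification''. Steinberg's theorem is clean when $\mb I_F$ acts through a cyclic quotient, but in the ${}^6 D_4$ case the action is through $S_3$, so a one-line invocation is not quite enough; the paper's classification remark covers this, and so does the explicit computation in the proof of Lemma~\ref{lem:4.1}. Otherwise the reduction to the semisimple datum $s$, the identification $\sfq^\vee\bigl((G^\vee_\Sc)^{\mb I_F}\bigr)=(G^\vee_\der)^{\mb I_F,\circ}$, and the twisted-conjugacy bookkeeping match the paper exactly.

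For part (b) your route is more circuitous than necessary. The paper does not set up a snake-lemma diagram chase; instead it argues directly: if both $\phi$ and $\chi\phi$ lift, then by part (a) the class $\chi$ is represented by some $z\in (G^\vee_\der)^{\mb I_F,\circ}\cap Z(G^\vee)^{\mb I_F}$, and such a $z$ visibly lifts to $Z(G^\vee_\Sc)^{\mb I_F}$, so $\chi\in X_\Wr(G_\ad,G)$. This two-line argument replaces your ``main obstacle''. For surjectivity, you propose a case check via Section~\ref{sec:list}; the paper packages the same information more uniformly by invoking \eqref{eq:5.2} to get that $Z(G^\vee_\der)^{\mb I_F}$ already surjects onto $\pi_0\bigl((G^\vee_\der)^{\mb I_F}\bigr)$, hence $Z(G^\vee)^{\mb I_F}$ surjects onto $G^{\vee,\mb I_F}/(G^\vee_\der)^{\mb I_F,\circ}$. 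Your cohomological framework would certainly work, but the paper's direct lift is shorter and avoids the bookkeeping you flagged.
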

\begin{proof}
(a) By reduction to the absolutely simple case and classification one sees that 
${G_\ad}^{\vee,\mb I_F} = ({G^\vee}_\Sc)^{\mb I_F}$ is always connected. Hence its image in 
$G^\vee$ is precisely $({G^\vee}_\der)^{\mb I_F,\circ}$, and $\sfq^\vee (\phi_\ad) (\Fr) \Fr^{-1}$
always lies in $({G^\vee}_\der)^{\mb I_F,\circ}$. The equivalence relation in $\Phi_\nr (G)$ still
allows for conjugation by elements of $G^{\vee,\mb I_F}$. That can change $\sfq^\vee (\phi_\ad) 
(\Fr) \Fr^{-1}$ by elements of $(1 - \Fr) (G^{\vee,\mb I_F})$.\\
(b) Let $\phi \in \sfq^\vee (\Phi_\nr (G_\ad))$ and let $\chi \in X_\Wr (G)$ such that 
$\chi \phi \in \sfq^\vee (\Phi_\nr (G_\ad))$. From the proof of part (a) we see that $\chi$ can 
be represented by an element $z \in {G_\der}^{\vee,\mb I_F,\circ} \cap Z(G^\vee)^{\mb I_F}$.
Then $z$ can be lifted to an element of $Z({G^\vee}_\Sc)^{\mb I_F}$, so $\chi$ lies in the
image of $X_\Wr (G_\ad) \to X_\Wr (G)$.

From \eqref{eq:5.2} we see that $Z({G^\vee}_\der)^{\mb I_F} \to \pi_0 
\big( ({G^\vee}_\der)^{\mb I_F} \big)$ is surjective. Hence 
$Z(G^\vee)^{\mb I_F} \to G^{\vee,\mb I_F} / ({G^\vee}_\der)^{\mb I_F,\circ}$ is surjective as 
well. It follows that every $\phi \in \Phi_\nr (G)$ can be written as an element of $X_\Wr (G)$ 
times an element of $\sfq^\vee (\Phi_\nr (G_\ad))$. Combine that with part (a).
\end{proof}

\emph{Proof of Theorem \ref{thm:5.1} for reductive $F$-groups with anisotropic centre}\\
We analyse \cite[\S 13]{FOS1} in detail. Let $\pi_\ad$ be an irreducible constituent
of $\mr{ind}_{P_{\mf f,\ad}}^{G_\ad} (\sigma_\ad)$. Let $(\phi_\ad,\rho_\ad)$ be the enhanced
L-parameter of $\pi_\ad$, via Theorem \ref{thm:5.1} for $G_\ad$. Let $\pi$ be an irreducible
constituent of $\sfq^* (\pi_\ad)$ and put $\phi = \sfq^\vee (\phi_\ad)$.

Write $g' = [\Omega_{G_\ad} / \Omega_{G_\ad,\mf f} : \Omega_{G,G_\ad} / \Omega_{G,G_\ad,\mf f}]$. 
It is checked in \cite[p. 29]{FOS1} that $\sfa'_\ad = |\Omega_{G_\ad,\mf f}|$, 
$\sfb' = \sfb'_\ad$ and
\[
\sfa' = |\Omega_{G,\mf f}| g' = |\ker (\Omega_G \to \Omega_{G_\ad})| \, 
|\Omega_{G_\ad,\mf f} : \Omega_{G,G_\ad,\mf f}]^{-1} \sfa'_\ad .
\]
As in \cite[(13.3)]{FOS1}, let $N_{\phi_\ad} \subset \Omega_{G_\ad,\mf f}$ be such that
\[
X_\Wr (G_\ad )_{\phi_\ad} = \Irr (\Omega_{G_\ad})_{\phi_\ad} = \Irr (\Omega_{G_\ad} / N_{\phi_\ad}) .
\]
Then $\sfa_\ad = |N_{\phi_\ad}|$. Taking the above into account, \cite[Lemma 13.1]{FOS1} generalizes
with almost the same proof. It says:

\begin{lem}\label{lem:5.5}
Suppose that $\phi_\ad \in \Phi_\nr^2 (G_\ad)$.
\enuma{
\item $X_\Wr (G) = \Irr (\Omega_G)$ acts transitively on the collection of elements  
$\phi' \in \Phi_\nr^2 (G)$ which, for every $F$-simple factor $\mc G_i$ of $\mc G$, have the 
same $\gamma$-factor (at $s=0$)
$\gamma (0, \mr{Ad}_{G_i^\vee)} \circ \phi',\psi)$ as $\phi = \sfq^\vee (\phi_\ad)$.
\item The stabilizer of $\phi \in \Phi_\nr^2 (G)$ in $X_\Wr (G)$ equals 
\[
\Irr \big( \Omega_{G,G_\ad} / (\Omega_{G,G_\ad} \cap N_{\phi_\ad}) \big) ,
\]
and it contains $\Irr (\Omega_{G,G_\ad} / \Omega_{G,G_\ad,\mf f})$.
\item $\sfa = | N_{\phi_\ad} \cap \Omega_{G,G_\ad} | \, |\ker (\Omega_G \to \Omega_{G_\ad}|$.
}
\end{lem}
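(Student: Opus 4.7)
The plan is to reduce Lemma \ref{lem:5.5} to its adjoint counterpart \cite[Lemma 13.1]{FOS1} via the parameterization $X_\Wr(G) \times_{X_\Wr(G_\ad,G)} \sfq^\vee(\Phi_\nr(G_\ad)) \to \Phi_\nr(G)$ from Lemma \ref{lem:5.4}. Two preliminary observations are used throughout: since $Z(\mc G)$ is $F$-anisotropic, the adjoint representations $\mr{Ad}_{G^\vee}$ and $\mr{Ad}_{G_\ad^\vee}$ act on the same Lie algebra, so the adjoint $\gamma$-factors for $\phi$ and $\phi_\ad$ coincide; moreover, twisting by $\chi \in X_\Wr(G) \cong (Z(G^\vee)^{\mb I_F})_\Fr$ affects only $\phi(\Fr)$ by a central scalar, hence leaves the adjoint $\gamma$-factor unchanged.

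For (a), given $\phi' \in \Phi_\nr^2(G)$ with the same $\gamma$-factor as $\phi$ on each simple factor, Lemma \ref{lem:5.4}(b) lets us write $\phi' = \chi' \sfq^\vee(\phi_\ad')$, and the observations above force $\phi_\ad'$ to have the same $\gamma$-factor as $\phi_\ad$. The adjoint case provides $\eta \in X_\Wr(G_\ad)$ with $\phi_\ad' = \eta \phi_\ad$, whose image $\bar\eta \in X_\Wr(G_\ad,G) \subseteq X_\Wr(G)$ then yields $\phi' = \chi' \bar\eta \cdot \phi$.

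For (b), any $\chi \in X_\Wr(G)_\phi$ must preserve the subset $\sfq^\vee(\Phi_\nr(G_\ad))$ (as $\chi \phi = \phi$ lies in it), hence lies in $X_\Wr(G_\ad,G)$ by Lemma \ref{lem:5.4}(b). Lifting $\chi$ to $\eta \in X_\Wr(G_\ad)$ via \eqref{eq:5.7}, the identity $\sfq^\vee(\eta \phi_\ad) = \sfq^\vee(\phi_\ad)$ holds; the fibers of $\sfq^\vee$ on $\Phi_\nr(G_\ad)$ are precisely the orbits under $\ker(X_\Wr(G_\ad) \to X_\Wr(G)) = \Irr(\Omega_{G_\ad}/\Omega_{G,G_\ad})$ (by \eqref{eq:5.5}), so after multiplying $\eta$ by such an element we may assume $\eta \phi_\ad = \phi_\ad$, i.e., $\eta \in \Irr(\Omega_{G_\ad}/N_{\phi_\ad})$. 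The restriction map $\Irr(\Omega_{G_\ad}) \to \Irr(\Omega_{G,G_\ad})$ then identifies $X_\Wr(G)_\phi$ with $\Irr(\Omega_{G,G_\ad}/(\Omega_{G,G_\ad} \cap N_{\phi_\ad}))$ (using extendability of characters on subgroups of finite abelian groups). The stated containment follows from $N_{\phi_\ad} \subseteq \Omega_{G_\ad,\mf f}$, which gives $\Omega_{G,G_\ad} \cap N_{\phi_\ad} \subseteq \Omega_{G,G_\ad,\mf f}$.

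For (c), combining (a) and (b) gives the $X_\Wr(G)$-orbit of $\phi$ a size of
\[
\frac{|\Omega_G|\,|\Omega_{G,G_\ad} \cap N_{\phi_\ad}|}{|\Omega_{G,G_\ad}|} = |\ker(\Omega_G \to \Omega_{G_\ad})|\cdot|N_{\phi_\ad} \cap \Omega_{G,G_\ad}|,
\]
using \eqref{eq:5.3}. The main obstacle is verifying that this orbit exhausts the full $\sfa$-set, which a priori allows $\gamma$-factors merely $\Q^\times$-proportional rather than equal. By Lemma \ref{lem:5.4}, any such candidate $\tilde\phi$ takes the form $\chi'' \sfq^\vee(\tilde\phi_\ad)$ with $\tilde\phi_\ad \in \Phi_\nr^2(G_\ad)$ having $\gamma$-factor $\Q^\times$-proportional to $\gamma(\phi_\ad)$ on each simple factor; the adjoint identity $\sfa_\ad = |N_{\phi_\ad}|$ then forces $\tilde\phi_\ad$ into the $X_\Wr(G_\ad)$-orbit of $\phi_\ad$, so the $\gamma$-factors in fact match exactly. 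Preservation of $G$-relevance and cuspidality of enhancements throughout is routine from Lemma \ref{lem:4.3} and Proposition \ref{prop:4.4}.
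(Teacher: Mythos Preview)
Your proof is correct and follows the paper's approach exactly: the paper simply asserts that \cite[Lemma 13.1]{FOS1} ``generalizes with almost the same proof'', and your argument supplies precisely that generalization, reducing to the adjoint case via the parametrization of Lemma~\ref{lem:5.4} and tracking the extra factor $|\ker(\Omega_G \to \Omega_{G_\ad})|$. One minor point: your final sentence cites Lemma~\ref{lem:4.3} and Proposition~\ref{prop:4.4} for the preservation of relevance and cuspidality of enhancements, but those results concern the passage between $G$ and its companion group $G'$, not between $G$ and $G_\ad$; the correct justification is the behaviour of cuspidal enhancements under the isogeny $\sfq^\vee$ as in \cite[\S 6--7]{AMS1} (or the explicit arguments in \cite[\S 13]{FOS1}).
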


The other arguments from \cite[\S 13]{FOS1} also generalize, the main difference is that we often
have to replace $\Omega_G$ by $\Omega_{G,G_\ad}$. In particular \cite[Lemma 13.2]{FOS1} becomes
\[
\mc A_\phi / \mc A_{\phi_\ad} \cong \Irr (\Omega_{G_\ad} / \Omega_{G,G_\ad} N_{\phi_\ad}) 
\]
and \cite[Lemma 13.4]{FOS1} becomes
\[
\sfb = g' [\Omega_{G_\ad,\mf f} : \Omega_{G,G_\ad} N_{\phi_\ad}]^{-1} \sfb_\ad .  
\]
We turn to \cite[\S 14]{FOS1}. With the above modifications to \cite[\S 13]{FOS1}, the proof in
\cite[\S 14]{FOS1} extends directly to possibly ramified reductive $F$-groups with anisotropic
centre. The enhanced L-parameters are constructed first for $G$-representations contained in 
$\sfq^* (\pi_\ad)$ for some $\pi_\ad \in \Irr_{\cusp,\unip}(G_\ad)$, and then extended 
$X_\Wr (G)$-equivariantly to the whole of $\Irr_{\cusp,\unip}(G)$ by means of Lemmas 
\ref{lem:5.3}.b and \ref{lem:5.4}.b. This establishes Theorem \ref{thm:5.1} for $\mc G$,
except part (e) and (when $\mc G$ is not semisimple) parts (b) and (c).

We note that for an anisotropic $F$-torus $\mc T$ the above parametrization agrees with the 
natural isomorphism
\begin{equation}\label{eq:5.4}
\Irr_\unip (T) = X_\Wr (T) \cong (T^{\mb I_F})_\Fr \cong \Phi_\nr (T),
\end{equation}
which is a special case of the LLC for tori. Since \eqref{eq:5.4} is natural, we obtain 
property (b) for all reductive $F$-groups with anisotropic centre. For property (c), the 
compatibility with almost direct products, we refer to the proof of \cite[Proposition 15.6]{FOS1}
in combination with \eqref{eq:5.4}.

Next we consider the proof of the HII conjecture for $\Irr_{\cusp,\unip}(G)$ in 
\cite[Lemmas 16.2 and 16.3]{FOS1}. This also goes by reduction to adjoint groups. 
Both formal degrees of $G$-representations and adjoint $\gamma$-factors of L-parameters for
$G$ are invariant under the action of $X_\Wr (G)$. In view of Lemmas \ref{lem:5.3}.b and 
\ref{lem:5.4}.b, this means that it suffices to check the HII conjectures for 
$\Irr (G / Z(G))_{\cusp,\unip}$ and $\sfq^\vee (\Phi_\nr (G_\ad))$. 

Let $Z(G)^\circ_1$ be the unique parahoric subgroup of $Z(\mc G)^\circ (F)$ and let 
$\overline{Z(\mc G)^\circ}(k_F)$ be its finite reductive quotient. We note that
$\overline{\mc G_{\mf f}^\circ}$ is isogenous to $\overline{\mc G_{\ad,\mf f}^\circ} \times
\overline{Z(\mc G)^\circ}$. By \cite[Proposition 1.4.12.c]{GeMa} these two groups have the
same number of $k_F$-points. That and \eqref{eq:2.10} lead to
\begin{multline}\label{eq:5.8}
\mr{vol}(P_{\mf f}) = |\overline{\mc G_{\mf f}^\circ}(k_F)|
q_F^{-(\mb a (\mf g^\vee) + \dim \overline{\mc G_{\mf f}^\circ} + \dim G^{\vee,\mb I_F}) / 2} = \\
|\overline{\mc G_{\ad,\mf f}^\circ}(k_F)| q_F^{-(\mb a (\mf g_\ad^\vee) + \dim \overline{\mc 
G_{\ad,\mf f}^\circ} + \dim G_\ad^{\vee,\mb I_F}) / 2} |\overline{Z(\mc G)^\circ}(k_F)|
q_F^{-(\mb a (Z(\mf g^\vee)) + \dim \overline{Z(\mc G)^\circ} + \dim Z(\mc G)^\circ) / 2} \\
= \mr{vol}(P_{\mf f,\ad}) \mr{vol}(Z(G)^\circ_1) .
\end{multline}
Write $\pi_\ad = \mr{ind}_{N_{G_\ad}(P_{\mf f,\ad})}^{G_\ad}(\sigma_\ad^N)$ and let
$\pi \in \Irr (G)_{\cusp,\unip}$ be a direct summand of $\sfq^* (\pi_\ad)$. Then \eqref{eq:2.8}
and \eqref{eq:5.8} yield
\begin{multline}\label{eq:5.9}
\frac{\mr{fdeg}(\pi, \mu_{G,\psi})}{\mr{fdeg}(\pi_\ad, \mu_{G_\ad,\psi})} = 
\frac{\dim (\sigma) |\overline{\mc G_{\ad,\mf f}^\circ}(k_F)| q_F^{-(\mb a (\mf g_\ad^\vee) + 
\dim \overline{\mc G_{\ad,\mf f}^\circ} + \dim G_\ad^{\vee,\mb I_F}) / 2} |\Omega_{G_\ad,\mf f}|}{
\dim (\sigma_\ad) |\overline{\mc G_{\mf f}^\circ}(k_F)| q_F^{-(\mb a (\mf g^\vee) + \dim 
\overline{\mc G_{\mf f}^\circ} + \dim G^{\vee,\mb I_F}) / 2} |\Omega_{G,\mf f}|} \\
= \frac{q_F^{(\mb a (Z(\mf g^\vee)) + \dim \overline{Z(\mc G)^\circ} + \dim Z(\mc G)^\circ) / 2} 
|\Omega_{G_\ad,\mf f}|}{|\Omega_{G,\mf f}| \, |\overline{Z(\mc G)^\circ}(k_F)|} =
\frac{|\Omega_{G_\ad,\mf f}|}{|\Omega_{G,\mf f}| \mr{vol}(Z(G)^\circ_1) } . 
\end{multline}
By construction $\sfq^\vee (\phi_{\pi_\ad}) = \phi_\pi$ and $\mc A_{\phi_{\pi_\ad}} \subset
\mc A_{\phi_\pi}$. Equations \cite[(16.7) and (16.8)]{FOS1} must be modified to
\begin{equation}\label{eq:5.10}
\frac{\dim (\rho_\pi)}{\dim (\rho_{\pi_\ad})} = \frac{|\Omega_{G_\ad, f}| \, 
| \Omega_{G,\mf f}^\circ \cap N_{\phi_{\pi_\ad}}|}{|\Omega_{G, \mf f}^\circ | \, 
| N_{\phi_{\pi_\ad}} |}
\end{equation}
while \cite[(16.9) and (16.10)]{FOS1} become
\begin{equation}\label{eq:5.6}
\frac{|S_{\phi_\pi}|}{|S_{\phi_\pi}^\sharp|} = 
\frac{|Z({G^\vee}_\Sc)^{\mb W_F}|}{|Z(G^\vee)^{\mb W_F}|} = 
\frac{|\Omega_{G_\ad}|}{|\Omega_G|} .
\end{equation}
Following \eqref{eq:5.6}, one obtains
\begin{equation}\label{eq:5.11}
\frac{|S_{\phi_\pi}^\sharp|}{|S^\sharp_{\rho_{\pi_\ad}}|} =
\frac{|\Omega_G|}{|\Omega_{G_\ad}|} [\Omega_{G_\ad} : \Omega_G^\circ N_{\phi_{\pi_\ad}}] =
|\ker (\Omega_G \to \Omega_{G_\ad})| \frac{|\Omega_G^\circ \cap N_{\phi_{\pi_\ad}}|}{
|N_{\phi_{\pi_\ad}}|} .
\end{equation}
By \cite[(16.14)]{FOS1}
\begin{equation}\label{eq:5.12}
\begin{aligned}
\gamma (s, \mr{Ad}_{G^\vee} \circ \phi_\pi, \psi) & = \gamma (s, \mr{Ad}_{G^\vee_\der} \circ 
\phi_\pi, \psi) \gamma (s, \mr{Ad}_{Z(G^\vee)^\circ} \circ \phi_\pi, \psi) \\
& = \gamma (s, \mr{Ad}_{{G_\ad}^\vee} \circ \phi_{\pi_\ad}, \psi) 
\gamma (s, \mr{Ad}_{G^\vee / {G^\vee}_\der} \circ \mr{id}_{\mb W_F}, \psi) .
\end{aligned} 
\end{equation}
We note that the formal degree of a unitary character of $Z(\mc G)^\circ (F)$ is
\[
\mr{vol}(Z(\mc G)^\circ (F))^{-1} = |\Omega_{Z(\mc G)^\circ (F)}|^{-1} 
\mr{vol}(Z(G)^\circ_1)^{-1} . 
\]
It was shown in \cite[Lemma 3.5 and Correction]{HII} that
\begin{equation}\label{eq:5.13}
|\gamma (0, \mr{Ad}_{G^\vee / {G^\vee}_\der} \circ \mr{id}_{\mb W_F}, \psi) | =
|\Omega_{Z(\mc G)^\circ (F)}| \, \mr{fdeg}(\mr{triv}_{Z(\mc G)^\circ (F)}) = 
\mr{vol}(Z(G)^\circ_1 )^{-1} .
\end{equation}
From \eqref{eq:5.10}--\eqref{eq:5.13} we deduce
\begin{multline}\label{eq:5.14}
\frac{\dim (\rho_\pi) | S^\sharp_{\phi_{\pi_\ad}} | \, |\gamma (0, \mr{Ad}_{G^\vee} \circ 
\phi_\pi, \psi)|}{\dim (\rho_{\pi_\ad}) | S^\sharp_{\phi_\pi} | \, |\gamma (0, 
\mr{Ad}_{{G_\ad}^\vee} \circ \phi_{\pi_\ad}, \psi)|} = \\
\frac{|\Omega_{G_\ad,\mf f}| \, |\Omega_{G,G_\ad,\mf f} \cap N_{\phi_{\pi_\ad}}|}{
|\Omega_{G,G_\ad,\mf f}| \, |\Omega_{G,G_\ad} \cap N_{\phi_{\pi_\ad}}| \,
|\ker (\Omega_G \to \Omega_{G_\ad})| \mr{vol}(Z(G)^\circ_1 )} .
\end{multline}
As $N_{\phi_{\pi_\ad}} \subset \Omega_{G_\ad,\mf f}$, this simplifies to
\begin{equation}\label{eq:5.15}
\frac{|\Omega_{G_\ad,\mf f}|}{|\Omega_{G,G_\ad,\mf f}| \, 
|\ker (\Omega_G \to \Omega_{G_\ad})| \mr{vol}(Z(G)^\circ_1 )} =
\frac{|\Omega_{G_\ad,\mf f}|}{|\Omega_{G,\mf f}| \mr{vol}(Z(G)^\circ_1 )} . 
\end{equation}
By \eqref{eq:5.9}, the expressions \eqref{eq:5.15} and \eqref{eq:5.14} also equal
fdeg$(\pi,\mu_{G,\psi}) / \mr{fdeg}(\pi_\ad, \mu_{G_\ad,\psi})$. From the already established
HII conjectures for $\Irr (G_\ad)_{\unip,\cusp}$ we know that
\[
\mr{fdeg}(\pi_\ad, \mu_{G_\ad,\psi}) = \dim (\rho_{\pi_\ad}) | S^\sharp_{\phi_{\pi_\ad}} |^{-1}  
|\gamma (0, \mr{Ad}_{{G_\ad}^\vee} \circ \phi_{\pi_\ad}, \psi)| .
\]
With \eqref{eq:5.9}, \eqref{eq:5.14} and \eqref{eq:5.15} we conclude the analogous
equality for $\Irr (G)_{\cusp,\unip}$ holds. \qed \\

In fact the above shows more, namely that \cite[Theorem 2.2]{FOS1} holds for all reductive
$F$-groups with anisotropic centre. This concerns precise statements about the numbers
$\sfa, \sfb, \sfa', \sfb'$, in terms of subquotients of $\Omega_G$. In contrast with
\cite[\S 13--14]{FOS1}, the formulation of \cite[Theorem 2.2]{FOS1} does not have to be
adjusted to accomodate for ramified groups, it generalizes exactly as written.\\

\emph{Proof of Theorem \ref{thm:5.1} for reductive groups}\\
This can be derived from the case of reductive $F$-groups with anisotropic centre,
see \cite[p. 38--41 and p. 44]{FOS1}. For these arguments it does not matter whether $\mc G$
is ramified or not. The only small difference is that in one of the steps on \cite[p. 41]{FOS1} 
we should not restrict from $(\mc G_\der Z(\mc G)_a)(F)$ to $G_\der$, with our proof for
reductive $F$-groups with anisotropic centre that step already works with  
$(\mc G_\der Z(\mc G)_a)(F)$.

For later use we recall the main idea of the proof. Let $Z(\mc G)_s$ be the maximal 
$F$-split torus in $Z(\mc G)$. Then $\mc G / Z(\mc G)_s$ has $F$-anisotropic centre and 
$(\mc G / Z(\mc G)_s)(F) = G / Z(G)_s$ \cite[(15.6)]{FOS1}. Tensoring with weakly
unramified characters yields a natural bijection \cite[(15.8)]{FOS1}
\begin{equation}\label{eq:5.16}
X_\Wr (G) \underset{X_\Wr (G / Z(G)_s)}{\times} \Irr (G / Z(G)_s )_{\cusp,\unip} 
\to \Irr (G)_{\cusp,\unip} .
\end{equation}
Similarly twisting by $X_\Wr (G) \cong (Z(G^\vee)^{\mb I_F})_\Fr$ provides a natural
bijection \cite[(15.12)]{FOS1}
\begin{equation}\label{eq:5.17}
X_\Wr (G) \underset{X_\Wr (G / Z(G)_s)}{\times} \Phi_{\nr,e}(G / Z(G)_s) \to
\Phi_{\nr,e}(G) .
\end{equation}
Combining \eqref{eq:5.16} and \eqref{eq:5.17} with Theorem \ref{thm:5.1} for $G/ Z(G)_s$,
one obtains the desired $X_\Wr (G)$-equivariant bijection 
$\Irr (G)_{\cusp,\unip} \to \Phi_{\nr,e}(G). \qed$

\section{A local Langlands correspondence}
\label{sec:LLC}

We want to generalize the results leading to a local Langlands correspondence for 
unipotent representations in \cite{SolLLC} from $F_\nr$-split to arbitrary connected 
reductive $F$-groups $\mc G$. In the non-supercuspidal case these results rely mainly on
\cite{AMS1,AMS2,AMS3}, in which no restriction on $\mc G$ is placed. Sections 1, 2 and 3 of 
\cite{SolLLC} were also written in that generality. 

In \cite[\S 4]{SolLLC} it is assumed that the reductive 
groups split over $F_\nr$, but that is only to apply the main result of \cite{FOS1}.
If we replace the input for \cite[Theorem 4.1 and Proposition 4.2]{SolLLC} by
Theorem \ref{thm:5.1}, they apply to ramified connected reductive $F$-groups as well.

In \cite[Lemma 4.4]{SolLLC} a Hecke algebra $\mc H (G, \hat P_{\mf f}, \hat \sigma)$
as in \eqref{eq:3.9} is compared with Hecke algebra $\mc H (\mf s^\vee, \vec{v})$
constructed from enhanced L-parameters for an absolutely simple adjoint group $G$.
When $\mc G$ is moreover ramified and $\mc G'$ is its $F_\nr$-split companion group,
we showed in Theorem \ref{thm:3.1} that Theorem \ref{thm:2.3}.b induces an algebra
isomorphism 
\[
\mc H (G, \hat P_{\mf f}, \hat \sigma) \cong \mc H (G', \hat P_{\mf f'}, \hat \sigma') .
\]
The algebra $\mc H (\mf s^\vee, \vec{v})$ (see \cite[\S 3.3]{AMS3} and \cite[\S 2]{SolLLC})
is constructed from the group $Z^1_{{G^\vee}_\Sc} (\phi (\mb I_F))$ with the data
$\phi (\Fr), u_\phi$ and $\rho$. Here $(\phi,\rho)$ comes from the Bernstein component in
$\Phi_e (G)$ associated to $\Irr (G)_{(\hat P_{\mf f},\hat \sigma)}$ by 
\cite[Proposition 4.2]{SolLLC}, so $\phi$ is unramified. As $\mc G$ is adjoint,
$Z^1_{{G^\vee}_\Sc} (\phi (\mb I_F)) = (G^\vee)^{\mb I_F}$. In view of the comparison
results Lemma \ref{lem:4.1} and Proposition \ref{prop:4.4}, the data underlying 
$\mc H (\mf s^{'\vee}, \vec{z})$ are canonically isomorphic to those for 
$\mc H (\mf s^\vee,\vec{z})$. Hence $\mc H (\mf s^{\vee},\vec{v})$ is canonically isomorphic 
to the Hecke algebra $\mc H(\mf{s'}^\vee, \vec{v})$ constructed in the same way for $G'$. 

Recall from \eqref{eq:4.11} that the transfer between enhanced unramified L-parameters for 
$G$ and $G'$ reflects the transfer between cuspidal unipotent representations in Theorem 
\ref{thm:2.3}. The group $G'$ was already treated in \cite[Lemma 4.5]{SolLLC} and
\cite{LusUni1,LusUni2}. In this way we obtain algebra isomorphisms
\begin{equation}\label{eq:6.1}
\mc H (G, \hat P_{\mf f}, \hat \sigma) \cong \mc H (G', \hat P_{\mf f'}, \hat \sigma') \cong
\mc H(\mf{s'}^\vee, \vec{v}) \cong \mc H(\mf s^\vee, \vec{v}) .
\end{equation}
This means that \cite[Lemma 4.5]{SolLLC} holds for ramified $F$-groups. With that the entire
Section 4 of \cite{SolLLC} works for arbitrary connected reductive $F$-groups.
Now \cite[Theorem 5.1]{SolLLC} gives:

\begin{thm}\label{thm:6.4}
There exists a bijective local Langlands correspondence
\begin{equation}
\begin{array}{ccc}
\Irr (G)_\unip & \longleftrightarrow & \Phi_{\nr,e}(G) \\
\pi & \mapsto & (\phi_\pi, \rho_\pi) \\
\pi (\phi,\rho) & \text{\reflectbox{$\mapsto$}} & (\phi,\rho)
\end{array}
\end{equation}
\end{thm}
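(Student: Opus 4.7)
The plan is to invoke \cite[Theorem 5.1]{SolLLC} with the inputs just upgraded in Sections \ref{sec:unirep}--\ref{sec:supercusp}. First I would decompose $\Rep(G)_\unip$ via the Bernstein decomposition \eqref{eq:5} into blocks $\Rep(G)_{(\hat P_{\mf f}, \hat \sigma)}$ indexed by $G$-conjugacy classes of cuspidal unipotent types, each equivalent via \eqref{eq:3.1} to $\Mod(\mc H(G, \hat P_{\mf f}, \hat \sigma))$. In parallel, on the Galois side $\Phi_{\nr,e}(G)$ admits an analogous decomposition into Bernstein components $\Phi_e(G)^{\mf s^\vee}$ indexed by cuspidal supports $(\mc L, \phi_L, \rho_L)$ with $\phi_L$ unramified; each component is described by an affine Hecke algebra $\mc H(\mf s^\vee, \vec v)$ as in \cite[\S 3.3]{AMS3} and \cite[\S 2]{SolLLC}, whose irreducible modules parametrize the component.

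The next step is to match these two indexing sets and the associated algebras. For the labels, Theorem \ref{thm:5.1} applied to each standard $F$-Levi subgroup $\mc L_{\mf f}$ provides a bijection between cuspidal unipotent representations of $L_{\mf f}$ attached to $(P_{\mf f} \cap L_{\mf f}, \sigma)$ and cuspidal unramified enhanced L-parameters for $\mc L_{\mf f}$, so that Bernstein components on the two sides are in canonical correspondence. For the algebras, \eqref{eq:6.1} yields the crucial isomorphism $\mc H(G, \hat P_{\mf f}, \hat \sigma) \cong \mc H(\mf s^\vee, \vec v)$. Composing the equivalence \eqref{eq:3.1}, this isomorphism, and the module-to-parameter bijection of \cite[Proposition 4.2]{SolLLC} produces, for every block, a bijection $\Irr(G)_{(\hat P_{\mf f}, \hat \sigma)} \leftrightarrow \Phi_e(G)^{\mf s^\vee}$, and assembling these via \eqref{eq:5} gives the theorem.

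The main obstacle was the Hecke algebra comparison in the ramified case, and it has already been cleared by \eqref{eq:6.1}: the chain $\mc H(G, \hat P_{\mf f}, \hat \sigma) \cong \mc H(G', \hat P_{\mf f'}, \hat \sigma') \cong \mc H(\mf{s'}^\vee, \vec v) \cong \mc H(\mf s^\vee, \vec v)$ combines Theorem \ref{thm:3.1} (ramified to companion group on the representation side), the $F_\nr$-split case of \cite[Lemma 4.5]{SolLLC}, and the case-by-case identification $G'^\vee \cong (G^\vee)^{\mb I_F}$ of Lemma \ref{lem:4.1} (companion group to ramified group on the Galois side). With \eqref{eq:6.1} in hand every argument of \cite[\S 4--5]{SolLLC} carries over verbatim to arbitrary connected reductive $F$-groups, with the $F_\nr$-split input of \cite{FOS1} replaced by Theorem \ref{thm:5.1}; the only routine residual check is that the compatibilities with $X_\Wr(G) \cong (Z(G^\vee)^{\mb I_F})_\Fr$-twists recorded in Lemma \ref{lem:3.7} and Theorem \ref{thm:4.6}(d) align the block labels on the two sides, which follows from the naturality of the constructions in Section \ref{sec:Lpar}.
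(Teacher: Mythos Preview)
Your proposal is correct and follows essentially the same approach as the paper: the paper likewise reduces Theorem \ref{thm:6.4} to \cite[Theorem 5.1]{SolLLC} by noting that \cite[\S 1--3]{SolLLC} was already written in full generality, that the $F_\nr$-split hypothesis in \cite[\S 4]{SolLLC} was only needed to invoke \cite{FOS1} (now replaced by Theorem \ref{thm:5.1}), and that the Hecke algebra comparison \cite[Lemma 4.5]{SolLLC} is supplied in the ramified case by the chain of isomorphisms \eqref{eq:6.1}. Your sketch unpacks the internal logic of \cite[Theorem 5.1]{SolLLC} (Bernstein decomposition on both sides, matching labels and algebras) a bit more explicitly than the paper, which simply records that \cite[\S 4]{SolLLC} now works and then cites the result.
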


In \cite[\S 5]{SolLLC} several properties of Theorem \ref{thm:6.4} were checked. These arguments 
generalize readily to possibly ramified connected reductive $F$-groups, if we take the following 
into account for the cases with $\mc G$ simple:
\begin{itemize}
\item For the $X_\Wr (G)$-equivariance from \cite[Lemma 5.3]{SolLLC} we use Lemma \ref{lem:3.7}
and Proposition \ref{prop:4.4}.
\item For the cuspidality and the compatibility with cuspidal supports from \cite[Lemmas 5.4
and 5.5]{SolLLC} we use Lemmas \ref{lem:3.4} and \ref{lem:4.7}.
\item For temperedness and boundedness in \cite[Lemma 5.6]{SolLLC} we use Lemma \ref{lem:3.3}.a
and Proposition \ref{prop:4.4}.
\item For square-integrability and discreteness in \cite[Lemma 5.7]{SolLLC} we use Lemma 
\ref{lem:3.3}.b and Proposition \ref{prop:4.4}.
\item For the considerations with parabolic induction in \cite[Lemmas 5.9 and 5.10]{SolLLC}
we use Lemma \ref{lem:3.4} and \eqref{eq:3.8}.
\end{itemize}
The central characters associated to both sides of Theorem \ref{thm:6.4}, as discussed in 
\cite[Lemma 5.8]{SolLLC}, need more attention.
Recall from \cite[p. 20--23]{Lan} and \cite[\S 10.1]{Bor} that every $\phi \in \Phi (G)$
determines a character $\chi_\phi$ of $Z(G)$. For the construction, one first
embeds $\mc G$ in a connected reductive $F$-group $\overline{\mc G}$ with
$\mc G_\der = \overline{\mc G}_\der$, such that $Z(\overline{\mc G})$
is connected. Then one lifts $\phi$ to a L-parameter $\overline \phi$ for
$\overline G = \overline{\mc G}(F)$. The natural projection ${}^L \overline G \to
{}^L Z(\overline G)$ produces an L-parameter $\overline{\phi}_z$ for 
$Z(\overline G) = Z(\overline{\mc G})(F)$, and via the local Langlands correspondence
for tori $\overline{\phi}_z$ determines a character $\chi_{\overline \phi}$ of 
$Z (\overline G)$. Then $\chi_\phi$ is given by restricting $\chi_{\overline \phi}$
to $Z(G)$. Langlands \cite[p. 23]{Lan} checked that $\chi_\phi$ does not depend on
the choices made above.

\begin{lem}\label{lem:6.1}
In Theorem \ref{thm:6.4} the central character of $\pi$ equals $\chi_{\phi_\pi}$. 
\end{lem}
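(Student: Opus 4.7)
The plan is to reduce first to the supercuspidal case and then to the connected-centre case, where the statement becomes a question about the LLC for tori combined with the compatibility with weakly unramified twists already established.

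First I would reduce to $\pi \in \Irr(G)_{\cusp,\unip}$. By the compatibility of the LLC with cuspidal supports proved above (via Lemmas \ref{lem:3.7} and \ref{lem:4.7}, and the arguments of \cite{SolLLC} for the non-simple case), there exist a Levi subgroup $\mc M$ of $\mc G$ and $\sigma \in \Irr(M)_{\cusp,\unip}$ such that $\pi$ occurs in $I_P^G(\sigma)$ and $\phi_\pi$ factors as ${}^L M \hookrightarrow {}^L G$ applied to $\phi_\sigma$. Since $Z(G) \subset Z(M)$ and the modular character of $P$ is trivial on $Z(G)$, the central character of $I_P^G(\sigma)$ is $\omega_\sigma|_{Z(G)}$, so $\omega_\pi = \omega_\sigma|_{Z(G)}$. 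On the parameter side, Langlands' construction of $\chi_\phi$ via an embedding into a group with connected centre is manifestly compatible with Levi subgroups (choose an extension $\overline{\mc M} \subset \overline{\mc G}$ simultaneously), whence $\chi_{\phi_\pi} = \chi_{\phi_\sigma}|_{Z(G)}$. Thus it suffices to treat the supercuspidal case.

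For supercuspidal $\pi$, choose a $z$-extension $\overline{\mc G}$ of $\mc G$, so that $\overline{\mc G}_\der = \mc G_\der$ and $Z(\overline{\mc G})$ is a connected torus. By Theorem \ref{thm:6.2}, there exists $\overline{\pi} \in \Irr(\overline G)_{\cusp,\unip}$ whose pullback to $G$ contains $\pi$, and whose L-parameter $\phi_{\overline{\pi}}$ lifts $\phi_\pi$ along the dual surjection. From the definitions, $\omega_\pi = \omega_{\overline{\pi}}|_{Z(G)}$ and $\chi_{\phi_\pi} = \chi_{\phi_{\overline{\pi}}}|_{Z(G)}$, so the problem reduces to proving $\omega_{\overline{\pi}} = \chi_{\phi_{\overline{\pi}}}$ on $Z(\overline G)$.

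Because $\overline{\pi}$ is unipotent, $\omega_{\overline{\pi}}$ is trivial on the unique parahoric subgroup of the torus $Z(\overline G)$, so it lies in $X_\Wr(Z(\overline G))$. Dually, $\chi_{\phi_{\overline{\pi}}}$ is the image under the LLC for the torus $Z(\overline G)$ of the composition $\mb W_F \to {}^L \overline G \to {}^L Z(\overline G)$; since $\phi_{\overline{\pi}}$ is unramified, so is this composition, and $\chi_{\phi_{\overline{\pi}}}$ is again weakly unramified. Both sides therefore live in the finite abelian group $X_\Wr(Z(\overline G)) \cong \big( Z(\overline G)^\vee_{\mb I_F} \big)_\Fr$. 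Using that the LLC is equivariant for weakly unramified twists (Lemma \ref{lem:3.7} and its extension through \eqref{eq:5.16}--\eqref{eq:5.17}), the discrepancy $\omega_{\overline{\pi}} \chi_{\phi_{\overline{\pi}}}^{-1}$ is invariant under twisting $\overline{\pi}$ by any $\chi \in X_\Wr(\overline G)$. It therefore suffices to verify the equality on a set of representatives for the $X_\Wr(\overline G)$-orbits in $\Irr(\overline G)_{\cusp,\unip}$. On such representatives the construction of the LLC in Section \ref{sec:supercusp} reduces through \eqref{eq:5.16}--\eqref{eq:5.17} to the anisotropic-centre case and thence to the adjoint case; the split central torus $Z(\overline G)_s$ and its anisotropic complement interact with the construction precisely through the LLC for tori, on which $\omega = \chi$ is tautological. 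The main obstacle is bookkeeping: one must track how the Kottwitz homomorphism for $\overline G$ and the specific splitting used to define $\chi_\phi$ interact with the bijections \eqref{eq:5.16}--\eqref{eq:5.17}, but no essentially new input beyond what is already in \cite{SolLLC,FOS1} is required.
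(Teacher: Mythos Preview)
Your reduction to the supercuspidal case is the same as the paper's. After that the routes diverge: you pass \emph{upward} to a group $\overline{\mc G}$ with connected centre (and invoke Theorem~\ref{thm:6.2}), whereas the paper passes \emph{downward} to the adjoint group using the explicit constructions of Section~\ref{sec:supercusp}. Concretely, the paper first treats $\mc G$ with anisotropic centre: if $\pi \subset \sfq^*(\pi_\ad)$ then $\omega_\pi$ is visibly trivial, and since $\phi_\pi = \sfq^\vee(\phi_{\pi_\ad})$ lands in ${G^\vee}_\der \rtimes \mb W_F$, its projection to ${}^L Z(\overline G)$ is trivial, so $\chi_{\phi_\pi}=1$ as well. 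All other $\pi$ are reached by $X_\Wr(G)$-twists via Lemmas~\ref{lem:5.3}--\ref{lem:5.4}, and both $\omega$ and $\chi$ transform the same way. The isotropic-centre case then follows from \eqref{eq:5.16}--\eqref{eq:5.17}.

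Your detour through $\overline G$ is not wrong, but it does not buy a genuine simplification: your final ``bookkeeping'' step is precisely the paper's argument, now applied to $\overline G$ instead of $G$. A few smaller points: $X_\Wr(Z(\overline G))$ is not finite when $Z(\overline{\mc G})$ has a split part, though this does not damage the argument; and you rely on Theorem~\ref{thm:6.2}, which in the paper's logical order is established \emph{after} Lemma~\ref{lem:6.1} (the dependence is not circular, but it does require reordering). The paper's approach is more economical because it computes directly on the representatives where both sides are trivially equal, rather than first enlarging the group and then still having to make that same computation.
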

\begin{proof}
By construction $G^\vee$ is the quotient of $\overline{G}^\vee$ by a central subgroup. 
Then ${\overline{G}^\vee}_\der$ projects onto ${G^\vee}_\der$.

In the cuspidal support $(M,\phi_M,\rho_M) := \mb{Sc}(\phi_\pi,\phi_\pi)$, the difference
between $\phi_\pi$ and $\phi_M$ lies entirely in ${G^\vee}_\der$. Hence $\phi_\pi$ and
$\phi_M$ give the same map $\mb W_F \to (G^\vee / {G^\vee}_\der) \rtimes \mb W_F$.
Then their lifts $\overline{\phi_\pi}$ and $\overline{\phi_c}$ project to the same map
\[
\mb W_F \to (\overline{G}^\vee / {\overline{G}^\vee}_\der ) \rtimes \mb W_F = 
{}^L Z(\overline G) .
\]
Consequently $\phi_\pi$ and $\phi_c$ determine the same character of $Z(\overline G)$,
and $\chi_{\phi_\pi} = \chi_{\phi_c} |_{Z(G)}$.

Similarly, the central character of $\pi$ equals that of its supercuspidal support
(restricted to $Z(G)$). Together with \cite[Lemma 5.5]{SolLLC} (generalized above to
possibly ramified $F$-groups), this means that it suffices to consider the case where 
$\pi$ is supercuspidal and $(\phi_\pi,\rho_\pi)$ is cuspidal.

We specialize further to the case where $Z(\mc G)^\circ$ is $F$-anisotropic. 
When $\pi$ is contained in $\sfq^* (\pi_\ad)$ for some $\pi_\ad \in 
\Irr (G_\ad)_{\cusp,\unip}$, its central character is obviously trivial. By the 
construction in Section \ref{sec:supercusp}, $\phi_\pi = \sfq^\vee (\pi_\ad)$. Hence 
$\phi_\pi (\mb W_F) \subset {G^\vee}_\der \rtimes \mb W_F$ and $\overline{\phi_\pi}_z$
is the trivial parameter $\mr{id}_{\mb W_F}$ for $Z(\overline G)$. Then $\chi_{\phi_\pi} 
= \mr{triv}_{Z(G)}$, as required.

Other $\tilde \pi \in \Irr (G)_{\cusp,\unip}$ are obtained from such a $\pi$ by tensoring
with a suitable $\chi \in X_\Wr (G)$, see Lemma \ref{lem:5.3}. This is mimicked in 
Lemma \ref{lem:5.4}, and $\phi_{\tilde \pi} = \phi_{\chi \pi} = \chi \phi_\pi$. Then the 
central character of $\tilde \pi$ is $\chi$ and $\overline{\phi_{\tilde \pi}}_z = 
\overline{\chi}_z \overline{\phi_\pi}_z = \overline{\chi}_z$,
so $\chi_{\phi_{\tilde \pi}}$ is also $\chi$.

Finally we consider the case where $\pi$ is supercuspidal and $Z(\mc G)^\circ$ is 
$F$-isotropic. Since $\mc G / Z(\mc G)_s$ has $F$-anisotropic centre, we already know
the claim for $G / Z(G)_s$. But with \eqref{eq:5.16} and \eqref{eq:5.17} the LLC for 
$\Irr (G)_{\cusp,\unip}$ is deduced from its analogue for $G / Z(G)_s$ by twisting with
$X_\Wr (G)$ on both sides of the correspondence. Explicitly, every $\pi \in \Irr 
(G)_{\cusp,\unip}$ can be written as $\chi \otimes \tilde \pi$ with $\tilde \pi \in \Irr
(G / Z(G)_s)_{\cusp,\unip}$, and then $\phi_\pi = \chi \phi_{\tilde \pi}$. By the lemma for 
$G / Z(G)_s$, the central character of $\pi$ equals $\chi \otimes \chi_{\phi_{\tilde \pi}}$.
On the other hand 
\[
\overline{\phi_\pi}_z = \overline{\chi \phi_{\tilde \pi}}_z = \overline{\chi}_z 
\overline{\phi_{\tilde \pi}}_z ,
\]
so $\chi_{\phi_\pi} = \chi \otimes \chi_{\phi_{\tilde \pi}}$ as well.
\end{proof}

Summarising: we generalized the entire paper \cite{SolLLC} from $F_\nr$-split to arbitrary
connected reductive $F$-groups. In particular we may now use its main result 
\cite[Theorem 1]{SolLLC} in that generality.

Next we investigate the functoriality of Theorem \ref{thm:6.4}, as in \cite{SolFunct}.
The larger part of that paper (namely Sections 1--5) is written in complete generality,
for all connected reductive groups. Only \cite[\S 7]{SolFunct} deals exclusively with
unipotent representations. There it is assumed that the groups are $F_\nr$-split,
following \cite{FOS1,SolLLC}. 

Fortunately all the arguments from \cite[\S 7]{SolFunct} are also valid for ramified
groups. There are only two small points to note:
\begin{itemize}
\item In the proof of Lemma \cite[Lemma 7.1]{SolFunct} for possibly ramified connected
reductive $F$-groups, we must omit the reduction step from $G$ (with $F$-anisotropic 
centre) to $G_\der$. With our proof of Theorem \ref{thm:5.1} for reductive groups with
anisotropic centre, the arguments for \cite[Lemma 7.1]{SolFunct} apply directly.
\item In \cite[(7.21)]{SolFunct} it is claimed that 
\[
\sfq : \hat P_{\mf f} / P_{\mf f} \to \hat P_{\mf f,\ad} / P_{\mf f,\ad}
\]
is injective, which need not be true when $\mc G$ is ramified. To overcome that, we can
take $\hat \sigma \in \Irr (\hat P_{\mf f})_\cusp$ of the form
\[
\chi \otimes \sfq^* (\hat \sigma_\ad) \text{ with } \hat \sigma_\ad \in \Irr (P_{\mf f,\ad})_\cusp
\text{ and } \chi \in \Irr (\Omega_{G,\mf f}) \cong \Irr (\hat P_{\mf f} / P_{\mf f} ),
\]
as in Lemma \ref{lem:5.3}. For $g \in \hat P_{\mf f,\ad}$ and $p \in \hat P_{\mf f}$ we 
have Ad$(g)^* \chi = \chi$ because $\Omega_{G,\mf f}$ and $\Omega_{G_\ad,\mf f}$ are abelian.
The equation following \cite[(7.21)]{SolFunct} becomes
\begin{multline*}
\hspace{2cm} \mr{Ad}(g)^* (\hat \sigma)(p) = \chi (p) \big( \mr{Ad}(g)^* 
\sfq^* (\sigma_\ad) \big)(p) = \\
\chi (p) \hat{\sigma_\ad} (g) \hat{\sigma_\ad} (\sfq (p)) \hat{\sigma_\ad} (g^{-1}) =
\hat{\sigma_\ad} (g) \hat \sigma (p) \hat{\sigma_\ad} (g^{-1}) \qquad p \in \hat P_{\mf f} .
\end{multline*}
With that, the proof of \cite[Lemma 7.5.b]{SolFunct} works fine.
\end{itemize}

This means that the results of \cite{SolFunct} hold for unipotent representations of any
connected reductive $F$-group. To formulate this precisely, let $\eta : \tilde{\mc G} \to
\mc G$ be a homomorphism of connected reductive $F$-groups such that
\begin{itemize}
\item the kernel of d$\eta : \mr{Lie}(\tilde{\mc G}) \to \mr{Lie}(\mc G)$ is central,
\item the cokernel of $\eta$ is a commutative $F$-group.
\end{itemize}
Let ${}^L = \eta^\vee \rtimes \mr{id} : G^\vee \rtimes \mb W_F \to \tilde G^\vee \rtimes \mb W_F$
be a L-homomorphism dual to $\eta$. For $\eta \in \Phi (G)$ we get ${}^L \eta \circ \phi 
\in \Phi (\tilde G)$. Then $\eta$ gives rise to an injective algebra homomorphism
\begin{equation}\label{eq:6.3}
{}^S \eta : \C [\mc S_\phi] \to \C [\mc S_{{}^L \eta \circ \phi}] , 
\end{equation}
which under mild assumptions is canonical. It is a twist of the injection 
${}^L \eta : \mc S_\phi \to \mc S_{{}^L \eta \circ \phi}$
by a character of $\mc S_\phi$, see \cite[Proposition 5.4]{SolFunct}.

Decomposing $\eta$ as in \cite[(5.2)]{SolFunct}, we see that the pullback $\eta^*$ sends 
unipotent $G$-representations to unipotent $\tilde G$-representations and that ${}^L \eta$
maps $\Phi_\nr (G)$ to $\Phi_\nr (\tilde G)$. Then \cite[Conjecture 2 and Theorem 3]{SolFunct},
applied with the LLC from Theorem \ref{thm:6.4} say: 

\begin{thm}\label{thm:6.2}
For any $(\phi,\rho) \in \Phi_{\nr,e}(G)$: 
\[
\eta^* (\pi (\phi,\rho)) = \bigoplus\nolimits_{\tilde \rho \in \Irr (\mc S_{{}^L \eta \circ \phi})} 
\mr{Hom}_{\mc S_\phi} \big( \rho, {}^S \eta^* (\tilde \rho) \big) 
\otimes \pi ({}^L \eta \circ \phi, \tilde \rho) .
\] 
\end{thm}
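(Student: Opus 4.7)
The plan is to deduce Theorem \ref{thm:6.2} as a direct application of \cite[Theorem 3]{SolFunct}, feeding in the LLC of Theorem \ref{thm:6.4} as the parameter assignment. Since the general functoriality framework of \cite[\S 1--5]{SolFunct} (the construction of the injection ${}^S\eta$ of \eqref{eq:6.3} and its identification with ${}^L\eta$ twisted by a character of $\mc S_\phi$ via \cite[Proposition 5.4]{SolFunct}) is already formulated for arbitrary connected reductive $F$-groups, the only work is to check that the unipotent-specific arguments of \cite[\S 7]{SolFunct} go through for possibly ramified $\mc G$ and $\tilde{\mc G}$. Once that is done, the multiplicity formula in the statement falls out directly.

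First I would verify that \cite[Lemma 7.1]{SolFunct} survives. The proof in \cite{SolFunct} first reduces to the case of anisotropic centre and then restricts further from $(\mc G_\der Z(\mc G)_a)(F)$ to $G_\der$. The second step is unavailable here: for ramified $\mc G$ the restriction map $\Irr(G)_{\cusp,\unip} \to \Rep(G_\der)$ can fail to be injective and miss representations on the target. However, our proof of Theorem \ref{thm:5.1} for reductive $F$-groups with $F$-anisotropic centre (Section \ref{sec:supercusp}, built on Lemmas \ref{lem:5.3} and \ref{lem:5.4}) already handles $\Irr_{\cusp,\unip}(G)$ directly in terms of $\Irr_{\cusp,\unip}(G_\ad)$ twisted by $X_\Wr(G)$, so the reduction to $G_\der$ is simply not needed and the rest of the argument for \cite[Lemma 7.1]{SolFunct} applies verbatim.

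Next I would adapt \cite[(7.21)]{SolFunct}, where the injectivity of $\sfq : \hat P_{\mf f} / P_{\mf f} \to \hat P_{\mf f,\ad} / P_{\mf f,\ad}$ is used; its kernel is precisely $\ker(\Omega_G \to \Omega_{G_\ad})$, which can be nontrivial in the ramified case. The fix is to choose the cuspidal type $\hat\sigma$ in the form $\chi \otimes \sfq^*(\hat\sigma_\ad)$ with $\hat\sigma_\ad \in \Irr_{\cusp,\unip}(\hat P_{\mf f,\ad})$ and $\chi \in \Irr(\Omega_{G,\mf f})$, as permitted by Lemma \ref{lem:5.3}.b. Since $\Omega_{G,\mf f}$ and $\Omega_{G_\ad,\mf f}$ are abelian, conjugation by $g \in \hat P_{\mf f,\ad}$ fixes $\chi$, so the conjugation identity for $\hat\sigma$ becomes $\mr{Ad}(g)^*\hat\sigma(p) = \hat\sigma_\ad(g) \hat\sigma(p) \hat\sigma_\ad(g)^{-1}$ for all $p \in \hat P_{\mf f}$. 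This is exactly the identity needed for the proof of \cite[Lemma 7.5.b]{SolFunct} to proceed, and from there the remaining steps of \cite[\S 7]{SolFunct} are unaffected by ramification.

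With these two modifications, \cite[Theorem 3]{SolFunct} applies to the LLC of Theorem \ref{thm:6.4} and yields the decomposition stated in Theorem \ref{thm:6.2}. The main obstacle is the bookkeeping around point (b): one has to make sure that the chosen extension of $\hat\sigma$ to $N_G(P_{\mf f})$ is compatible with the chosen extension of $\hat\sigma_\ad$ in such a way that the character ${}^S\eta$ coming from \cite[Proposition 5.4]{SolFunct} matches the combinatorics of the cover $\mc S_\phi \hookrightarrow \mc S_{{}^L\eta\circ\phi}$. This matching is a routine, if delicate, check using Lemmas \ref{lem:5.3}--\ref{lem:5.4} and the description of ${}^S\eta$ as a twist of ${}^L\eta$.
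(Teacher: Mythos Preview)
Your proposal is correct and matches the paper's approach essentially verbatim: the paper also deduces the statement from \cite[Theorem 3]{SolFunct} applied with the LLC of Theorem \ref{thm:6.4}, after noting precisely the same two modifications to \cite[\S 7]{SolFunct}---omitting the reduction to $G_\der$ in \cite[Lemma 7.1]{SolFunct} (using instead the direct proof of Theorem \ref{thm:5.1} for anisotropic centre) and replacing the injectivity in \cite[(7.21)]{SolFunct} by the choice $\hat\sigma = \chi \otimes \sfq^*(\hat\sigma_\ad)$ with the same conjugation computation you give. Your final paragraph about bookkeeping for ${}^S\eta$ is more cautious than the paper, which simply asserts that with these two fixes the remainder of \cite[\S 7]{SolFunct} goes through unchanged.
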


Finally we come to Conjecture \ref{conj:HII} by Hiraga, Ichino and Ikeda \cite{HII}. To prove it
we will generalize the arguments from \cite{FOS2}, which was designed for $F_\nr$-split groups.

\begin{prop}\label{prop:6.3}
\enuma{
\item The LLC from Theorem \ref{thm:6.4} satisfies the HII conjecture \ref{conj:HII}, 
up to some rational constants that depend only on an orbit $\mc O$.
\item Part (a), Lemma \ref{lem:6.1}, Theorem \ref{thm:6.2} and compatibility with direct products 
of reductive groups determine this LLC uniquely, up to twists by $X_\Wr (G_\ad,G)$.
}
\end{prop}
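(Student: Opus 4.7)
The plan is to mimic the two-step strategy that organised Sections \ref{sec:supercusp} and \ref{sec:LLC}: first reduce the HII identity to ramified simple adjoint groups, and then transfer from the $F_\nr$-split companion group $\mc G'$, where the statement is the main result of \cite{FOS2}. For part (a), I would begin by observing that for supercuspidal $\pi$ the equality in Conjecture \ref{conj:HII} is already Theorem \ref{thm:5.1}(e), so only the non-supercuspidal tempered case requires work. Writing a general tempered $\pi$ as a direct summand of a normalised parabolically induced representation $I_P^G(\pi_M)$ with $\pi_M$ square-integrable modulo centre, the Plancherel formula of \cite{Wal} expresses the Plancherel density at $\pi$ as a product of $\mr{fdeg}(\pi_M)$, the Plancherel measure on the inertial orbit $\mc O = X_{\mr{unr}}(M)\pi_M$, and a combinatorial factor from the $R$-group. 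The compatibility of our LLC with parabolic induction (Lemma \ref{lem:3.4} at the simple level, extended to reductive groups via the arguments of \cite{SolLLC}) produces the matching factorisation of $|\gamma(0, \mr{Ad}_{G^\vee,M^\vee} \circ \phi_\pi,\psi)|$ on the Galois side, so that the HII identity for $\pi$ reduces to the one for $\pi_M$ plus a routine identity between Hecke-algebra Plancherel measures on $\mc O$ and their Galois-side counterparts.

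The key computational inputs are then: (i) combine Corollary \ref{cor:3.2}, Lemma \ref{lem:3.5} and Lemma \ref{lem:4.5} so that, under the transfer $G \leftrightarrow G'$, both the Plancherel densities and the relevant adjoint $\gamma$-factors are multiplied by the same factor $q_F^{\pm \mb a(\mf g^\vee)/2}$; (ii) invoke the HII equality for $G'$ from \cite[Theorem 5.3]{FOS2}, which holds for all tempered unipotent $G'$-representations since $G'$ is $F_\nr$-split; (iii) pass from simple adjoint groups to arbitrary connected reductive $F$-groups by compatibility with almost direct products and restriction of scalars, both of which are available from Section \ref{sec:supercusp} and from Theorem \ref{thm:6.2} respectively. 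Because Conjecture \ref{conj:HII} is formulated only up to a rational constant $c_M$ depending on $\mc O$, I will not attempt to identify $c_M = 1$ at this stage; the exact normalisation of Haar measures in \eqref{eq:2.11} together with the computation leading to Lemma \ref{lem:3.5} shows, however, that $c_M$ is the same for $G$ and its companion $G'$.

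For part (b), the plan is to use the HII equality and Lemma \ref{lem:6.1} as quantitative constraints, and Theorem \ref{thm:6.2} and compatibility with direct products as structural ones. Compatibility with direct products reduces the uniqueness problem to $F$-simple groups. For such $\mc G$, Theorem \ref{thm:6.2} applied to the central isogeny $\sfq : \mc G \to \mc G_\ad$ transfers any candidate LLC satisfying the listed properties to one for $\mc G_\ad$; the HII equality then fixes formal degrees for $\Irr_\unip(G_\ad)$, and in conjunction with compatibility with cuspidal supports and with induction these determine $\phi_{\pi_\ad}$ up to at most the action of $X_\Wr(G_\ad)$ on the cuspidal supports. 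Pulling back via Theorem \ref{thm:6.2} and using Lemma \ref{lem:6.1} to constrain the restriction of $\phi_\pi$ to $Z(G)$ then determines $\phi_\pi$ up to exactly the ambiguity coming from $\ker(X_\Wr(G_\ad) \to X_\Wr(G))$, whose dual on the $G$-side is precisely the group $X_\Wr(G_\ad,G)$ of admissible weakly unramified twists.

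The main obstacle I anticipate is step (i): keeping track of the precise normalisation of Haar and Plancherel measures in the presence of ramification, since the ramified case introduces the Artin-conductor factor $q_F^{-\mb a(\mf g^\vee)/2}$ absent from \cite{FOS2}. This is however designed to be handled uniformly at the level of the Hecke-algebra isomorphism of Theorem \ref{thm:3.1}, whose effect on traces and thus on Plancherel densities is precisely the content of Lemma \ref{lem:3.5}; combined with the parallel identity \eqref{eq:4.15} on the Galois side, the ramified correction factors cancel against one another and reduce the ramified instance of HII to the $F_\nr$-split one proved in \cite{FOS2}.
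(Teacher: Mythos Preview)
Your overall plan --- reduce to the companion group $\mc G'$ via the Hecke-algebra isomorphism and the $\gamma$-factor comparison, then invoke \cite{FOS2} --- is the same mechanism that underlies Theorem \ref{thm:4.6}(e) for simple groups, and it is correct at that level. The paper, however, organises the proof of Proposition \ref{prop:6.3} differently. Rather than redoing Plancherel computations, it (i) observes that Opdam's ``Langlands parametrization'' from \cite{Opd18} already satisfies Conjecture \ref{conj:HII} up to rational constants, (ii) argues that this parametrization extends to ramified groups because the Hecke algebras match by Theorem \ref{thm:3.1} --- so Opdam's spectral-transfer-morphism arguments \cite{Opd2,Opd18} carry over verbatim --- and (iii) checks, by extending the proof of \cite[Theorem 2.1]{FOS2} with Lemmas \ref{lem:5.3}--\ref{lem:5.4}, that the LLC of Theorem \ref{thm:6.4} agrees with Opdam's parametrization. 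Part (b) is then simply the last portion of that same proof in \cite{FOS2}.

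The substantive gap in your sketch is step (iii) of part (a): ``pass from simple adjoint groups to arbitrary connected reductive $F$-groups by compatibility with almost direct products and restriction of scalars.'' Compatibility with almost direct products and Theorem \ref{thm:6.2} control how \emph{representations} and \emph{L-parameters} behave under isogenies, but they do not by themselves relate Plancherel densities for $G$ to those for $G_\ad$; that comparison is precisely what the spectral-transfer-morphism machinery of \cite{Opd2,Opd18} supplies, and it is not a consequence of the functoriality statements you cite. Your reduction of tempered $\pi$ to square-integrable $\pi_M$ on a Levi is fine, but then you need the formal-degree formula for square-integrable (not merely supercuspidal) $\pi_M$ on a general reductive Levi, and that is exactly the step where the paper defers to \cite{Opd18}. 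Without invoking something equivalent to those spectral transfer morphisms, your argument covers the simple case (reproving Theorem \ref{thm:4.6}(e)) but does not close the loop for arbitrary connected reductive $\mc G$.
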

\begin{proof}
Part (a) is shown in \cite[Theorem 4.5.1]{Opd18}, for the "Langlands parametrization" from
that paper. We proved it for ramified simple groups in Theorem \ref{thm:4.6}.e, which in
combination with \cite[\S 4.5]{Opd18} gives part (a) for all adjoint $F$-groups. The proof
in the case of $F_\nr$-split groups with anisotropic centre in \cite{Opd18} proceeds via
reduction to adjoint groups. It relies on spectral transfer morphisms for affine Hecke algebras
\cite{Opd2}.
We showed in Theorem \ref{thm:3.1} that the Hecke algebras for ramified adjoint $F$-groups
have exactly the same shape and the same parameters as those for suitable $F_\nr$-split
adjoint $F$-groups, so that Opdam's arguments with spectral transfer morphisms remain valid.
This means that the Langlands parametrization from \cite{Opd18} can be constructed for all
connected reductive $F$-groups, and that it satisfies Conjecture \ref{conj:HII} up to constants.

Our LLC from Theorem \ref{thm:6.4} extends \cite{SolLLC} to possibly ramified groups. 
In \cite[Theorem 2.1]{FOS2} it is checked that the LLC from \cite{SolLLC} agrees with the 
Langlands parametrization from \cite{Opd18} (in the sense that the latter can be obtained
from the former by forgetting the enhancements of L-parameters). We need to extend this
compatibility to Theorem \ref{thm:6.4} and the above generalization of Opdam's Langlands
parametrization. 

By Lemmas \ref{lem:5.3} and \ref{lem:5.4}, it suffices to do so for 
L-parameters $\phi \in \sfq^\vee (\Phi_\nr (G_\ad))$ and for unipotent $G$-representations
with trivial central character. For those we saw in the proof of Lemma \ref{lem:5.4} that
$\phi (\Fr) \in G^{\vee,\mb I_F,\circ}$. By  \cite[Lemma 6.4]{Bor} that element corresponds
to a unique $W(G^{\vee,\mb I_F,\circ}, T^{\vee,\mb I_F,\circ})^\Fr$-orbit in 
$(T^{\vee,\mb I_F,\circ})_\Fr$. With this modification in mind, the proof of 
\cite[Theorem 2.1]{FOS2} works for such L-parameters and $G$-representations. The first
part of that proof establishes part (a) of the current proposition, while the last part deals
with the essential uniqueness asserted in part (b).
\end{proof}

With Proposition \ref{prop:6.3}, everything in \cite[Sections 1--4]{FOS2} works equally
well for ramified $F$-groups. Recall that we proved the HII conjecture for 
square-integrable representations of ramified simple $F$-groups in Theorem \ref{thm:4.6}.e.
Together with \cite[\S 5.1]{FOS2} that establishes Conjecture \ref{conj:HII} for 
square-integrable representations of adjoint $F$-groups. 

\begin{lem}\label{lem:6.5}
Let $\mc G$ be semisimple and let $\delta \in \Irr_\unip (G)$ be square-integrable. 
Then there exists a
$\chi \in X_\Wr (G)$ and a square-integrable $\delta_\ad \in \Irr_\unip (G_\ad)$ such
that $\chi \otimes \delta$ is a constituent of the pullback $\eta^* (\delta_\ad)$.
\end{lem}
\begin{proof}
Let $\mc M$ be a $F$-Levi subgroup of $\mc G$ and let $\mc M_{AD} = \mc M / Z(\mc G)$ 
be the image of $\mc M$ in $\mc G_\ad$. In Lemma \ref{lem:5.3} we showed that for 
every $\pi_M \in \Irr_{\cusp,\unip}(M)$ there exist
$\chi_M \in X_\Wr (M)$ and $\pi_{M_{AD}} \in \Irr_{\cusp,\unip}(M_{AD})$ such that 
$\chi_M \otimes \pi_M$ is a constituent of the pullback $\eta_M^* (\pi_{M_{AD}})$.
We recall from \cite[(37)]{FOS2} that parabolic induction is compatible with pullback 
from $G_\ad$ (resp. $M_{AD}$). This implies that every $\pi \in \Irr_\unip (G)$ is,
up to twisting by a $\chi \in X_\Wr (G)$, contained in the pullback
of a $\pi_\ad \in \Irr (G_\ad)_\unip$. 

Since $G$ is semisimple, $\chi$ is automatically unitary and tensoring by $\chi$
preserves square-integrability. We can regard $\pi \otimes \chi$ as a representation
of the cocompact subgroup $G/Z(G)$ of $G_\ad$. Then a small variation on 
\cite[Proposition 2.7]{Tad} says that $\pi \otimes \chi$ is square-integrable 
if and only if $\pi_\ad$ is square-integrable.
\end{proof}

We note that the tensoring with the unitary character $\chi$ in Lemma \ref{lem:6.5} 
does not change the Plancherel densities. Therefore $\chi$ may be ignored in the subsequent
computations of formal degrees.
With Lemma \ref{lem:6.5} at hand, the proofs in \cite[\S 5.2]{FOS2} apply to all connected
reductive $F$-groups with anisotropic centre, if we make the following modifications:
\begin{itemize}
\item By \cite[(34)]{FOS2} we must multiply the right hand side of \cite[(47)]{FOS2} with 
vol$(Z(G)^\circ_1)^{-1}$. (This factor is invisible in the semisimple setting of 
\cite[\S 5.2]{FOS2}.) For consistency, we must multiply \cite[(48)]{FOS2} and the right 
hand sides of \cite[Theorem 5.4.a and (51)]{FOS2} with the same factor. 
\item Using \eqref{eq:5.13} we replace the equality   
\[
\gamma (s,\mr{Ad}_{G^\vee} \circ \phi_\delta, \psi) =
\gamma (s,\mr{Ad}_{{G^\vee}_\Sc} \circ \phi_{\delta_\ad}, \psi)
\]
in the proof of \cite[Theorem 5.4.b]{FOS2} by
\[
\left| \frac{\gamma (0,\mr{Ad}_{G^\vee} \circ \phi_\delta, \psi)}{\gamma 
(0,\mr{Ad}_{{G^\vee}_\Sc} \circ \phi_{\delta_\ad}, \psi)} \right| =
| \gamma (0,\mr{Ad}_{Z(G^\vee)^\circ} \circ \mr{id}_{\mb W_F}, \psi)| = 
\mr{vol}(Z(G)^\circ_1)^{-1} .
\]
Then the next line of that proof becomes
\[
\frac{\mr{fdeg} (\delta)}{\mr{fdeg} (\delta_\ad)} = \frac{\dim (\rho_\delta) \, 
\big| S_{\phi_{\delta_\ad}}^\sharp \big| \,|\gamma (0,\mr{Ad}_{G^\vee} \circ \phi_\delta, \psi)|}{
\dim (\rho_{\delta_\ad}) \, \big| S_{\phi_\delta}^\sharp \big| \,|\gamma 
(0,\mr{Ad}_{{G^\vee}_\Sc} \circ \phi_{\delta_\ad}, \psi)|}  
\]
Combining that with the adjoint case yields the HII 
conjecture for formal degrees of square-integrable representations of connected reductive 
$F$-groups with anisotropic centre.
\end{itemize}
That renders most of \cite[\S 5.3]{FOS2} superfluous, except for the last part of the 
proof of \cite[Theorem 5.6]{FOS2}. That achieves the generalization (from $F$-anisotropic 
centre) to square-integrable modulo centre representations of arbitrary connected 
reductive $F$-groups.

We move on to Plancherel densities for tempered unipotent representation of possibly 
ramified $F$-groups. Some statements in \cite[\S 6.2]{FOS2} need to be modified:
\begin{itemize}
\item The adjoint $\gamma$-factors no longer need to be real-valued, as in 
\cite[Lemma A.5]{FOS2}, because of $\epsilon$-factors of ramified $\mb W_F$-subrepresentations
of Lie$(G^\vee)$. To compensate for that, one can include fourth roots of unity as in
Lemma \ref{lem:4.5}, or one can replace $\pm \gamma (0,\mr{Ad} \circ \phi,\psi)$
everywhere by $|\gamma (0,\mr{Ad} \circ \phi,\psi)|$. 
\item In view of \eqref{eq:2.9}, \cite[(67)]{FOS2} becomes
\[
\frac{\tau (N_e)}{\tau_{\mc H^M} (N_e)} = \frac{\mr{vol}(\hat P_{\mf f,M})}{\mr{vol}
(\hat P_{\mf f})} = \frac{|\overline{\mc M_{\mf f}}(k_F)| \, q_F^{(\dim \overline{\mc G_{\mf f}^\circ} 
+ \dim \mc G + \mb a (\mr{Lie} \, G^\vee ))/2}}{|\overline{\mc G_{\mf f}}(k_F)| \, 
q_F^{(\dim \overline{\mc M_{\mf f}^\circ} + \dim \mc M + \mb a (\mr{Lie} \, M^\vee ))/2}} . 
\]
This entails that in \cite[Lemma 6.4]{FOS2} one also gets an extra factor\\
$q_F^{(\mb a (\mr{Lie} \, G^\vee) - \mb a (\mr{Lie} \, M^\vee )) / 2}$.
\item The computation of adjoint $\gamma$-factors in \cite[Appendix A.2]{FOS2} applies only
to the $\mb I_F$-fixed points in the involved complex Lie algebras. With Lemma \ref{lem:4.3}
we can obtain similar formulas based on Lie$(G^\vee)$ and Lie$(M^\vee)$. It follows that
\cite[(69)]{FOS2} must be replaced by
\[
\gamma (0, \mr{Ad}_{G^\vee,M^\vee} \circ t \phi_M, \psi) = \epsilon \gamma (0,\mr{Ad}_{M^\vee} 
\circ t \phi_M,\psi) m^{M^*} (t r_M) \frac{q_F^{(\dim \mc G + \mb a (\mr{Lie} \, G^\vee ))/2}}{
q_F^{(\dim \mc M + \mb a (\mr{Lie} \, M^\vee ))/2}} ,
\]
where $\epsilon^2 \in \{\pm 1\}$ depends only on the $\mb W_F$-representation
$\mr{Lie}(G^\vee) / \mr{Lie} (M^\vee)$.
\end{itemize}
With these adjustments \cite[\S 6]{FOS2} becomes valid for all connected reductive $F$-groups.
In particular \cite[Theorem 6.5]{FOS2} then establishes the HII conjecture for all tempered
irreducible unipotent $G$-representations.

\section{Rigid inner twists}
\label{sec:rigid}

So far we adhered to the conventions of Arthur \cite{Art,ABPS} for the setup with inner forms, 
components groups of L-parameters and relevance of enhancements. In this paragraph we take
a different point of view, that of rigid inner twists. This notion was developed for reductive
groups over local fields of characteristic zero by Kaletha \cite{Kal1}. Recently Dillery
extended it to reductive groups over local function fields \cite{Dil}.

The main point is to replace $H^1 ( \mb W_F, \mc G_\ad)$, which parametrizes inner twists of
$\mc G$, by a new cohomology set $H^1 (\mc E, \mc Z \to \mc G)$. Here $\mc Z$ is a fixed finite
central $F$-subgroup of $\mc G$. When char$(F) = 0$, this is based on a canonical extension
of topological groups
\[
1 \to u \to W \to \mr{Gal}(F_s / F) \to 1 .
\]
Let $Z^1(\mc E,\mc Z \to \mc G)$ be the set of those continuous cocycles of $W$ in $\mc G (F_s)$,
whose restriction to $u$ is a homomorphism $u \to \mc Z (F_s)$. In \cite[\S 3.2]{Kal1}, 
$H^1 (\mc E, \mc Z \to \mc G)$ is defined as a subset of $H^1 (W,\mc G)$, namely the image
of $Z^1 (\mc E,\mc Z \to \mc G)$. The construction of $H^1 (\mc E,\mc Z \to \mc G)$ is similar, 
but considerably more involved, when char$(F) > 0$ \cite[\S 3.2]{Dil}. 

For our purposes it is best to take $\mc Z = Z (\mc G_\der)$, as suggested in \cite{Kal1}.
We write
\[
\overline{\mc G} = \mc G / Z(\mc G_\der) = \mc G_\der / Z(\mc G_\der) \times
Z (\mc G ) / Z(\mc G_\der) = \mc G_\ad \times Z(\overline{\mc G}) .
\]
The canonical homomorphisms $\mc G \to \overline{\mc G} \to \mc G_\ad$ induce natural maps
\begin{equation}\label{eq:7.1}
H^1 (\mc E, Z(\mc G_\der) \to \mc G) \to H^1 (\mb W_F, \overline{\mc G}) \to H^1 (\mb W_F, \mc G_\ad) 
\end{equation}
which are surjective \cite[Proposition 5.12]{Dil}. 

Recall that an inner twist $(\mc G',\psi)$ of $\mc G$ is determined by an isomorphism of
$F_s$-groups $\psi : \mc G \to \mc G'$, such that for every $\gamma \in \mr{Gal}(F_s /F)$ the 
$F$-automorphism $\psi^{-1} \circ \gamma \circ \psi \circ \gamma^{-1}$ of $\mc G$ is inner. 
Via \eqref{eq:7.1} every $z \in H^1 (\mc E, Z(\mc G_\der) \to \mc G)$ determines an element of
$H^1 (\mb W_F, \mc G_\ad)$ and hence a unique equivalence class of inner twists $\mc G$.
By the surjectivity of \eqref{eq:7.1} every inner twist of $\mc G$ (up to equivalence) can be 
obtained in this way. 

By definition \cite[\S 5.1]{Kal1} a rigid inner twist of $\mc G$ is a triple $(\mc G',\psi,z)$
where $(\mc G',\psi)$ is an inner twist of $\mc G$ and $z \in Z^1 (\mc E,Z(\mc G_\der) \to \mc G)$
such that
\[
\psi^{-1} \circ \gamma \circ \psi \circ \gamma^{-1} = \mr{Ad}(z(\gamma)) 
\text{ for all } \gamma \in \mr{Gal}(F_s / F) .
\]
This applies to local fields of characteristic zero, and it probably it works well for most 
connected reductive groups over local functions field also. Nevertheless, the actual definition 
of rigid inner twistsis more complicated when $F$ has positive characteristic \cite[\S 7.1]{Dil}. 
In any case, $H^1 (\mc E, Z(\mc G_\der) \to \mc G)$ can be regarded as the set of equivalence 
classes of rigid inner twists of $\mc G$. The big advantage of this setup is that it allows 
canonical transfer factors \cite[\S 6]{Dil}.

Let $\overline{\mc G}^\vee = {\mc G^\vee}_\Sc \times Z (\mc G / Z(\mc G_\der) )^\vee$ be the dual 
group of $\overline{\mc G}$ and write
\begin{equation}\label{eq:7.5}
\overline{G}^\vee = \overline{\mc G}^\vee (\C) = {G^\vee}_\Sc \times Z (\overline{G}^\vee )^\circ ,
\end{equation}
a cover of $G^\vee$. The $\mb W_F$-stable pinning of $G^\vee$ can be lifted to a pinning of 
$\overline{G}^\vee$ and we use the latter to define an action of $\mb W_F$ on $\overline{G}^\vee$. 
That furnishes a surjection ${}^L \overline{G} \to {}^L G$. Let $Z (\overline{G}^\vee )^+$ 
be the preimage of $Z(G^\vee )^{\mb W_F}$ in $Z( \overline{G}^\vee)$.
In \cite[Corollary 7.11]{Dil} a natural isomorphism
\begin{equation}\label{eq:7.2}
H^1 (\mc E, Z(\mc G_\der) \to \mc G) \cong \Irr \big( \pi_0 \big( Z(\overline{G}^\vee)^+ \big) \big) 
\end{equation}
was established. Via \eqref{eq:7.1} it extends the Kottwitz isomorphism
\begin{equation}\label{eq:7.3}
H^1 (\mb W_F, \mc G_\ad) \cong \Irr \big( Z ({G^\vee}_\Sc)^{\mb W_F} \big)
\end{equation}
from \cite{Tha}. This shows that the fibers of \eqref{eq:7.1} carry simply transitive actions of
\begin{equation}\label{eq:7.8}
\Irr \big( \pi_0 \big( Z(\overline{G}^\vee )^+ \big) / Z( {G^\vee}_\Sc )^{\mb W_F} \big) .
\end{equation}
In particular the number of rigid inner twists lying over a given inner twist (both considered up
to equivalence) is finite and equals 
$\big[ \pi_0 \big( Z(\overline{G}^\vee )^+ \big) : Z( {G^\vee}_\Sc )^{\mb W_F} \big]$. 
From \eqref{eq:7.5} we see that the group \eqref{eq:7.8} is trivial when $\mc G$
is an inner form of a split group. However, when $\mc G$ is an outer form of a split group (e.g.
a non-split torus), \eqref{eq:7.8} can very well be nontrivial. In such cases the quasi-split
rigid inner twist of a reductive group is not unique anymore -- 
apparently a price one has to pay for canonical transfer factors. 

For issues involving parabolic induction we need to understand how \eqref{eq:7.2} is related to
its versions for Levi subgroups. To this end we assume that $G$ is quasi-split. Let $(\mc G^z,\psi,z)$
be a rigid inner twist of $\mc G$ and let $\mc L^z$ be a Levi $F$-subgroup of $\mc G^z$. Any 1-cocycle
used to construct $\mc L^z$ as inner twist of a Levi $F$-subgroup $\mc L$ of $\mc G$ can also be
used for $\mc G^z$. Therefore we may assume (upon replacing $z$ by an equivalent element) that 
$z \in Z^1 (\mc E,Z(\mc G_\der) \to \mc L)$ and that $\psi$ restricts to an $F_s$-isomorphism 
$\mc L \to \mc L^z$. 

Consider the Levi subgroup $\tilde{\mc L} = \mc L / Z (\mc G_\der)$ of $\overline{\mc G}$ and the
inclusion
\begin{equation}\label{eq:7.11}
\mc L / Z(\mc G_\der) = \tilde{\mc L} \to \overline{\mc G} .
\end{equation}
We note that in general there no canonical homomorphism between $\tilde{\mc L}$ and $\overline{\mc L}$.

\begin{lem}\label{lem:7.4}
\enuma{
\item The map \eqref{eq:7.11} induces a surjection 
$\pi_0 \big( Z(\overline{G}^\vee)^+ \big) \to \pi_0 \big( Z(\tilde{L}^\vee)^+ \big)$.
\item Let $\chi_z \in \Irr ( Z(\overline{G}^\vee)^+ )$ be associated to $z \in H^1 (\mc E,Z(\mc G_\der)
\to \mc G)$ via \eqref{eq:7.2} and let $\chi_{z,\mc L}$ be the character of $Z(\tilde{L}^\vee )^+$ 
associated to $z \in H^1 (\mc E,Z(\mc G_\der) \to \mc L)$ via \eqref{eq:7.2}. Then $\chi_z$ is the
pullback of $\chi_{z,\mc L}$ along the map from part (a). 
}
\end{lem}
\begin{proof}
(a) Since $\tilde{\mc L}$ is a Levi subgroup of $\overline{\mc G} = \mc G_\ad \times Z(\overline{\mc G})$,
we can express it as $\mc L_{AD} \times Z(\overline{\mc G})$, where $\mc L_{AD}$ is a Levi subgroup of
$\mc G_\ad$. We see that $\tilde{L}^\vee = L^\vee_c \times Z(\overline{G}^\vee )^\circ$, where $L^\vee_c$
is the preimage of $L^\vee$ in ${G^\vee}_\Sc$. By \cite[Lemma 1.1]{Art1}
\[
Z(G^\vee )^{\mb W_F} Z(L^\vee )^{\mb W_F,\circ} = Z(L^\vee )^{\mb W_F}.
\]
Taking preimages in $\overline{G}^\vee$, we find 
\[
Z(\overline{G}^\vee )^+ Z(\tilde{L}^\vee )^{\mb W_F,\circ} = Z(\tilde{L}^\vee )^+.
\]
Now it is clear that $\pi_0 \big( Z(\overline{G}^\vee)^+ \big)$ surjects onto
$\pi_0 \big( Z(\tilde{L}^\vee)^+ \big)$.\\
(b) The map \eqref{eq:7.11} and the naturality of \eqref{eq:7.2} give rise to a commutative diagram
\begin{equation}\label{eq:7.13}
\begin{array}{ccc}
H^1 (\mc E,Z(\mc G_\der) \to \mc G) & \leftarrow & H^1 (\mc E,Z(\mc G_\der) \to \mc L) \\
\downarrow & & \downarrow \\
\Irr ( Z(\overline{G}^\vee)^+ ) & \leftarrow & \Irr ( Z(\tilde{L}^\vee)^+ )
\end{array}.
\end{equation}
Use that the same data $(\psi,z)$ realizes $\mc G^z$ as inner twist of $\mc G$ and $\mc L^z$ as an
inner twist of $\mc L$.
\end{proof}

Consider a Langlands parameter $\phi : \mb W_F \times SL_2 (\C) \to {}^L G$. Let
$Z_{\overline{G}^\vee} (\phi)$ be the preimage of $Z_{G^\vee}(\phi)$ under ${}^L \overline{G}
\to {}^L G$. Notice that
\[
Z_{\overline{G}^\vee} (\phi) \cap Z(\overline{G}^\vee ) = Z(\overline{G}^\vee )^+ .
\]
In this context the appropriate component group of $\phi$ is 
\[
\mc S_\phi^+ := \pi_0 (Z_{\overline{G}^\vee} (\phi)) .
\]
Now an enhancement of $\phi$ is defined to be an irreducible representation $\rho^+$ of 
$\mc S_\phi^+$. The groups $\overline{G}^\vee$ and $G^\vee$ act naturally on the set of such
enhanced L-parameters $(\phi,\rho^+ )$, with the same orbits. The set of $G^\vee$-association
classes of such enhanced L-parameters depends only on ${}^L G$, so we denote it by $\Phi^+ ({}^L G)$.

Recall from \cite[\S 3]{Bor} and \cite[Definition 1.3]{ABPS} that $G$-relevance of a Langlands
parameter $\phi$ can be formulated in terms of parabolic subgroups of ${}^L G$ that contain
the image of $\phi$. A nice aspect of rigid inner twists is that they enable a canonical 
definition of $G$-relevance of $\phi$ in terms of enhancements. 

In view of the surjectivity of \eqref{eq:7.1}, we may assume without loss of generality that
$G$ is quasi-split. We abbreviate a rigid inner twist $(\mc G^z,\psi,z)$ of $\mc G$ to 
$(\mc G^z,z)$. By the compatibility of \eqref{eq:7.2} and \eqref{eq:7.3}, 
$\chi_z \big|_{Z ({G^\vee}_\Sc)^{\mb W_F}} = \chi_{\mc G^z}$.
We say that $(\phi,\rho^+) \in \Phi^+ ({}^L G)$, or just the enhancement $\rho^+$, is relevant
for $(\mc G^z,z)$ if the character of $Z(\overline{G}^\vee )^+$ determined by $\rho^+$ via
the natural map $Z(\overline{G}^\vee )^+ \to \mc S_\phi^+$ equals $\chi_z$. 

We denote the subset of $\Phi^+ ({}^L G)$ that is relevant for $(\mc G^z,z)$ by $\Phi^+ (G^z, z)$. 
By design every enhancement $\rho^+$ is relevant for exactly one rigid inner twist of $\mc G$ 
(up to equivalence). That yields a natural decomposition
\begin{equation}\label{eq:7.4}
\Phi^+ ({}^L G) = \bigsqcup\nolimits_{z \in H^1 (\mc E,Z(\mc G_\der) \to \mc G)} \Phi^+ (G^z, z) .
\end{equation}
For comparison, in Section \ref{sec:Lpar} we fixed an inner twist $\mc G$ of a quasi-split group
and we let $\chi_{\mc G}$ be the associated character of $Z( {G^\vee}_\Sc )^{\mb W_F}$. To define
relevance of enhancements there, we picked some extension $\chi_{\mc G}^e$ of $\chi_{\mc G}$ to
$Z ({G^\vee}_\Sc)$ and used that to pin down central characters of $\mc S_\phi$-representations.
That works fine, but the freedom in the choice of $\chi_{\mc G}^e$ means that it is not
entirely canonical when $Z({G^\vee}_\Sc )^{\mb W_F} \neq Z({G^\vee}_\Sc)$. In that case, if one
picks a set of representatives for the inner forms of $\mc G$, not all characters 
of $Z({G^\vee}_\Sc)$ will occur in the union of the $\Phi_e (G')$ over the selected $G'$. 
That prevents a decomposition like \eqref{eq:7.4} with $\mc S_\phi$ instead of $\mc S_\phi^+$,
in the most general case at least.

Let us compare the two kinds of relevant enhancements of $\phi$.  
For a group $S$ with a central subgroup $Z$ and $\chi \in \Irr (Z)$ we write
\[
\Irr (S,\chi) = \{ (\pi,V) \in \Irr (S) : \pi (z) = \chi (z) \, \mr{id}_V \; \forall z \in Z \} .
\]

\begin{lem} \textup{\cite[(4.6) and (4.7)]{Kal2}}
\label{lem:7.1} \\
Let $(\mc G^z,z)$ be a rigid inner twist of a quasi-split connected reductive $F$-group $\mc G$. 
Choose an extension $\chi_{\mc G^z}^e \in \Irr (Z({G^\vee}_\Sc))$ of $\chi_{\mc G^z} \in \Irr 
\big( Z({G^\vee}_\Sc )^{\mb W_F} \big)$ which agrees with $\chi_z$ on $Z({G^\vee}_\Sc)^+$.
\enuma{
\item There is a canonical group isomorphism
\[
\mc S_\phi \to \mc S_\phi^+ \underset{Z ({G^\vee}_\Sc)^+}{\times} Z({G^\vee}_\Sc) 
\]
\item When $\phi$ is $G^z$-relevant, part (a) induces a bijection
\[
\Irr \big( \mc S_\phi,\chi_{\mc G^z}^e \big) \longleftrightarrow \Irr \big( \mc S_\phi^+, \chi_z \big) .
\]
Here $\chi_{\mc G^z}^e$ is considered as a character on the image of $Z({G^\vee}_\Sc)$ in $\mc S_\phi$, 
and similarly for $\chi_z$. When $\phi$ is not $G^z$-relevant, these two sets are empty.
}
\end{lem}

As a consequence of Lemma \ref{lem:7.1}.b, there is a natural bijection
\begin{equation}\label{eq:7.9}
\Phi^+ (G^z,z) \to \Phi_e (G^z) ,
\end{equation}
whenever $\mc G^z, z$ and $\chi_{\mc G^z}^e$ are as in Lemma \ref{lem:7.1}. By Lemma \ref{lem:7.4}
the relevance condition is compatible with passage to Levi subgroups. Thus all aspects of 
the local Langlands correspondence discussed in this paper can be viewed equally well in terms
of rigid inner twists $(\mc G^z,z)$ and enhanced L-parameters $\Phi^+ (G^z,z)$.
The only topic that needs some further clarification is the cuspidal support map for enhanced 
L-parameters figuring in Theorem \ref{thm:1}.g.

\begin{lem}\label{lem:7.2}
The cuspidal support map for $\Phi_e (G^z)$ can also be defined for $\Phi^+ (G^z,z)$, retaining
all its properties.
\end{lem}
\begin{proof}
The construction of this map in \cite[\S 7]{AMS1} and Lemma \ref{lem:7.1} show us how 
to proceed:
\begin{itemize}
\item translate $(\phi,\rho^+) \in \Phi^+ (G^z,z)$ to $( \phi, \rho ) \in \Phi_e (G_z)$ with
Lemma \ref{lem:7.1},
\item compute its cuspidal support $(L, \phi_v, q\epsilon)$ as in \cite[Definition 7.7]{AMS1},
\item translate $q\epsilon$ to $q\epsilon^+ \in \Irr (\mc S_{\phi_v}^+)$, using Lemma \ref{lem:7.1} 
and $\chi_{\mc L,z}$,
\item define the cuspidal support of $(\phi,\rho^+)$ to be $(L,\phi_v,q\epsilon^+)$.
\end{itemize}
The composition of these steps preserves the $Z(\overline{G}^\vee)^+$-character $\chi_z$. Together 
with Lemma \ref{lem:7.4} that means that $(\phi_v,q\epsilon^+)$ is relevant for the Levi subgroup 
$L$ of $G^z$.
\end{proof}

We write $\Phi_\nr^+ (G^z,z) = \{ (\phi,\rho^+) \in \Phi^+ (G^z,z) : \phi \in \Phi_\nr (G^z) \}$
and define $\Phi^+_\nr ({}^L G) \subset \Phi^+ ({}^L G)$ similarly.

\begin{thm}\label{thm:7.3}
Let $\mc G$ be a quasi-split connected reductive $F$-group and let $(\mc G^z,z)$ be a rigid inner
twist of $\mc G$. There exists a bijection
\[
\begin{array}{ccc}
\Irr_\unip (G^z) & \longleftrightarrow & \Phi^+_\nr (G^z,z) \\
\pi & \mapsto & (\phi_\pi, \rho^+_\pi)
\end{array}
\]
with all the properties from Theorem \ref{thm:1}.

When we let $(\mc G^z,z)$ run over all rigid inner twists of $\mc G$ up to equivalence, 
we obtain a bijection
\[
\bigsqcup\nolimits_{z \in H^1 (\mc E,Z(\mc G_\der) \to \mc G)} \Irr_\unip (G^z) 
\longleftrightarrow \Phi_\nr^+ ({}^L G) .
\]
\end{thm}
\begin{proof}
The first bijection is the composition of Theorem \ref{thm:1} and \eqref{eq:7.9}. By Lemmas \ref{lem:7.4},
\ref{lem:7.1} and \ref{lem:7.2} it enjoys the same properties as in Theorem \ref{thm:1}.

Then the second bijection follows from \eqref{eq:7.4}.
\end{proof}

\end{document}